\theoremstyle{plain}
\newtheorem{thm}{Theorem}[section]
\newtheorem{cor}[thm]{Corollary}
\newtheorem{lem}[thm]{Lemma}
\newtheorem{prop}[thm]{Proposition}
\newtheorem*{conjecture}{Conjecture}
\theoremstyle{definition}
\newtheorem{defn}[thm]{Definition}
\newtheorem{rem}[thm]{Remark}
\newtheorem{example}[thm]{Example}
\newenvironment{tz}[1][]{\begin{aligned}\begin{tikzpicture}[#1]}{\end{tikzpicture}\end{aligned}}
\newcommand\ba{\begin{aligned}}
\newcommand\ea{\end{aligned}}
\newcommand{\ig}[1]{\begin{aligned}\includegraphics{#1}\end{aligned}}
\newcommand\XXX[1]{}
\newcommand\id{\operatorname{\text{\rm id}}}
\newcommand\be{\begin{equation}}
\newcommand\ee{\end{equation}}
\newcommand{\Tr}{\operatorname{\text{\rm Tr}}}
\newcommand{\Rep}{\operatorname{\text{\rm Rep}}}
\newcommand{\Hom}{\operatorname{\text{\rm Hom}}}
\newcommand{\Aut}{\operatorname{\text{\rm Aut}}}
\newcommand{\End}{\operatorname{\text{\rm End}}}
\newcommand{\comments}[1]{}
\def\clap#1{\hbox to 0pt{\hss#1\hss}}
\newcommand{\lag}[2]{ \xy {\ar@{->}^(0.45){\scriptscriptstyle #2}_(0.4){\scriptscriptstyle
#1} (6,0)*{}; (0,0)*{}};
\endxy}
\newcommand{\biglag}[2]{ \xy {\ar@{->}^(0.45){\scriptscriptstyle #2}_(0.4){\scriptscriptstyle
#1} (15,0)*{}; (0,0)*{}};
\endxy}
\newcommand{\nlag}[3]{{ \begin{xy}   (4,-0.7)*{\scriptscriptstyle #1}="d"; (0,-0.7)*{\scriptscriptstyle #2}="a"; (1.2,-0.7)*{\scriptscriptstyle <}  {\ar@{-}_{\scriptscriptstyle \; \; #3} "d"; "a"} \end{xy}}}
\newcommand{\cat}{\mathbf}
\newcommand{\Addresses}{{% additional braces for segregating \footnotesize
  \bigskip
  \footnotesize

  \textsc{Mathematical Institute, University of Oxford and Mathematics Division, \-Stellenbosch University}\par\nopagebreak
  \textit{E-mail address}, : \texttt{brucehbartlett@gmail.com}
}}
\begin{document}

% Title

\title{Fusion categories via string diagrams}

%  First author
%
\author{Bruce Bartlett} 

\date{}

\maketitle

\begin{abstract}    % type your abstract below
We use the string diagram calculus to give graphical proofs of the basic results of Etingof, Nikshych and Ostrik \cite{eno02-ofc} on fusion categories. These results include: the quadruple dual is canonically isomorphic to the identity, positivity of the paired dimensions, and Ocneanu rigidity. We introduce the pairing convention as a convenient graphical framework for working with fusion categories. We use this framework to express the pivotal operators as a product of the apex associator monodromy and the pivotal indicators. We also characterize pivotal structures as solutions of an explicit set of algebraic equations over the complex numbers. 
\end{abstract}

\section{Introduction}
In this paper we fix our ground field to be $\mathbb{C}$. A fusion category is a rigid semisimple linear monoidal category with finitely many isomorphism classes of simple objects and whose unit object is simple. A basic example of a fusion category is the category of representations of a finite group; in this way the study of fusion categories can be regarded as a common generalization of group theory and representation theory. For an overview on fusion categories, see \cite{muger2008tensor, calaque2004lectures}.

Many of the basic results on fusion categories were obtained by Etingof, Nikshych and Ostrik in \cite{eno02-ofc}. These results include the fact that the quadruple dual functor is canonically isomorphic to the the identity functor, positivity of the paired dimensions, and Ocneanu rigidity.

These results were proved in \cite{eno02-ofc} using the theory of weak Hopf algebras. The idea is that since every fusion category can be expressed as the category of finite-dimensional representations of some semisimple weak Hopf algebra \cite{eno02-ofc}, one may prove results about fusion categories by translating them into the language of weak Hopf algebras. One motivation for this paper, building on \cite{hh09-snbfcor3, k05-aes, muger2003-subfactors, bv13-ocfc}, was to give a unified account of how these results can be proved directly in the fusion category itself, using the graphical calculus of string diagrams \cite{js91-gtc, js-gtcii, s11-sgc}. 

Another motivation has been the string-net description for the vector spaces in the Turaev-Viro model of 3-dimensional topological quantum field theory \cite{kirillov2011-string-net}, where the graphical calculus plays a prominent role. 

However, the main motivation has been to investigate the conjecture of Etingof, Nikshych and Ostrik, that every fusion category admits a pivotal structure \cite{eno02-ofc}, as we now explain.

\subsection*{The pairing convention}
In the string diagram calculus, the objects $V$ of the fusion category $\cat{C}$ are used as labels for strands in the plane. The main feature is rigidity --- the notion that the strands are oriented, and that there are left and right dual structure maps, drawn as
\[
 \left( \ig{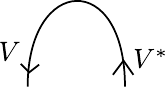}\, , \, \ig{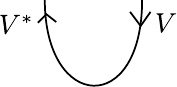} \right) \quad \text{and} \quad \left( \ig{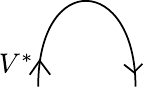} \, , \, \ig{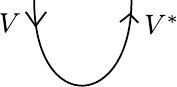} \right)
\]
respectively, satisfying `snake equations' such as
\[
\ba \ig{straight} \ea = \ba \ig{curvy} \ea.
\]
One can also form closed loops, which involves pairing left and right dual structure maps together:
\begin{equation} \label{loop_a}
\ba \ig{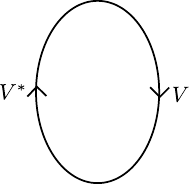} \ea \in \Hom(1,1) \,.
\end{equation}
In order to make this well-defined, it is usually imagined that one needs a {\em pivotal structure} on the fusion category, that is, a monoidal natural isomorphism $\gamma : \id \Rightarrow **$ from the identity functor on the fusion category to the double dual functor (see Section \ref{Explicit_equations}). Etingof, Nikshych and Ostrik have conjectured that such a structure always exists:

\begin{conjecture}[\cite{eno02-ofc}] Every fusion category admits a pivotal structure.
\end{conjecture}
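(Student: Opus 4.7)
The plan is to reduce the conjecture to the solvability of an explicit finite system of polynomial equations in nonzero complex scalars, and then attempt solvability using the structural inputs developed earlier: the canonical isomorphism $\id \cong ****$, positivity of paired dimensions, and the expression of the pivotal operator as a product of the apex associator monodromy and the pivotal indicators. Since every simple object $X_i$ has $\Aut(X_i) \cong \mathbb{C}^*$, any candidate pivotal structure is a tuple $(\lambda_i) \in (\mathbb{C}^*)^I$ indexed by the (finite) set $I$ of isomorphism classes of simples. The requirement that $\gamma \colon \id \Iso **$ square to the canonical $\id \cong ****$ forces $\lambda_i^2 = c_i$ for explicit scalars $c_i$, while monoidality of $\gamma$ reads $\lambda_i \lambda_j = \mu_{ij}^k \lambda_k$ for every fusion channel $(i,j;k)$ with $N_{ij}^k \neq 0$, where $\mu_{ij}^k$ is computable from the associator and duality data via the pairing convention.

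First I would use the announced graphical factorization of the pivotal operator to write down the $c_i$ and $\mu_{ij}^k$ unambiguously, so that the conjecture becomes a concrete nonemptiness question for an affine subvariety of $(\mathbb{C}^*)^I$. Next I would fix a square root $\sqrt{c_i}$ of each $c_i$ and set $\lambda_i = \epsilon_i \sqrt{c_i}$ with $\epsilon_i = \pm 1$; the monoidality equations then become a sign-valued cocycle-style condition on the fusion graph, and the conjecture reduces to finding an assignment of signs $\epsilon_i$ that solves it. The hope is to force solvability by combining (a) positivity of the paired dimensions, which constrains the absolute values of the $c_i$, with (b) the apex-associator monodromy identities, which impose relations among the $\mu_{ij}^k$ beyond those visible from the fusion rules alone.

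The hard part---and the reason this conjecture has stood open---is that (a) and (b) do not obviously generate enough relations to annihilate every possible cocycle obstruction, so an additional input appears necessary: for instance a dimension count showing that the affine subvariety cut out in $(\mathbb{C}^*)^I$ has the expected dimension and is nonempty by general position, or a rigidity/deformation argument reducing to fusion categories already known to be pivotal (those coming from finite groups, quantum groups at roots of unity, or unitary ones). In the absence of such an input, the realistic deliverable---and what the abstract actually promises---is to translate the conjecture into a fully explicit algebraic question over $\mathbb{C}$, leaving its resolution to future work.
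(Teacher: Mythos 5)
This is a conjecture attributed to Etingof, Nikshych, and Ostrik, and the paper does not prove it; you correctly recognize this in your closing paragraph, so there is no logical error, but it is worth saying plainly that the paper's contribution is to \emph{reformulate} the conjecture as an explicit algebraic question (Theorem~\ref{piv_structure_thm}), not to settle it. Your proposed reduction is essentially the paper's: a pivotal structure is a tuple $\gamma_i \in U(1)$ indexed by the simple objects satisfying $\gamma_j \gamma_k \, \id = \gamma_i T^i_{jk}$ whenever $X_i$ is a summand of $X_j \otimes X_k$, and the operators $T^i_{jk}$ are computed explicitly in Theorem~\ref{formula_piv} as $T^i_{jk} = p_i p_j p_k \, Y^{-1} A_{i^* j k} Y$ --- the pivotal indicators times a conjugate of the apex-associator monodromy --- which is exactly the ``graphical factorization'' you invoke. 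Two points of detail diverge from the paper. First, your equation $\lambda_i \lambda_j = \mu^k_{ij} \lambda_k$ with a \emph{scalar} $\mu^k_{ij}$ implicitly assumes each $T^i_{jk}$ acts as a scalar, i.e.\ that $\cat{C}$ is ``orientable'' in the paper's sense; if this fails, $\cat{C}$ admits no pivotal structure at all, and the paper's formulation handles this correctly by leaving $T^i_{jk}$ as an operator equation. Second, your sign-cocycle reduction (fix $\sqrt{c_i}$ and vary $\epsilon_i = \pm 1$) is really the \emph{spherical} sub-case: the paper shows that $\gamma$ is spherical precisely when all $\gamma_i = \pm 1$, whereas a general pivotal structure allows phases $\gamma_i$ constrained only by $\gamma_1 = 1$ and $\gamma_i = \overline{\gamma_{i^*}}$. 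As for solvability: you are right that positivity of the paired dimensions, the identity $\mathcal{T}^2 = \id$, and the monodromy factorization do not visibly generate enough relations to annihilate every obstruction, and the paper makes no claim that they do --- the conjecture remains open.
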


The main idea we use in this article is that, in fact, there is a well-defined way to make sense of diagrams which employ both left and right dual structure maps together, such as \eqref{loop_a}, {\em even without} a pivotal structure. The rule (at least for simple objects $V$) is that if left and right dual structure maps appear in a diagram, then they cannot be chosen independently but must be correlated in the following way: when paired together, they must give the {\em fusion dimension} of $V$. The fusion dimension of $V$ is a certain positive number canonically associated to $V$ in a fusion category which we define in Section \ref{fusion_frobenius}; it does not make use of a pivotal structure. We call this the pairing convention, and it allows us to unambiguously evaluate diagrams such as
\[
\ig{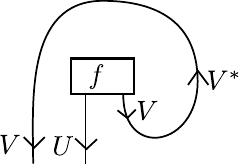}
\]
in a `bare' fusion category, which would otherwise not be well-defined. 

\subsection*{Pivotal operators}

Using the pairing convention, we can give a simple diagrammatic definition of the {\em pivotal operators} of a fusion category, which are certain canonically defined linear maps
\[
 T^A_{BC} : \Hom(A, B \otimes C) \rightarrow \Hom(A, B \otimes C)
\]
associated to every triple of objects $A$,$B$ and $C$ in a fusion category. One advantage of our framework is that these maps are canonical, not depending on any choices of duals for their definition. The pivotal operators play the role of the double dual functor, and the string diagram calculus then gives a simple proof that $T^A_{BC}$ squares to the identity, see Theorem \ref{piv_involution_thm}. The idea of this graphical proof is originally due to Hagge and Hong \cite{hh09-snbfcor3}.

\subsection*{Main results}
Our main results are as follows. Firstly, we show that every fusion category $\cat{C}$ over $\mathbb{C}$ comes equipped with a canonical monoidal endofunctor $\mathcal{T}$, whose underlying functor is the identity, whose action on the hom-sets $\Hom(X_i, X_j \otimes X_k)$ between tensor products of simple objects is given graphically by the pivotal operators, and which squares to the identity (Theorem \ref{piv_operator_C}). 

Secondly, we use this viewpoint to give graphical proofs of the following results from \cite{eno02-ofc}: 
\begin{itemize}
\item the quadruple dual is monoidally naturally isomorphic to the identity (Corollary \eqref{explicit_eno_cor})
\item positivity of the paired dimensions (Theorem \ref{realpositive}), 
\item a pseudo-unitary fusion category is spherical (Corollary \ref{pseudo_cor})
\item the sphericalization of a fusion category is spherical (Corollary \ref{spher_cor})
\item Ocneanu rigidity, i.e. the vanishing of the Davydov-Yetter cohomology (Theorem \ref{ocneanu})
\end{itemize}
Thirdly, in Theorem \ref{formula_piv} we explicitly compute the pivotal operators in terms of the associators of the fusion category as a product of the {\em pivotal indicators}, which we introduce in Section \ref{piv_ind_sec}, with the {\em apex associator monodromy}, which we introduce in Section \ref{Form_piv_op}. 

Finally, we characterize pivotal structures as solutions of an explicit set of equations over the complex numbers (Theorem \ref{piv_structure_thm}), refining a formula of Wang \cite[Prop 4.17]{wang2010-topological}.

\subsection*{Outline of paper}

In Section \ref{string_sec} we give our conventions on string diagrams. In Section \ref{stss} we define the pairing convention and the pivotal operators, show that the paired dimensions are positive, and show how to interpret the pivotal operators as a monoidal functor. In Section \ref{explicit_sec} we compute the pivotal operators explicitly in terms of the associators. In Section \ref{piv_struc_sec} we express pivotal structures as solutions of certain explicit algebraic equations, and formulate the sphericalization of a fusion category from our approach. Finally in Section \ref{ocneanu_sec} we give a graphical proof of Ocneanu rigidity.

\section{String diagrams} \label{string_sec}
In this section we explain our conventions regarding the string diagram calculus.

\subsection{Conventions for general monoidal categories} \label{string_conventions}
The string diagram calculus for a monoidal category $\cat{C}$ is well-known (see \cite{s11-sgc} or \cite[Chap. 4]{BBthesis} for an overview). Each diagram refers to a certain morphism in $\cat{C}$. Our diagrams go from top to bottom, so that a morphism $f : A \rightarrow B \otimes C$ is drawn as
\[
\ig{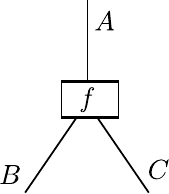} \, .
\] 
In these pictures there is no explicit parenthesis scheme on the input and output tensor products. Thus it is often supposed that one needs a a strict monoidal category in order to make this calculus well-defined. In fact, this is not so --- the calculus makes perfect sense when reasoning about {\em equations} between morphisms, which is all we will ever need. For instance, suppose $f : A \rightarrow A' \otimes E$, $g : B \otimes C \rightarrow D \otimes C'$, $h: E \otimes D \rightarrow B'$, $k : A \otimes (B \otimes C) \rightarrow F \otimes C'$, $l: F \rightarrow A' \otimes B'$, and consider the following equation:
\[
\ig{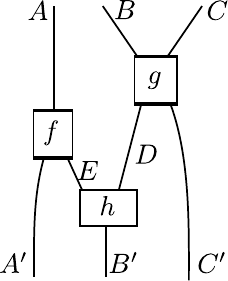} = \ig{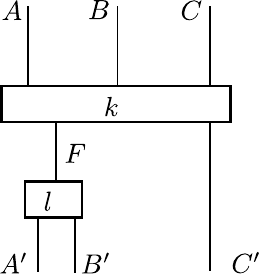}
\]
How are we to interpret this? This equation should be regarded as an infinite number of equations, one for each fixed parenthesis scheme (this may include insertions of the unit object $1$ of the monoidal category) of the source and target objects, which are taken to be the same on both sides of the equation. For instance, $((1 \otimes A) \otimes (B \otimes 1)) \otimes C$ is a possible choice of parenthesis scheme for the source object in the above equation. For each such choice of parenthesis schemes, one should insert an appropriate sequence of associators and unit isomorphisms in order to make each side well-typed. Coherence for monoidal categories \cite{ml97-cwm} guarantees that the evaluation of the resultant morphism is independent of this choice. Each equation implies all the others (see \cite[Chap. 4]{BBthesis}). 

A monoidal category $\cat{C}$ is called {\em right rigid} if for each object $V$ there exists an object $V^*$ which can be equipped with unit and counit maps $\eta : 1 \rightarrow V^* \otimes V$, $\epsilon : V \otimes V^* \rightarrow 1$ such that the rigidity equations hold:
\begin{equation} \label{snake_eqns}
\ig{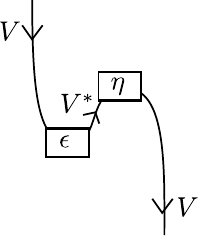} = \ig{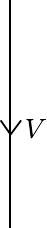} \quad , \quad \ig{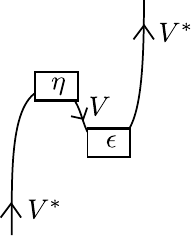} = \ig{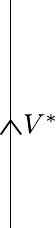}
\end{equation}
Note that we have added orientations to the strands in \eqref{snake_eqns}, this is simply a visual aid. The monoidal category $\cat{C}$ is called {\em left rigid} if for each object $V$ there exists an object ${}^* V$ which can be equipped with unit and counit maps $n : 1 \rightarrow V \otimes {}^* V$, $e : {}^* V \otimes V \rightarrow 1$ satisfying the horizontally-reversed versions of \eqref{snake_eqns}. We say that $\cat{C}$ is {\em rigid} if it is both right rigid and left rigid. Our approach is to avoid making fixed initial choices of duals for each object, to keep constructions as canonical as possible (thus we do not refer to a right dual functor $* : \cat{C} \rightarrow \cat{C}^\text{op}$). 

Formally, the invariance of the evaluation of string diagrams under diffeomorphisms of the plane can be expressed as follows. We imagine each diagram as living in an outer disk $D$, with strands labelled by objects of the category, and coupons $C_i$ (drawn as discs or square regions) labelled by morphisms, which go from the tensor product of the objects labelling the strands in the northern hemisphere of the coupon to the tensor product of the objects labelling the strands in the southern hemisphere of the coupon. We define a {\em rigid isotopy} of such a diagram to be a smooth path of diffeomorphisms $\phi_t : D \rightarrow D$, $t \in [0,1]$ such that $\phi_0$ is the identity, $\phi_t$ is the identity on the boundary of $D$ for all $t$ and such that $\phi_t$ restricted to each coupon $C_i$ is a translation for all $t$.

The following theorem is well-known and combines coherence for monoidal categories, rigidity, and the interchange law.
\begin{thm}[{see eg. \cite{s11-sgc}}] \label{invariance_rigid}
The evaluation of a string diagram taking labels in a rigid monoidal category is invariant under diffeomorphisms which are rigid isotopic to the identity. 
\end{thm}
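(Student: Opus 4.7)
The plan is to build the proof in three layers, corresponding exactly to the three ingredients the author names: coherence, rigidity, and interchange. The first task is to make the \emph{evaluation} of a string diagram a precise algebraic object at all, and only then argue invariance.

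\textbf{Step 1: Define evaluation via a generic height function.} I would pick a smooth height function $h : D \to \mathbb{R}$ which is generic with respect to the diagram, so that each horizontal line $h = t$ meets the diagram transversely and contains at most one singular feature (a coupon, a local maximum of a strand, or a local minimum of a strand). Slicing between consecutive critical values produces a finite vertical sequence of elementary slabs. Each slab is assembled as a tensor product of identity strands with exactly one of: a coupon morphism, a unit $\eta$ or $n$, or a counit $\epsilon$ or $e$. Composing these slabs top-to-bottom, after inserting some fixed but arbitrary parenthesization on the sequence of strand labels between slabs, yields a morphism in $\cat{C}$. One then invokes Mac Lane coherence to conclude that any two parenthesization choices yield the same morphism up to a unique associator/unitor-string, so the value is independent of the parenthesization.

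\textbf{Step 2: Independence of the height function.} Any two generic height functions on the same diagram can be connected by a one-parameter family of functions whose non-generic members exhibit only codimension-one singularities: either two critical features lie at exactly the same height, or a coupon edge becomes horizontal. Crossing such a wall replaces the evaluation by applying either (a) the interchange law $(f \otimes \id)(\id \otimes g) = (\id \otimes g)(f \otimes \id) = f \otimes g$, when two disjoint features swap heights, or (b) a naturality move justified by coherence, when a coupon's horizontal position shifts relative to a cup/cap. Hence the evaluation is independent of the chosen height function, and we may speak of \emph{the} evaluation $\llbracket \Gamma \rrbracket$ of a diagram $\Gamma$.

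\textbf{Step 3: Invariance under rigid isotopy.} Given a rigid isotopy $\phi_t$, I would track a generic height function along the isotopy and apply Step 2 in families. The moments where the height data are non-generic fall into finitely many elementary types: height swaps (handled by interchange), creation or annihilation of a cup--cap pair on a single strand (handled by the snake equations \eqref{snake_eqns}), and re-parenthesization of intermediate objects (handled by coherence). Because the coupons themselves move only by translation throughout the isotopy, no rotation of a coupon is ever required and we need no further structure (such as a pivotal or balanced one) on $\cat{C}$. Piecing together the finite list of elementary moves along $\phi_t$ shows $\llbracket \Gamma \rrbracket = \llbracket \phi_1(\Gamma) \rrbracket$.

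The main obstacle is the bookkeeping in Step 2--3: one must verify that every codimension-one singularity of the height function along an isotopy really does fall into one of the listed elementary cases and corresponds to an algebraic identity we have in a rigid monoidal category. This is essentially a Morse-theoretic (or Cerf-theoretic) statement about generic one-parameter families of height functions on planar diagrams, and it is the technical heart of the result. Once that classification is in hand, each case is a short algebraic check using exactly one of coherence, interchange, or the snake equations, and the theorem follows. Since these arguments are standard in the graphical calculus literature, I would cite \cite{s11-sgc} for the detailed case analysis and present the above as a sketch.
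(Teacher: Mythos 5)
The paper states this theorem as a known result (citing Selinger's survey) and gives no proof of its own, so there is nothing to compare your argument against line by line. That said, your sketch follows the standard Joyal--Street / Cerf-theoretic strategy that the cited reference itself uses, and the architecture is right: fix a generic height function to slice the diagram into elementary slabs, use coherence to suppress the parenthesization ambiguity, then classify the codimension-one degeneracies of the height data along the isotopy and check that each one corresponds to an identity available in a rigid monoidal category. You also correctly isolate why the ``rigid isotopy'' restriction (coupons translate, never rotate) is what makes plain rigidity suffice without any pivotal or balanced structure.

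Two small imprecisions are worth flagging. In Step~2, the list of codimension-one walls for a \emph{fixed} diagram under a varying height function is not complete as stated: besides two features swapping heights (interchange), one can also have a birth or death of a maximum--minimum pair on a single strand as the gradient direction of $h$ rotates past a tangent direction of that strand, so the snake equations already enter at Step~2, not only at Step~3. Conversely, your item (b) in Step~2 (``a coupon's horizontal position shifts relative to a cup/cap'') does not describe a genuine codimension-one wall for a fixed diagram; that phenomenon belongs to the isotopy of Step~3 and is in any case subsumed by interchange or snake moves. Finally, the real technical content --- that generic one-parameter families of (diagram, height function) pairs exhibit only your short list of elementary singularities --- is asserted rather than proved; you rightly name this the heart of the result and defer to the literature for the Cerf-theoretic case analysis, which is a defensible choice here since the paper does exactly the same.
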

\begin{rem}
In a companion paper, motivated by \cite{kirillov2011-string-net}, we will express the diffeomorphism invariance for string diagrams taking labels in a {\em fusion category} in the language of spin structures. \label{spin_remark}
\end{rem}

\subsection{Conventions for fusion categories}
A fusion category is a rigid semisimple linear monoidal category with finitely many isomorphism classes of simple objects and whose unit object is simple \cite{e06-2vect}. In a fusion category, the associators feature explicitly in the graphical calculus as follows. Suppose we fix a representative set of simple objects $X_i$, $i \in I$, and choose trivalent bases
\begin{equation} \label{basis_choice}
 e_\alpha : X_i \rightarrow X_j \otimes X_k  \quad \text{ drawn as } \ig{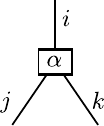}
 \end{equation}
for each hom-set $\Hom(X_i, X_j \otimes X_k)$. (We will always choose the canonical basis elements for the 1-dimensional hom-spaces $\Hom(1, 1 \otimes 1)$, $\Hom(X_i, 1 \otimes X_i)$ and $\Hom(X_i, X_i \otimes 1)$). Then we can form two different bases for $\Hom(X_i, X_j \otimes (X_k \otimes X_l))$ and so we have a change of basis transformation
\begin{equation} \label{associator_eqn}
\ig{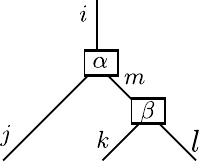} = \sum_{\gamma, n, \delta} (F^i_{jkl})^{\gamma n \delta}_{\alpha m \beta} \ig{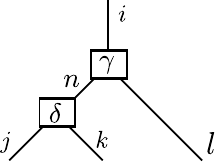} \, .
\end{equation}
Note that according to our diagram conventions, in the parenthesis scheme $X_j \otimes (X_k \otimes X_l)$ for the output objects, the left-hand side diagram of \eqref{associator_eqn} evaluates as
\[
X_i \stackrel{e_\alpha}{\longrightarrow} X_j \otimes X_m \stackrel{\id \otimes e_\beta}{\longrightarrow} X_j \otimes (X_k \otimes X_l )
\]
while the right-hand side diagram evaluates as
\[
X_i \stackrel{e_\gamma}{\longrightarrow} X_n \otimes X_l \stackrel{e_\delta \otimes \id}{\longrightarrow} (X_j \otimes X_k) \otimes X_l \stackrel{a_{X_j, X_k, X_l}}{\longrightarrow} X_j \otimes (X_k \otimes X_l) 
\]
where $a$ is the associator of the fusion category. We call the scalars $(F^i_{jkl})^{\gamma n \delta}_{\alpha m \beta}$ the {\em associator matrix elements}. 

Given a trivalent basis choice \eqref{basis_choice}, we define the {\em dual basis} $\hat{e}_\alpha : X_j \otimes X_k \rightarrow X_i$ as the one satisfying $\hat{e}_\beta \circ e_\alpha = \delta_{\alpha \beta} \id_{X_i}$. Graphically, this is drawn as:
\begin{equation} \label{dual_basis}
\left\{ \ig{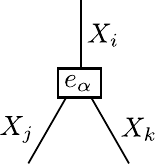} \right\} \text{ and } \left\{ \ig{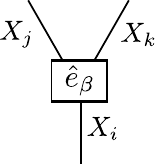} \right\} \text{ satisfying } \ig{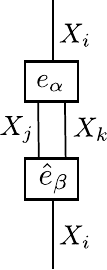} = \delta_{\alpha \beta} \ig{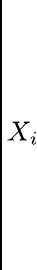} \,.
\end{equation}
We will often write both $e^\alpha$ and $\hat{e}_\beta$ simply as $\alpha$ and $\hat{\beta}$ respectively; also we often write $i$ instead of $X_i$. Basis vectors satisfying \eqref{dual_basis} provide a resolution of the identity, 
\begin{equation} \label{res_id}
\sum_{i, \alpha} \ig{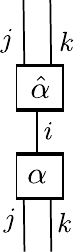} = \ig{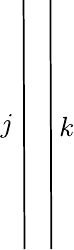} \, .
\end{equation}

\begin{example} The Yang-Lee category (see eg. \cite{wang2010-topological}) has two simple objects $1$ (drawn as a dotted line) and $\tau$ (drawn as a solid line) with $\tau^2 = 1 + \tau$. So, the only nontrivial trivalent basis elements are
\[
 \ig{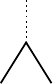} \quad \text{and} \quad \ig{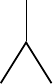}.
\]
The associators are
\begin{align}
\ig{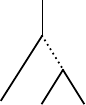} & = a \ig{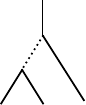} + \ig{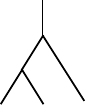} \label{yl1}  \\
\ig{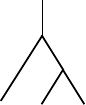} & = a \ig{d86.pdf} - a \ig{d87.pdf}
\end{align}
and
\begin{equation} \label{yl2}
 \ig{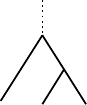} = \ig{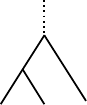}
\end{equation}
where $a = -\frac{1}{2} (1 + \sqrt{5})$. 
\end{example}
\begin{example} The category $\mathcal{E}$ associated with the even part of the $E_6$ subfactor \cite{hh09-snbfcor3, ostrik2013pivotal} has 3 simple objects: $1$ (drawn as a dotted line), $x$ (drawn as a solid line), $y$ (drawn as a wiggly line) and fusion rules $xy = yx =x$, $xx = 1 + 2x + y$, $yy=1$. So, the nontrivial trivalent basis elements are:
\[
\ig{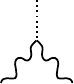}, \ig{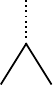}, \ig{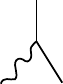}, \ig{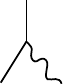} , \ig{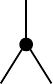}, \ig{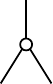} \, .
\]
Some relevant associators are
\begin{equation} \label{cate1}
\ig{d84.pdf} = \frac{1}{d} \left( \ig{d86.pdf} + \ig{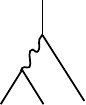} \right) + \frac{1}{\sqrt{2} v} \left( \ig{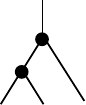} + \ig{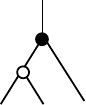} + \ig{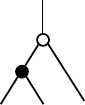} - \ig{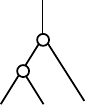} \right) 
\end{equation}
and:
\begin{align}
\ig{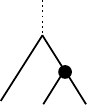} & = \frac{1}{\sqrt{2}} e^{\frac{-7 \pi i}{12}} \left( \ig{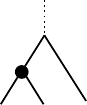} + \ig{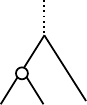}  \right) \label{cate2} \\
\ig{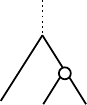} & = \frac{1}{\sqrt{2}} e^{\frac{-7 \pi i}{12}} \left( -i \ig{d102.pdf} + i \ig{d103.pdf} \right) \label{cate3}
\end{align}
Here $d = 1 + \sqrt{3}$ and $v = \sqrt{d}$. See \cite{hong2009onsymmetrization} for a full list. Note that we are using the inverses of the matrices in \cite{hong2009onsymmetrization} due to our conventions.
\end{example}

\section{The pivotal operators on a fusion category\label{stss}}
In this section we define the pairing convention and the pivotal operators on a fusion category. We prove that the paired dimensions are positive, and we compute the pivotal operators in terms of the apex associator monodromy and the pivotal indicators. We show that the pivotal operators are monoidal and identify them with the double dual functor.

\subsection{Fusion and Frobenius-Peron dimensions} \label{fusion_frobenius}

We recall the following from \cite{eno02-ofc}. In a fusion  category $\cat{C}$ over $\mathbb{C}$, every simple object $X_i$ has two canonical dimensions, which are positive real numbers: its {\em Frobenius-Perron} dimension $d^+_i$ and its {\em fusion} dimension $d_i$. 

\begin{defn}[see \cite{eno02-ofc}] The {\em Frobenius-Perron dimensions} $d^+_j$ of the simple objects $X_j$ are the unique positive real numbers satisfying
\begin{equation}
 d^+_j d^+_k = \sum_i N^i_{jk} d^+_i  \label{FPeqn}
\end{equation}
where $N^i_{jk} = \dim \Hom(X_i, X_j \otimes X_k)$. That is, they furnish the unique homomorphism from the Grothendieck ring of $\cat{C}$ to $\mathbb{C}$ taking positive real values on the simple objects $X_i$. 
\end{defn}
Note that, since taking duals is a ring anti-homomorphism from the Grothendieck ring of $\cat{C}$ to itself, it follows from uniqueness of the Frobenius-Perron dimensions that $d^+_j = d^+_{j^*}$ for all $j \in I$.

To define the fusion dimensions, we first need to define paired dimensions\footnote{The notion of a paired dimension is due to M\"{u}ger \cite[Prop 2.4]{m03-sct1}, where it is written as $d^2(X)$, but not named explicitly. The terminology and notation is our own.}.

\begin{defn} Let $X_i$ and $X_i^*$ be dual simple objects in $\cat{C}$. Their {\em paired dimension}   is
\begin{equation} \label{pair1}
  d_{\{i, i^*\}} = \ba \ig{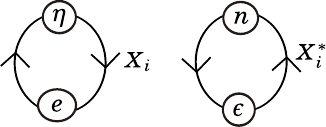} \ea,
 \end{equation}
where $(\eta, \epsilon)$ is some choice of unit and counit maps exhibiting $X_i^*$ as a right dual of $X_i$, and $(n, e)$ is some choice of unit and counit maps exhibiting $X_i^*$ as a {\em left} dual of $X_i$. 
\end{defn}
Observe that the product \eqref{pair1} is independent of the choices of unit and counit maps made in the definition because it is invariant under rescaling
 \[
 \eta \mapsto \lambda \eta, \epsilon \mapsto \frac{1}{\lambda} \epsilon, n \mapsto \mu n, e \mapsto \frac{1}{\mu} e.
\]
Note that the paired dimensions are certainly nonzero complex numbers. Indeed, $e \circ \eta$ is zero if
and only if one of $e$ or $\eta$ is zero
(by semisimplicity), which would contradict the duality equations \eqref{snake_eqns}.
Similarly for $\epsilon \circ n$. In Section \ref{piv_ind_sec} we will show how to compute the paired dimensions explicitly in terms of the associator matrix elements.

In fact, we will show in Theorem \ref{realpositive} that the paired dimensions are always positive real numbers. Anticipating this, we make the following definition.
\begin{defn}\label{fusion_dim}The {\em fusion dimension} $d_i$ of a simple object $X_i$ in a fusion category over $\mathbb{C}$ is the positive square root of its paired dimension $d_{\{i, i^*\}}$.
\end{defn}
In Corollary \ref{paired_cor}, we show how to read off the fusion dimensions directly from the associators.
\begin{example} In the Yang-Lee category, $d^+_\tau = \frac{1}{2}(1 + \sqrt{5})$ and we read off from \eqref{yl1} that $d_\tau = \frac{1}{2}(-1 + \sqrt{5})$. In category $\mathcal{E}$, $d^+_x = 1 + \sqrt{3}$ and $d^+_y = 1$, while we read off from \eqref{cate1} that $d_x = d^+_x$; similarly $d_y = d^+_y = 1$.
\end{example}
The {\em global dimension} $D_\cat{C}$ of a fusion category is the sum of its paired dimensions, $D_\cat{C} = \sum_{i \in I} d_{\{i, i^*\}}$. The {\em Frobenius-Perron dimension} $FP_\cat{C}$ of a fusion category is the sum of the squares of the Frobenius-Perron dimensions, $FP_\cat{C} = \sum_{i \in I} (d^+_i)^2$. The fusion category is called {\em pseudo-unitary} if $FP_\cat{C} = D_\cat{C}$. 

\subsection{The pairing convention} \label{pairing_sec}

\begin{defn} A {\em root choice} on a fusion category is a symmetric choice $\{d_i\}$ of square roots of the paired dimensions; that is, one which satisfies
 \[
  d_i^2 = d_{\{i,i^*\}} \text{ and } d_i = d_{i^*} \text{ for all $i \in I$.}
 \]
\end{defn}
The utility of a root choice is that it allows the following  diagrammatic convention.
\vskip 0.3cm
\framebox{\parbox[b]{11cm}{\textbf{Pairing convention.} Whenever a unit $\ba \ig{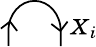} \ea$ which is part of a unit-counit pair expressing a simple object $X_i^*$ as a {\em right} dual of $X_i$ appears together in some equation with a counit $\ba \ig{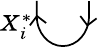} \ea$ which is part of a unit-counit pair expressing $X_i^*$ as a {\em left} dual of $X_i$, it will always be understood that the former is arbitrary while the latter is determined uniquely by the requirement that
 \[
  \ba \ig{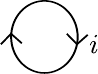} \ea = d_i 
 \]
Similarly (and equivalently), whenever $\ba \ig{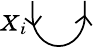} \ea$ appears in a diagram together with $\ba \ig{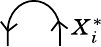} \ea$, the latter will always be fixed uniquely by the requirement that 
\[
\ba \ig{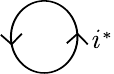} \ea = d_i.
\]}}
\vskip 0.3cm
The pairing convention is the key diagrammatic idea in this paper. Using this convention, we only need to fix a root choice (and not a pivotal structure, which is not known to exist in general) on the fusion category in order to unambiguously perform string diagram calculations where both left and right duals appear. Note that the pairing convention extends uniquely to the whole category and not just the simple objects, see Section \ref{piv_monoidal}.

\begin{rem}
We will presently show in Theorem \ref{realpositive} that the paired dimensions $d_{\{i, i^*\}}$ are positive real numbers. From then on, we will always work with the {\em canonical} root choice on a fusion category given by the positive square roots.
\end{rem}

Although we will not need it in this paper, we can extend the pairing convention to {\em all} objects in $\cat{C}$, not just simple ones. Namely, suppose $V \in \cat{C}$. For each representative simple object $X_i$, choose a basis $e_{i, \alpha} : X_i \rightarrow V$ for $\Hom(X_i, V)$, with corresponding dual basis $\hat{e}_{i, \alpha} : V \rightarrow X_i$. If, in a diagram, a unit
\[
\ig{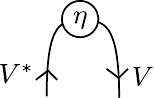} 
\] 
which is part of a unit-counit pair $(\eta, \epsilon)$ expressing $V^*$ as a {\em right} dual of $V$ appears together with a counit
\[
\ig{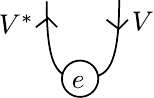}
\]
which is part of a unit-counit pair $(n, e)$ expressing $V^*$ as a {\em left} dual of $V$, it will be understood that the former is arbitrary while the latter is defined as
\[
\ba \ig{h15.pdf} \ea := \sum_{i, \alpha} \ba \ig{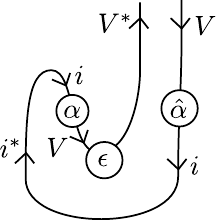} \ea \, .
\]
Note that this definition is independent of the choice of basis $e_{i, \alpha}$. 

\subsubsection{Further graphical conventions}

Fix a root choice $\{d_i\}$, $i \in I$. We introduce the following notation. For an endomorphism of a simple object $f : X_i \rightarrow X_i$, we will write $\langle f \rangle \in \mathbb{C}$ for the scalar satisfying $f = \langle f \rangle \, \id_{X_i}$. In string diagrams, 
\begin{equation} \label{extract_scalar}
 \ba \ig{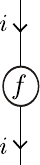} \ea \, = \, \langle f \rangle \, \ba \ig{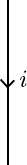} \ea  \, .
\end{equation}
We can extract the scalar $\langle f \rangle$ by pre- and post-composing both sides with the cup and cap maps, using the pairing convention with respect to the root choice. This gives
\begin{equation} \label{loop_scalar}
 \langle f \rangle = \frac{1}{d_i} \, \ba \ig{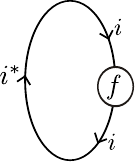} \ea \, .
\end{equation}
Note that since $d_{i} = d_{i^*}$, it doesn't matter in which direction one closes the loop. 

Similarly, given a morphism $f : X_i \rightarrow X_j \otimes X_k$, we can insert the identity on $X_j \otimes X_k$ in the form of \eqref{res_id}, and close the loop as above, to expand $f$ in a basis $e_\beta : X_i \rightarrow X_k \otimes X_k$ as follows:
\begin{equation} \label{expand_loop}
\ba \ig{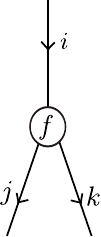} \ea \, = \frac{1}{d_i} \sum_\beta \ba \ig{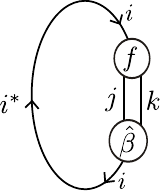} \ea \, \ba \ig{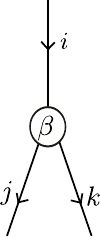} \ea
\end{equation}

\subsection{The pivotal operators} \label{piv_op_sec}
Using the pairing convention, we can diagrammatically define involutions on the fusion vector spaces.
\begin{defn} Let $\{d_i\}$ be a root choice on $\cat{C}$. The {\em pivotal operators} 
\[
T^i_{jk} \colon \Hom(X_i, X_j \otimes X_k) \rightarrow \Hom(X_i, X_j \otimes X_k)
\] 
are defined by
 \begin{equation} \label{defn_piv_operator}
  \ba \ig{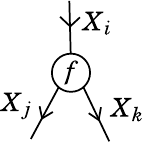} \ea \mapsto \ba \ig{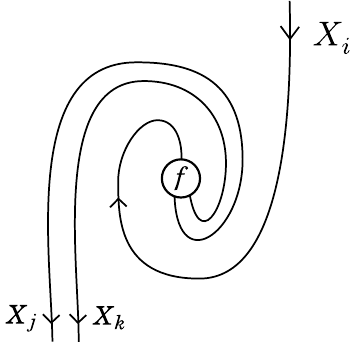} \ea.
 \end{equation}
where the pairing convention has been used.
\end{defn}
\noindent That is, to evaluate the right hand side of \eqref{defn_piv_operator}, make an arbitrary choice of right duals $(X_i^*, \eta_i, \epsilon_i)$, $(X_j^*, \eta_j, \eta_j)$ and $(X_k^*, \eta_k, \epsilon_k)$ for $X_i$, $X_j$ and $X_k$, and then choose the left dual structure maps using the pairing convention. 

Note that changing the root choice by setting $d_i \mapsto x_i d_i$ for some signs $x_i = \pm 1$ with $x_i = x_{i^*}$ will change the sign of the pivotal operators according to $T^i_{jk} \mapsto x_ix_jx_k T^i_{jk}$.

\begin{lem} \label{mat_elements} In an arbitrary trivalent basis $\{e_\alpha \colon X_i \rightarrow X_j \otimes X_k\}$ with corresponding dual basis $\{\hat{e}_\beta \colon X_j \otimes X_k \rightarrow X_i\}$ in the sense of \eqref{dual_basis}, the matrix elements of $T^i_{jk}$ compute as
 \[
 \langle \hat{e}_\beta T^i_{jk} (e_\alpha) \rangle = \frac{1}{d_i} \, \ba \ig{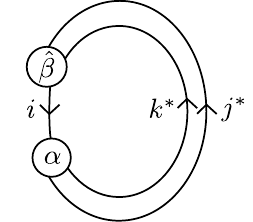} \ea \, .
 \]
\end{lem}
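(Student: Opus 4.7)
The plan is simply to unpack the definitions and apply the scalar-extraction formula \eqref{loop_scalar}. Fix bases $\{e_\alpha\}$ and dual bases $\{\hat{e}_\beta\}$ as in the statement. The composite $\hat{e}_\beta \circ T^i_{jk}(e_\alpha)$ is an endomorphism of the simple object $X_i$, so it is a scalar multiple of the identity, and the coefficient $\langle \hat{e}_\beta T^i_{jk}(e_\alpha)\rangle$ is exactly what we want to compute. By equation \eqref{loop_scalar}, this scalar is recovered (up to the prefactor $1/d_i$) by closing the resulting strand into a loop using the cap and cup maps, interpreted through the pairing convention with respect to the root choice $\{d_i\}$.

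The next step is to substitute the diagrammatic definition of $T^i_{jk}(e_\alpha)$ from \eqref{defn_piv_operator} into this closed picture. The result is a single diagram containing $e_\alpha$ and $\hat{e}_\beta$ together with the loops used in the definition of $T^i_{jk}$, all closed into a scalar by the outer cap and cup. Theorem \ref{invariance_rigid} lets us freely rearrange the strands up to rigid isotopy, so (regardless of the particular parenthesis schemes chosen to typecheck the individual morphisms) the resulting scalar is exactly the diagram shown in $e295.pdf$, divided by $d_i$.

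The only subtlety to verify is that the pairing convention is being applied consistently. In the closing loop inherited from \eqref{loop_scalar}, one of the cup/cap pairs is the arbitrary one associated with $X_i^*$ as a right dual of $X_i$, while its partner is determined so that their loop evaluates to $d_i$. In the definition of $T^i_{jk}$ there are additional cup/cap pairs associated with $X_j$, $X_k$ (and again with $X_i$), each normalized by the same convention. Because the convention allows arbitrary rescalings that cancel on each pair, the product of these contributions is well-defined and independent of the auxiliary choices, so the diagram on the right-hand side of the claimed formula is unambiguous. The main (and only) thing to check is therefore bookkeeping of the normalisations, which is immediate from the construction. This completes the plan.
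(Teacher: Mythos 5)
Your proof is correct and follows essentially the same route as the paper: compose with $\hat{e}_\beta$, extract the scalar by closing the loop via \eqref{loop_scalar} in the pairing convention, and substitute the diagrammatic definition \eqref{defn_piv_operator} of $T^i_{jk}$ to arrive at the stated diagram over $d_i$. The extra remarks you add on rigid isotopy and consistency of the pairing convention are correct but are taken for granted in the paper's one-line computation.
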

\begin{proof} We can compute the scalar $ \langle \hat{e}_\beta T^i_{jk} (e_\alpha) \rangle$ by closing the loop, as in \eqref{loop_scalar}:
 \[
 \hat{e}_\beta T^i_{jk}  e_\alpha = \frac{1}{\ba \ig{e288.pdf} \ea} \ba \ig{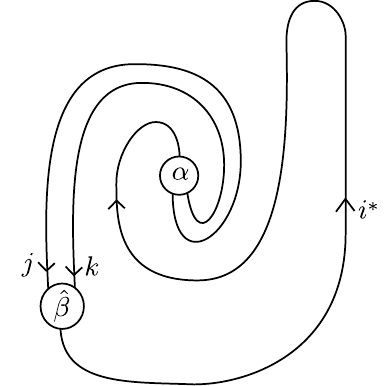} \ea = \frac{1}{d_i} \, \ba \ig{e295.pdf} \ea \, .
 \]
\end{proof}

\begin{lem} Suppose $X_i, X_j$ are simple objects and $f : X_i \otimes X_j \rightarrow X_i \otimes X_j$. Then the following equations hold in the pairing convention: \label{flip_lem}
\[
 \ba \ig{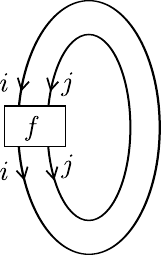} \ea \stackrel{\mathrm{(a)}}{=} \ba \ig{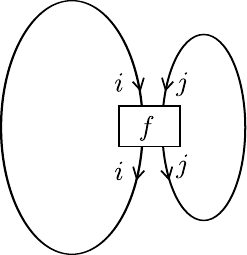} \ea \stackrel{\mathrm{(b)}}{=} \ba \ig{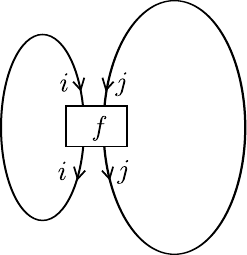} \ea \stackrel{\mathrm{(c)}}{=} \ba \ig{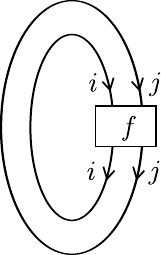} \ea \,.
 \]
\end{lem}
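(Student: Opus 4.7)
My strategy is to reduce the identities to closed ``theta-graph'' evaluations by expanding $f$ in a trivalent basis, and then to invoke rigid isotopy invariance (Theorem~\ref{invariance_rigid}) together with the pairing convention.

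\textbf{Step 1 (Basis expansion).} Using the resolution of identity \eqref{res_id} applied to the interior of $f \colon X_i \otimes X_j \to X_i \otimes X_j$, write
\[
f \;=\; \sum_{k, \alpha, \beta}\, c^{k}_{\alpha \beta}\; e_{\alpha} \circ \hat{e}_{\beta},
\]
where $e_{\alpha} \colon X_k \to X_i \otimes X_j$ runs over a trivalent basis of the hom-space and $\hat{e}_{\beta}$ is the dual basis. Each of the four diagrams in the lemma is linear in the coupon $f$, so it suffices to prove each of the three equalities assuming $f = e_{\alpha} \hat{e}_{\beta}$ for a single basis term.

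\textbf{Step 2 (Theta-graph reduction).} For such an $f$, each of the four closures becomes a planar ``theta-graph'' evaluation: the two trivalent vertices $e_{\alpha}$ and $\hat{e}_{\beta}$ joined by an internal $X_k$-edge, with the $X_i$- and $X_j$-arcs closed off by cup/cap pairs arranged in different ways around the coupons. The three equalities (a), (b), (c) each toggle the handedness of a single closure, i.e.\ they exchange a right-dual cup/cap pair for a left-dual one along a single strand.

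\textbf{Step 3 (Isotopy plus pairing convention).} On each side of an equality, the pairing convention determines the normalization of the ``left-dual'' cup/cap pair once the ``right-dual'' partner has been chosen, so that the corresponding closed loop has value $d_i$ (or $d_j$). Since $d_i = d_{i^*}$ and $d_j = d_{j^*}$ under a root choice, the compensating scalars introduced when swapping a right-dual pair for a left-dual pair on one strand match across the equality. The remaining underlying planar diagrams on the two sides are rigid isotopic --- the deformation simply swings a single loop from one side of the coupon to the other without crossing any other strand --- so Theorem~\ref{invariance_rigid} yields the claimed equality. Iterating this argument for the other two equalities completes the proof.

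\textbf{Main obstacle.} The delicate part is the bookkeeping in Step~3: for each of the three equalities I must carefully identify which cup/cap pair in each diagram plays the role of the ``arbitrary'' right-dual pair and which is the ``determined'' left-dual pair, and then verify that the normalization factors $d_i, d_j$ introduced by the pairing convention cancel exactly across the identity --- not merely up to sign. Once this is settled, everything else is rigid isotopy inside a planar disk.
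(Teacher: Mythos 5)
Your Step 3 contains a genuine gap. You claim that after the pairing-convention normalizations are matched, ``the remaining underlying planar diagrams on the two sides are rigid isotopic --- the deformation simply swings a single loop from one side of the coupon to the other,'' and you invoke Theorem~\ref{invariance_rigid}. But a rigid isotopy, by the definition given in Section~\ref{string_conventions}, must restrict to a \emph{translation} on every coupon: it cannot rotate a coupon, nor pass a strand over or under one. Swinging a loop from the left side of the coupon $f$ (or of a trivalent vertex $e_\alpha$, $\hat e_\beta$ after your basis expansion) to the right side is exactly such a forbidden move. Indeed, establishing when such belt-trick moves are permitted is the entire point of the pairing convention and of the pivotal operators $T^i_{jk}$, which precisely measure the failure of these moves to be free. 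So Theorem~\ref{invariance_rigid} cannot be used to conclude (a) or (c).

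The argument the paper actually uses for (a) is algebraic, not topological: because $X_i$ is simple, the partial trace of $f$ along the $X_j$ strand is an \emph{endomorphism of a simple object}, hence equals $\lambda\,\id_{X_i}$ for a scalar $\lambda$ by Schur's lemma (cf.\ equation~\eqref{extract_scalar}). That scalar is then computed by closing the loop via the pairing convention (equation~\eqref{loop_scalar}); since a scalar commutes with everything, it may be re-inserted closed the other way, which yields the right-hand side. Equality (b) really is just the interchange law (and there your isotopy reasoning would be fine). Equality (c) is proved like (a), with the additional use of $d_i = d_{i^*}$ because the loop reverses orientation. Your Step~1 basis expansion of $f$ is harmless but unnecessary; the crucial missing ingredient is the semisimplicity/Schur's-lemma step, which cannot be replaced by an isotopy argument.
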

\begin{proof}
To prove (a), write 
\[
 \ba \ig{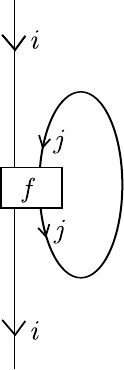} \ea \, = \, \lambda \, \ba \ig{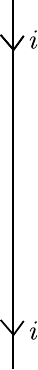} \ea 
\]
for some scalar $\lambda$, as in equation \eqref{extract_scalar}, and extract the value of $\lambda$ as in equation \eqref{loop_scalar}. Inserting this back into the left hand side of (a) immediately gives the right hand side. Equation (b) is the interchange law in a monoidal category. The proof of (c) is similar to the proof of (a), but uses in addition $d_i = d_{i^*}$.
 \end{proof}
 
%To summarize: although we cannot show that $T^i_{jk}$ is the identity, %one can show that its {\em square} is the identity. We credit the %following graphical proof of this fact to Hagge and Hong \cite{ref:hagge_hong}. %Our framework differs significantly from theirs: we do not work in %a system of fixed chosen duals --- if we did, we could not conceive %of $T^i_{jk}$ as an {\em involution}; this concept would translate %into the quadruple dual $\ast \ast \ast \ast$ being monoidally naturally %isomorphic to the identity functor. Moreover, we do not need to make %assumptions about the fusion category being skeletal and strictified. %Nevertheless the basic idea is the same.

\begin{thm}[{cf. \cite[Thm 3]{hh09-snbfcor3}}]  The operator $T^i_{jk}$ is an involution --- that is, $(T^i_{jk})^2 = \id$. \label{piv_involution_thm}
\end{thm}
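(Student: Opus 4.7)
The plan is to compute $(T^i_{jk})^2(f)$ graphically, starting from the diagrammatic definition \eqref{defn_piv_operator} and reducing it back to $f$ via rigid isotopy, the snake equations, and the pairing convention. The conceptual picture is that a single application of $T^i_{jk}$ performs a rotation of $f$ by $2\pi$ using a combination of left- and right-dual structure maps on each of the three strands $X_i$, $X_j$, $X_k$. Applying $T^i_{jk}$ a second time then performs a full $4\pi$ rotation, which in a rigid monoidal category ought to be trivializable once the pairing convention handles the mismatch between left and right duals.

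Concretely, I would first draw out $(T^i_{jk})^2(f)$ by substituting the diagram for $T^i_{jk}(f)$ from \eqref{defn_piv_operator} into itself. The resulting picture has $f$ in the middle, surrounded by two stacked layers of cups and caps on each of the three strands. Next, using rigid isotopy (Theorem \ref{invariance_rigid}) together with Lemma \ref{flip_lem}, I would slide $f$ past the intervening cups and caps so that on each strand $X_\ell$, the two applications of $T$ leave a configuration in which a right-dual unit is paired with a left-dual counit. By the pairing convention, this closed loop evaluates to $d_\ell \cdot \id_{X_\ell}$. The remaining cups and caps straighten via the ordinary snake equations \eqref{snake_eqns}, leaving $f$ multiplied by scalars that cancel against the implicit normalization built into the pairing convention.

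The main obstacle will be careful bookkeeping of orientations and of which cup or cap belongs to which (right- or left-dual) unit-counit pair. Different strands may need to be closed in opposite directions after the double rotation, but this is harmless because the root choice satisfies $d_\ell = d_{\ell^*}$ (as already exploited after equation \eqref{loop_scalar}); closing in either direction produces the same scalar. Part (a) of Lemma \ref{flip_lem} allows us to convert the left-over rotation of $f$ on one strand into its opposite, part (b) lets us reorder events vertically via interchange, and part (c) handles the opposite orientation — together these should be exactly what is needed to reduce the $4\pi$ rotation to the identity. If the bookkeeping works out as expected, the proof reduces to a short sequence of pictures: double application, isotopy using Lemma \ref{flip_lem}, cancellation of the three paired loops $d_i, d_j, d_k$ against the normalizations, and recognition of the result as $f$.
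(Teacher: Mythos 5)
Your plan captures the conceptual essence of the paper's argument — the belt-trick picture, rigid isotopy, Lemma \ref{flip_lem}, the pairing convention, and the role of $d_\ell = d_{\ell^*}$ — and is therefore very close in spirit to the paper's proof. However, there is a gap in the scalar bookkeeping of steps three through five that would derail the computation if carried out as written.

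You propose to manipulate the open diagram $T^2(f)$ directly, producing on each strand a closed loop worth $d_\ell$, and then to cancel these factors against ``the implicit normalization built into the pairing convention.'' No such normalization exists: the pairing convention only fixes the value of a closed loop once the other half of the pair is chosen; it does not divide anything out. If your sliding really left you with closed loops on the three strands, the outcome would be $d_i d_j d_k \cdot f$, or some partial product of those, with nothing to cancel it. Note also that Lemma \ref{flip_lem} is stated as an equality of morphisms \emph{without} any $d$-prefactors, so a correct application of it must not generate leftover $d_\ell$'s; if they are appearing, the sliding has been done incorrectly. The paper avoids this pitfall by first closing with a dual basis vector $\hat{e}_\beta$ and computing the scalar $\hat{e}_\beta \, T^2(e_\alpha)$ as a \emph{closed} diagram, which (after two uses of Lemma \ref{flip_lem}) simplifies cleanly to $d_i\,\delta^\beta_\alpha$; the single factor of $d_i$ is then exactly absorbed by the $1/d_i$ in the expansion formula \eqref{expand_loop}, giving $T^2(e_\alpha)=e_\alpha$. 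You should either adopt this matrix-element route, or else redo your open-diagram manipulation keeping in mind that Lemma \ref{flip_lem} and the snake equations \eqref{snake_eqns} are scalar-free, so the end result should be $f$ outright with no loop factors to cancel.
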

\begin{proof} The operator $T^2$ sends
 \[
  \ba \ig{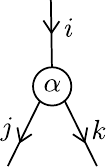} \ea \mapsto \ba \ig{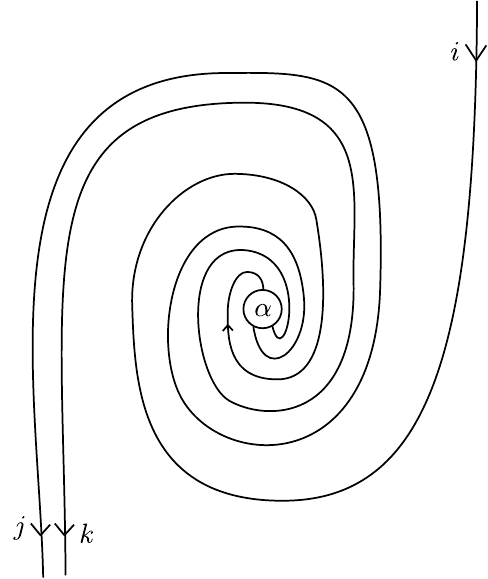} \ea .
 \]
Its matrix elements are thus:
%\begin{multline*}
%\ba \ig{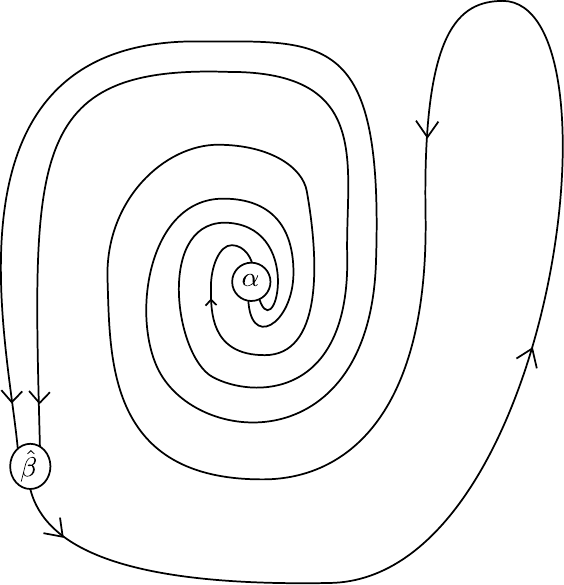} \ea = \ba \ig{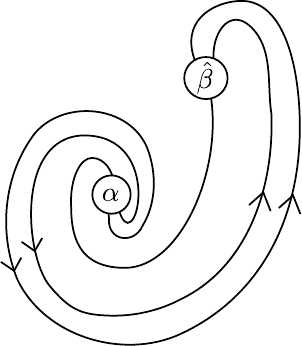} \ea \\
%= \ba \ig{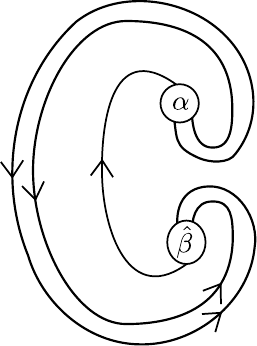} \ea = \ba \ig{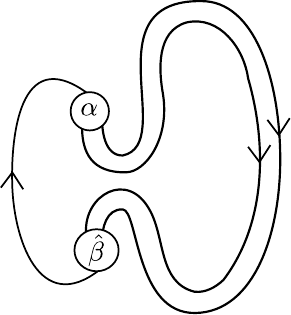} \ea = \ba \ig{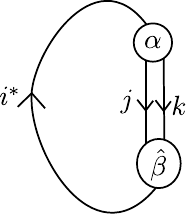} \ea = \delta^q_p.
%\end{multline*}
\[
\ba \ig{e200.pdf} \ea = \ba \ig{e201.pdf} \ea
\]
\[
= \ba \ig{e202.pdf} \ea = \ba \ig{e203.pdf} \ea = \ba \ig{e204.pdf} \ea = d_i \delta^\beta_\alpha
\]
where we have used Lemma \ref{flip_lem} twice in the third equation. In other words, we have expanded $T^2(e_\alpha)$ in the basis $e_\alpha$ using the technique from eq. \eqref{expand_loop}, and we have shown that $T^2 (e_\alpha) = e_\alpha$.  
\end{proof} \noindent
\begin{rem}This calculation is essentially the {\em Dirac belt trick} proving that $\pi_1 SO(3) = \mathbb{Z}/ 2\mathbb{Z}$, and provides a link between fusion categories and spin structures, as mentioned in Remark \ref{spin_remark}. See also \cite{Douglas:2013aa} and Remark \ref{remark_2_cocycle}.
\end{rem}
\begin{rem} If $\cat{C} = \text{Rep} H$ for a semisimple Hopf algebra $H$, then the identity
$(T^i_{jk})^2 = \id$ corresponds to the Larson-Radford formula $S^2 = \id$ (see \cite{eno02-ofc} and references therein). The string diagram argument above can be regarded
as giving a graphical proof of Radford's formula.
\end{rem}
\begin{rem} The proof in \cite[Thm 3]{hh09-snbfcor3} proceeds by first passing to a strictified skeletal category equivalent to $\cat{C}$. In our approach, the pairing convention, together with our conventions on string diagrams from Section \ref{string_conventions}, allows us to work directly in the category $\cat{C}$.
\end{rem}
Since $T^i_{jk}$ is an involution, the vector space $\Hom(X_i, X_j \otimes X_k)$ has a basis of eigenvectors $e_\alpha$ whose eigenvalues are $\pm 1$. We call such a basis a {\em pivotal basis} for the fusion category $\cat{C}$.

\begin{defn} The {\em pivotal symbols} $\epsilon^i_{jk,\alpha} = \pm 1$ are the eigenvalues of the pivotal operators,  $T^i_{jk} e_\alpha = \epsilon^i_{jk, p} e_\alpha$.
\end{defn}
Thus, in a fusion category, the vector spaces $\Hom(X_i, \, X_j \otimes X_k)$ decompose into the positive and negative eigenspaces of $T^i_{jk}$, \begin{equation} \label{decomp_eqn}
\Hom(X_i, \, X_j \otimes X_k) = \Hom(X_i, \, X_j \otimes X_k)_+ \,\oplus \, \Hom(X_i, \, X_j \otimes X_k)_-
\end{equation}
Note that this decomposition is {\em canonical} since our convention will be to make the unique root choice where the $d_i$ are positive real numbers, as guaranteed by Theorem \ref{realpositive} below.
\begin{lem}\label{discomb} In a pivotal basis, the pivotal symbols can be computed as follows:
 \[
 \epsilon^i_{jk, \alpha} = \frac{1}{d_i}
  \ig{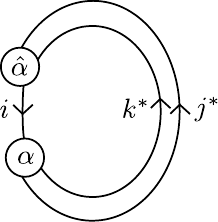} \, .
  \]
\end{lem}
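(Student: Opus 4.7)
The claim should follow almost immediately from Lemma \ref{mat_elements}, which already expresses the matrix elements of $T^i_{jk}$ in an arbitrary basis as the normalized closed-loop diagram. My plan is simply to specialize that lemma to the case of a pivotal basis and take a diagonal entry.

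More precisely, suppose $\{e_\alpha\}$ is a pivotal basis for $\Hom(X_i, X_j \otimes X_k)$, so that $T^i_{jk}(e_\alpha) = \epsilon^i_{jk,\alpha}\, e_\alpha$ by definition. Post-composing with the dual basis element $\hat{e}_\alpha$ gives $\hat{e}_\alpha \circ T^i_{jk}(e_\alpha) = \epsilon^i_{jk,\alpha}\, \hat{e}_\alpha \circ e_\alpha = \epsilon^i_{jk,\alpha}\, \id_{X_i}$, so that $\langle \hat{e}_\alpha T^i_{jk}(e_\alpha) \rangle = \epsilon^i_{jk,\alpha}$. On the other hand, Lemma \ref{mat_elements} evaluates this same scalar as the closed-loop diagram $\tfrac{1}{d_i}\, \ig{e295.pdf}$ with $\beta = \alpha$, which is the diagram displayed in the statement.

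There is no real obstacle; the only thing to keep in mind is the consistent use of the pairing convention when closing up the diagram, but this is already built into Lemma \ref{mat_elements}. So the proof can be presented in essentially one line, invoking Lemma \ref{mat_elements} together with the eigenvalue equation $T^i_{jk} e_\alpha = \epsilon^i_{jk,\alpha} e_\alpha$.
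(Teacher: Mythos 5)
Your proposal is correct and matches the paper exactly: the paper's proof consists of the single line ``Follows immediately from Lemma \ref{mat_elements},'' and your derivation simply spells out the elementary bookkeeping behind that ``immediately'' (specializing to a pivotal basis, taking $\beta = \alpha$, and reading off the eigenvalue). Nothing is missing.
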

\begin{proof}
Follows immediately from Lemma \ref{mat_elements}.
\end{proof}
\begin{lem} The pivotal operators $T^1_{11}$ and $T^1_{i i^*}$ are the identity maps for all $i \in I$. \label{triv_pivs}
\end{lem}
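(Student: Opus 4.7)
Observe first that both $\Hom(1,1\otimes 1)$ and $\Hom(1, X_i\otimes X_{i^*})$ are one-dimensional: the former by the unit axiom of a monoidal category, and the latter by semisimplicity combined with the fact that $X_i\otimes X_{i^*}$ contains the simple object $1$ with multiplicity exactly one, since $X_{i^*}$ is dual to $X_i$. By Theorem \ref{piv_involution_thm}, both operators are involutions on these one-dimensional spaces, hence act as $\pm\id$. The entire proof therefore reduces to ruling out the minus sign in each case by exhibiting a nonzero fixed vector.

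For $T^1_{11}$, take $e_1 = \id_1 \in\Hom(1,1\otimes 1)$ as a basis, with dual basis $\hat e_1 = \id_1$. In the diagram \eqref{defn_piv_operator} defining $T^1_{11}(e_1)$ every strand is labelled by the monoidal unit. By monoidal coherence, every such diagram (cups, caps, and rearrangements involving only $1$-strands) reduces to the identity on $1$; hence $T^1_{11} = \id$. Equivalently, one may invoke Lemma \ref{mat_elements} and note that the resulting closed loop, having all strands labelled by $1$, equals $d_1 = 1$.

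For $T^1_{ii^*}$, fix a right dual $(\eta,\epsilon)$ of $X_i$ and let $(n,e)$ be the corresponding left dual structure determined by the pairing convention, so that $\epsilon\circ n = d_i = e\circ\eta$. Take $e_1 := n \in \Hom(1,X_i\otimes X_{i^*})$ as the basis vector and $\hat e_1 := d_i^{-1}\epsilon$ as its dual basis (note $\hat e_1\circ e_1 = d_i^{-1}\cdot d_i = \id_1$). Apply Lemma \ref{mat_elements}: the single matrix entry of $T^1_{ii^*}$ is $d_1^{-1}$ times a closed loop obtained by plugging $e_1$ into the defining picture for $T$ and capping with $\hat e_1$. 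Since $d_1 = 1$ and the input strand $X_1 = 1$ is invisible, the loop consists solely of the left-duality maps $(n,e)$ paired against the right-duality maps $(\eta,\epsilon)$. Using Lemma \ref{flip_lem} to reposition the $X_i$ and $X_{i^*}$ strands, together with the snake equations, the loop straightens into a product of pairing-convention circles, each contributing a factor of $d_i$, which combine with the explicit $d_i^{-1}$ from $\hat e_1$ to give $+1$. Hence $T^1_{ii^*}(n) = n$, and therefore $T^1_{ii^*} = \id$.

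The main obstacle is the routine but careful bookkeeping of how the scalars $d_i$ produced by the pairing convention cancel against each other during the simplification of the loop. The symmetric condition $d_i = d_{i^*}$ in the definition of a root choice is precisely what guarantees this cancellation without producing a residual sign, and is the only ingredient beyond Lemma \ref{flip_lem} and the snake equations that is actually needed.
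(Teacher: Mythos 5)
Your argument is correct and fills in what the paper dispatches as ``an elementary string diagram argument.'' The structural device of invoking Theorem \ref{piv_involution_thm} to reduce the problem to ruling out a sign on a one-dimensional space is a nice shortcut, and the choice of basis $e_1 = n$, $\hat e_1 = d_i^{-1}\epsilon$ with $\hat e_1 \circ e_1 = \id_1$ is exactly the right setup for Lemma \ref{mat_elements}. This is compatible with the paper's intent, just slightly more indirect than a bare-hands manipulation of the picture \eqref{defn_piv_operator} (one could equally well show $T^1_{ii^*}(n) = n$ directly by rigid isotopy without first knowing $T$ is an involution).

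Two minor points you should tighten. First, the sentence ``the loop straightens into a product of pairing-convention circles, each contributing a factor of $d_i$, which combine with the explicit $d_i^{-1}$ from $\hat e_1$ to give $+1$'' does not quite balance: a single $d_i^{-1}$ cancels exactly one $d_i$, so you should make explicit that after applying Lemma \ref{flip_lem} and the snake equations the diagram collapses to exactly one $X_i$-circle (worth $d_i$) together with the trivial $X_1$-circle already accounted for by the $1/d_1$ prefactor in Lemma \ref{mat_elements}; as written a reader could worry about higher powers of $d_i$. Second, both your argument and the paper's silently assume $d_1 = 1$; before Theorem \ref{realpositive} a root choice only forces $d_1^2 = 1$, and by the sign-change rule stated after the definition of the pivotal operators, replacing $d_1 \mapsto -d_1$ flips the sign of both $T^1_{11}$ and $T^1_{ii^*}$. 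It is worth recording the convention $d_1 = 1$ explicitly (it is forced once one adopts the canonical positive root choice, but the lemma is stated before that point in the paper).
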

\begin{proof} Follows from an elementary string diagram argument.
\end{proof}

\subsection{Positivity of paired dimensions}In this subsection, we show that the paired dimensions $d_{\{i,i^*\}}$ are positive real numbers.

The first step is to show that a root choice gives rise to a `twisted' homomorphism from the Grothendieck ring to $\mathbb{C}$, which is to be compared with \eqref{FPeqn}.

\begin{prop}[{cf. \cite[pg. 593]{eno02-ofc}}]\label{homprop} In any root choice $\{d_i\}$, we have 
\[
d_j d_k = \displaystyle \sum_i \Tr (T^i_{jk}) \, d_i.
\]
\end{prop}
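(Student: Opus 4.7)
The strategy is to exploit the matrix-element formula for $T^i_{jk}$ from Lemma~\ref{mat_elements}, sum it up into the trace, and simplify the resulting closed-loop diagram by applying the resolution of identity~\eqref{res_id}. The $d_i$ weighting is precisely what is needed to cancel the $1/d_i$ in the matrix-element formula.

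First I would unfold the trace. By Lemma~\ref{mat_elements},
\[
\Tr(T^i_{jk}) \;=\; \sum_\alpha \langle \hat{e}_\alpha \, T^i_{jk}(e_\alpha)\rangle \;=\; \frac{1}{d_i}\sum_\alpha D_{i,\alpha},
\]
where $D_{i,\alpha}$ is the closed string diagram of that lemma with $\beta=\alpha$, i.e.\ the pivotal-operator template with $e_\alpha$ and $\hat{e}_\alpha$ inserted and the $X_i$ strand closed using the pairing convention. Multiplying both sides by $d_i$ and summing over $i$ yields
\[
\sum_i d_i \, \Tr(T^i_{jk}) \;=\; \sum_{i,\alpha} D_{i,\alpha}.
\]

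Next, each $D_{i,\alpha}$ contains the subdiagram $e_\alpha \circ \hat{e}_\alpha : X_j \otimes X_k \to X_j \otimes X_k$. Summing over $i$ and $\alpha$ and invoking the resolution of identity~\eqref{res_id} replaces this sum of projectors by $\id_{X_j \otimes X_k}$. What remains is a closed planar diagram built entirely from the $X_j$- and $X_k$-strands together with their right-dual unit and left-dual counit maps, correlated via the pairing convention. By inspection of the pivotal-operator template~\eqref{defn_piv_operator}, once the $X_i$ strand has been contracted to the identity the $X_j$- and $X_k$-strands wrap around on opposite sides and are no longer linked, so by rigid-isotopy invariance (Theorem~\ref{invariance_rigid}) the diagram is isotopic to the disjoint union of two closed loops, one of $X_j$ and one of $X_k$. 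By the pairing convention these evaluate to $d_j$ and $d_k$ respectively, giving the product $d_j d_k$.

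The main obstacle is the penultimate step: verifying that after contracting the $X_i$-subdiagram via the resolution of identity, the two remaining strands genuinely disentangle into two \emph{independent} closed loops rather than forming a single linked loop or an unexpected scalar multiple thereof. This is a topological check about~\eqref{defn_piv_operator} that is essentially the reverse of the Dirac-belt geometry appearing in Theorem~\ref{piv_involution_thm}, and it relies crucially on the planarity of string diagrams in a (non-braided) fusion category. Once this separation is justified, the formula $d_j d_k = \sum_i \Tr(T^i_{jk})\, d_i$ drops out.
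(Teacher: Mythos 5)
Your proof is correct and is essentially the paper's argument run in reverse. The paper starts from $d_j d_k$ (drawn as two closed loops), rearranges them by rigid isotopy, inserts the resolution of identity \eqref{res_id} on the $X_j \otimes X_k$ strand, and then identifies each summand as $\epsilon^i_{jk,\alpha} d_i$ via Lemma~\ref{discomb}; summing over $\alpha$ produces the trace. You run the chain from $\sum_i d_i \Tr(T^i_{jk})$ instead, expanding the trace via Lemma~\ref{mat_elements}, collapsing the sum over $i,\alpha$ by the resolution of identity, and then appealing to rigid isotopy to separate the result into two independent loops. The "obstacle" you flag — verifying that the two loops disentangle — is precisely the content of the first two diagram equalities in the paper's proof, and it does indeed resolve by planar (rigid) isotopy: after the $X_i$-strand has been contracted, the $X_j$ and $X_k$ loops lie in a nested unlinked configuration in the plane, so Theorem~\ref{invariance_rigid} lets them be pulled apart. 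So your concern is legitimate but not a gap; it is the same geometric step the paper carries out, just read in the other direction.
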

\begin{proof} \begin{align*}
 d_j d_k &= \ba \ig{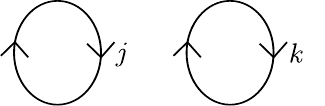} \ea \\
  &= \ba \ig{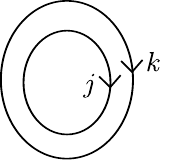} \ea \\
  &= \sum_{i,\alpha} \ba \ig{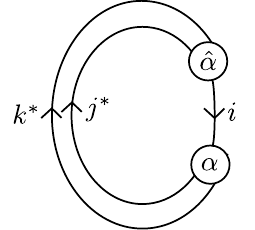} \ea \\
 &= \sum_{i,\alpha} \epsilon^i_{jk,\alpha} d_i \quad \text{(by Lemma \ref{discomb})} \\
 &= \sum_i \Tr(T^i_{jk}) \, d_i.
  \end{align*}
\end{proof}

\begin{lem}[{cf. \cite[pg 594]{eno02-ofc}}]\label{Tsymprop} The numbers $\Tr(T^i_{jk})$ have the following symmetry properties:
 \begin{enumerate}
  \item $\Tr(T^i_{jk}) = \Tr(T^{k^*}_{i^*j}) \quad$ \text{(Conjugate cyclic)} \\
  \item $\Tr(T^i_{jk}) = \Tr(T^{i^*}_{k^*j^*}) \quad$ \text{(Conjugate symmetric)}
 \end{enumerate}
 \end{lem}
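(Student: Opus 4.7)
The plan is to realise $\Tr(T^i_{jk})$ as the evaluation of a closed string diagram --- a ``twisted theta graph'' with three labelled strands $X_i, X_j, X_k$ meeting at two trivalent vertices --- and to deduce both symmetries from rigid isotopies of the plane acting on this diagram, using the orientation-reversal symmetry of the pairing convention ($d_\ell = d_{\ell^*}$) to re-read the dualised labels.

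First I would make the closed-diagram representation precise. By Lemma \ref{mat_elements}, in an arbitrary trivalent basis $\{e_\alpha\}$ we have
\[
d_i\, \Tr(T^i_{jk}) \;=\; \sum_\alpha \bigl\langle \text{closed diagram with } e_\alpha,\hat{e}_\alpha \bigr\rangle.
\]
Because $\sum_\alpha e_\alpha \otimes \hat{e}_\alpha$ is canonical (basis-independent), and graphically amounts to inserting an internal $X_i$-strand between the two trivalent vertices, the right-hand side is the evaluation of a single closed diagram $\Theta(i,j,k)$: a planar theta graph whose three strands are labelled $X_i, X_j, X_k$ and which carries the pivotal twist from the definition of $T^i_{jk}$, all closures being taken according to the pairing convention.

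For (i) I would then rigidly isotope $\Theta(i,j,k)$ by a $120^\circ$ planar rotation about its centre; by Theorem \ref{invariance_rigid} this preserves the scalar evaluation. The rotation cycles the three strands and reverses the orientations of two of them. By the pairing convention together with $d_\ell = d_{\ell^*}$, each orientation-reversed $X_\ell$-strand is re-read as an $X_{\ell^*}$-strand of the same weight. Tracking the pivotal twist through the rotation and matching the labels identifies the rotated diagram with $\Theta(k^*, i^*, j)$, so that $d_i \Tr(T^i_{jk}) = d_{k^*}\Tr(T^{k^*}_{i^*j})$; the prefactors cancel since $d_i = d_{i^*}$ and the original and rotated $X_i$-loops carry the same weight, establishing (i). For (ii) I would instead apply the reflection symmetry of the underlying theta graph that transposes the $X_j$- and $X_k$-strands while reversing all three orientations; the analogous bookkeeping identifies the evaluation with $d_{i^*}\Tr(T^{i^*}_{k^*j^*}) = d_i \Tr(T^{i^*}_{k^*j^*})$, yielding (ii). Alternatively, once (i) is in hand, (ii) can be obtained by iterating (i) and comparing the two sides, bypassing the need to treat the reflection directly.

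The main obstacle is tracking the pivotal twist faithfully under these topological moves, so that the transformed diagram really encodes $T$ on the permuted and dualised labels rather than its inverse or some rescaling. This is where the symmetry $d_\ell = d_{\ell^*}$ of the root choice is essential: it ensures that the pairing convention is itself symmetric under reversal of the orientation of any simple strand, so that rotations and reflections of $\Theta$ act compatibly with the definition of the pivotal operator on the new triple of labels.
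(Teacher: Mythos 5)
Your strategy — passing from operator traces to a closed string diagram and invoking isotopy invariance plus the pairing convention — is a natural one and is topologically close in spirit to what the paper does, but as written it contains a genuine numerical gap that invalidates the argument.

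You set $\Theta(i,j,k) := d_i\,\Tr(T^i_{jk})$, and claim that a $120^\circ$ rotation identifies $\Theta(i,j,k)$ with $\Theta(k^*,i^*,j)$, i.e.\ that $d_i\,\Tr(T^i_{jk}) = d_{k^*}\,\Tr(T^{k^*}_{i^*j})$. If the lemma is true (and it is), this equation forces $d_i = d_{k^*}$ whenever $N^i_{jk}\neq 0$, which is simply false in general: already in the Fibonacci/Yang--Lee type examples, $N^1_{\tau\tau}\neq 0$ while $d_1\neq d_\tau = d_{\tau^*}$. So the closed diagrams $\Theta(i,j,k)$ and $\Theta(k^*,i^*,j)$ are \emph{not} equal, and hence cannot be related by a rigid isotopy. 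Your attempted fix --- ``the prefactors cancel since $d_i=d_{i^*}$'' --- is a red herring: the relation you would need is $d_i = d_{k^*}$, which is not implied by $d_i = d_{i^*}$. What actually happens when you deform the diagram is that the rotated coupons are \emph{not} in dual-basis position for the new fusion channel $\Hom(X_{k^*}, X_{i^*}\otimes X_j)$; they differ from a dual basis by exactly the ratio $d_i/d_{k^*}$, which consumes the apparent discrepancy and is the content that needs to be proved, not waved away. Relatedly, the planar $120^\circ$ rotation is geometrically not quite the right move: a theta graph drawn in the disc with coupons as source and sink has no $\mathbb{Z}/3$ rotational symmetry, and the pivotal twist further singles out the apex strand; the transformation you need is a \emph{local} bending move on one coupon and its adjacent legs, not a global rotation of the closed diagram.

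The paper's proof sidesteps all of this normalisation bookkeeping by working with the open diagram rather than the closed one: starting from an eigenbasis $\{e_\alpha\}$ of $T^i_{jk}$ with eigenvalues $\epsilon^i_{jk,\alpha}$, it defines new vectors $f_\alpha\in\Hom(X_{k^*}, X_{i^*}\otimes X_j)$ (resp.\ $g_\alpha$ built from $\hat{e}_\alpha$ for part (ii)) by bending two legs of $e_\alpha$ using chosen right-duality maps, checks by a direct string-diagram manipulation that $T^{k^*}_{i^*j}f_\alpha = \epsilon^i_{jk,\alpha} f_\alpha$ (resp.\ $T^{i^*}_{k^*j^*} g_\alpha = \epsilon^i_{jk,\alpha} g_\alpha$), and concludes $\Tr(T^{k^*}_{i^*j})=\Tr(T^i_{jk})$ since the $f_\alpha$ form a basis. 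Because the trace is basis-independent, no normalisation of $f_\alpha$ or matching of dimension factors is needed --- precisely the issue your closed-diagram approach stumbles on. If you want to salvage the closed-diagram point of view, you would have to prove that the rotated coupons differ from a dual basis by a scalar multiple $d_i/d_{k^*}$ independent of $\alpha$; that computation is essentially equivalent to the paper's eigenvector transport, so the apparent shortcut does not save work.
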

 \begin{proof} To establish (i), suppose that $\{ e_\alpha \colon X_i \rightarrow X_j \otimes X_k \}$ are eigenvectors of $T^i_{jk}$, so that $T^i_{jk} e_\alpha = \epsilon^i_{jk, \alpha} e_\alpha$. Choose arbitrary right duals $(X_i^*, \eta_i, \epsilon_i)$ and $(X_k^*, \eta_k, \epsilon_k)$ for $X_i$ and $X_k$. Then the basis for $\Hom(X_k^*, X_i^*, X_j)$ given by
  \[
   f_\alpha := \ba \ig{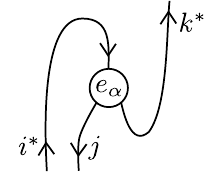} \ea
  \]
are also eigenvectors of $T^{k^*}_{i^*, j}$ with the same eigenvalues as the $e_\alpha$, since
  \[
  T^{k^*}_{i^*, j} (f_\alpha) = \ba \ig{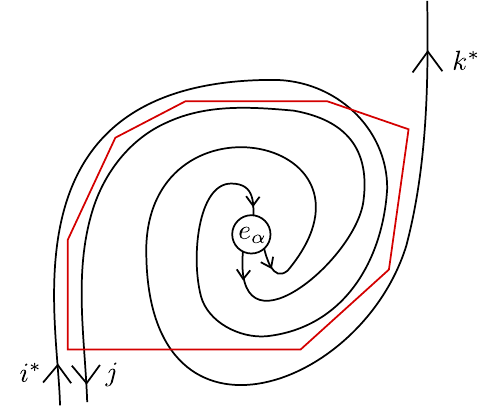} \ea = \epsilon^i_{jk, \alpha} \ba \ig{e216.pdf} \ea = \epsilon^i_{jk,\alpha} f_\alpha.
  \]
The proof of (ii) is similar, except one uses the {\em dual} basis $\{\hat{e}_\alpha \colon X_j \otimes X_k \rightarrow X_i\}$ to define a basis $\{g_\alpha\}$ for $\Hom(X_i^*, X_k^* \otimes X_j^*)$ by setting
 \[
  g_\alpha := \ba \ig{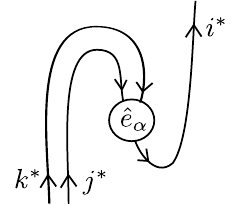} \ea.
 \]
A similar string diagram argument then establishes that $T^{i^*}_{k^*j^*} g_\alpha = \epsilon^i_{jk, \alpha} g_\alpha$.
 \end{proof}
\begin{thm}[{cf. \cite[Thm 2.3]{eno02-ofc}}]\label{realpositive} The paired dimensions $d_{\{i, i^*\}}$ are real and positive.
\end{thm}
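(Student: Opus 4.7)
The plan is to combine the twisted character equation of Proposition~\ref{homprop} with Frobenius-Perron theory applied to the fusion matrices, together with a Galois-theoretic rigidity argument, following the strategy of \cite[Thm 2.3]{eno02-ofc}.

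First, I would assemble the eigenvector equation. Define the $|I| \times |I|$ real integer matrices $P_j$ with entries $(P_j)_{ki} := \Tr(T^i_{jk})$, and let $N_j$ be the fusion matrix $(N_j)_{ki} := N^i_{jk}$. Proposition~\ref{homprop} gives $P_j d = d_j d$ for the root choice vector $d = (d_i)_{i \in I}$, while Frobenius-Perron theory gives $N_j d^+ = d^+_j d^+$ for the positive vector of Frobenius-Perron dimensions. Since $T^i_{jk}$ is an involution on an $N^i_{jk}$-dimensional space (Theorem~\ref{piv_involution_thm}), we have $|\Tr(T^i_{jk})| \leq N^i_{jk}$, so $P_j$ is entrywise dominated by $N_j$ in absolute value. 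Taking absolute values of $P_j d = d_j d$ yields $|d_j| \cdot |d| \leq N_j \cdot |d|$ entrywise; pairing this with $d^+$, which is also a positive eigenvector of $N_j^T$ with eigenvalue $d^+_j$ (by the duality symmetries $N^i_{jk} = N^{j^*}_{k, i^*}$ and $d^+_i = d^+_{i^*}$), yields $|d_j| \leq d^+_j$ for every $j \in I$.

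Second, I would extract positivity via Galois invariance. The paired dimensions $d_{\{i, i^*\}}$ generate a number field $K$. For each $\sigma \in \mathrm{Gal}(\bar{\mathbb{Q}}/\mathbb{Q})$, applying $\sigma$ entrywise to $d$ produces a new vector $\sigma(d)$ which still satisfies $P_j \sigma(d) = \sigma(d_j)\sigma(d)$, since $P_j$ has integer entries. This $\sigma(d)$ is to be interpreted as the root choice vector for the Galois-conjugate fusion category, whose Frobenius-Perron dimensions coincide with those of $\cat{C}$ (the $d^+_j$ being defined by positive-integer-coefficient equations and hence Galois invariant). By the first step, $|\sigma(d_j)| \leq d^+_j$ for all Galois conjugates. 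In the special case $\sigma = $ complex conjugation, $\bar d$ is also an eigenvector of $P_j$ with eigenvalue $\bar d_j$, and the normalization $d_1 = 1$ combined with the symmetries of Lemma~\ref{Tsymprop} shows that any nontrivial imaginary part in some $d_j^2$ would produce a second independent root choice incompatible with the integer structure of the $P_j$ and the sign ambiguity of root choices (parametrized by $\{x_i = \pm 1 : x_i = x_{i^*}, x_1 = 1\}$). Thus each $d_j$ lies in $\mathbb{R}$, and since $d_j \neq 0$, we have $d_{\{j, j^*\}} = d_j^2 \in \mathbb{R}_{>0}$.

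The main obstacle is the last step: promoting the bound $|d_j| \leq d^+_j$ to the stronger statement that $d_j$ is real. The bound is generically strict (e.g. for Yang-Lee, $d_\tau = (-1+\sqrt{5})/2 < d^+_\tau = (1+\sqrt{5})/2$), so one cannot simply appeal to the equality case of Perron-Frobenius. The reality requires the Galois-theoretic rigidity: the paired dimensions are algebraic, the $P_j$ and $d^+$ are Galois invariant, and the constraints from all Galois conjugates together with the discreteness of the sign ambiguity force $d_{\{i, i^*\}}$ to be a positive real algebraic number. Packaging this argument cleanly -- isolating exactly which combination of the symmetries of Lemma~\ref{Tsymprop}, the normalization $d_1 = 1$, and the Perron-Frobenius bound does the work -- is the delicate part of the proof.
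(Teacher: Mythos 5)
Your Step 1 is sound and essentially reproduces the estimate used in Corollary~\ref{char_pseudo_cor} of the paper to characterize pseudo-unitarity (the bound $\Tr(T^i_{jk}) \leq N^i_{jk}$, the Perron--Frobenius vector $d^+$, and the duality symmetry $d^+_i = d^+_{i^*}$). But you correctly flag the real problem: $|d_j| \leq d^+_j$ is a bound on a modulus, and it does not say that $d_{\{j,j^*\}} = d_j^2$ is real. Your Step 2 is not actually a proof. You invoke Galois invariance of the integer matrices $P_j$ and a ``rigidity'' of sign-flip ambiguities, but you never produce an argument that rules out a non-real eigenvalue $d_j$, and your own closing paragraph concedes that isolating what forces reality ``is the delicate part of the proof.'' As written this is a gap. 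In particular, $\bar{\mathbf{d}}$ being a second eigenvector of each $P_j$ does not by itself contradict anything: the $P_j$ can have many common eigenvectors, and you have not shown that a root-choice vector is determined up to the $\{\pm 1\}$-ambiguity by the eigenvector equations alone.

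The paper's proof uses the same eigenvector equation $A_j\mathbf{d} = d_j\mathbf{d}$ (with $A_j = P_j$ in your notation), but then takes a completely different and much shorter route: it observes that $A_j A_{j^*}\mathbf{d} = d_j d_{j^*}\mathbf{d} = d_{\{j,j^*\}}\mathbf{d}$, and then uses Lemma~\ref{Tsymprop} to establish the crucial identity $A_{j^*} = A_j^T$. (Concretely: $[A_{j^*}]_{ik} = \Tr(T^k_{j^*i}) = \Tr(T^{i^*}_{k^*j^*}) = \Tr(T^i_{jk}) = [A_j]_{ki}$.) Hence $d_{\{j,j^*\}}$ is an eigenvalue of the real positive semidefinite matrix $A_j A_j^T$, and so is real and nonnegative; since paired dimensions are already known to be nonzero, it is positive. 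You cite Lemma~\ref{Tsymprop} only in passing; the transpose relation it delivers is exactly the missing ingredient that turns your eigenvector setup into a finished proof, with no Perron--Frobenius estimate and no Galois theory required. (That said, a Galois argument in the spirit of \cite[Thm 2.3]{eno02-ofc} is used there for the stronger claim that $D_{\cat{C}} \geq 1$; for bare positivity of each $d_{\{j,j^*\}}$, the symmetric-matrix argument is cleaner and is what the paper adopts.)
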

\begin{proof} We organize the various roots $d_i \equiv \ba \ig{e288.pdf} \ea$ into a column vector:
 \[
   \mathbf{d} = \left( \ba \ig{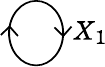} \ea, \ba \ig{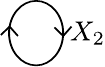} \ea, \ldots, \ba \ig{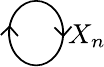} \ea \right)^T.
 \]
Define the matrices $A_j$ via $[A_j]_{ik} = \Tr(T^k_{ji})$. Then Proposition \ref{homprop} says that $\mathbf{d}$ is a simultaneous eigenvector of each $A_j$ with eigenvalue $d_j$, i.e.
 \[
  A_j \mathbf{d} = d_j \mathbf{d}.
 \]
Thus we have $A_j A_{j^*} \mathbf{d} = d_j d_{j^*} \mathbf{d} = d_{\{j,j^*\}} \mathbf{d}$ and so $d_{\{j,j^*\}}$ is an eigenvalue of the matrix $A_j A_{j^*}$. But $A_{j^*} = A_j^T$, by the symmetry properties established in Lemma \ref{Tsymprop}:
 \begin{align*}
  [A_{j^*}]_{ik} &= \Tr(T^k_{j^* i}) \\
   &= \Tr(T^{i^*}_{k^* j^*})  \\
   &= \Tr(T^i_{jk}) \\
   &= [A_j]_{ki}.
  \end{align*}
Thus $d_{\{j,j^*\}}$ is an eigenvalue of the positive definite real matrix $A_j A_j^T$ and is therefore real and positive.
\end{proof}
\begin{rem} From now on, we will always work in the {\em canonical} root choice on a fusion category given by the positive square roots of the paired dimensions, $d_i = \sqrt{d_{\{i, i^*\}}}$. 
\end{rem}
\begin{cor}[{cf. \cite[Prop 8.21]{eno02-ofc}}] A fusion category is pseudo-unitary if and only if its Frobenius-Perron dimensions equal its fusion dimensions, that is $d^+_i = d_i$ for all $i \in I$. \label{char_pseudo_cor}
\end{cor}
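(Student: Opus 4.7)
The ``if'' direction is immediate: if $d^+_i = d_i$ for all $i$, then $FP_\cat{C} = \sum_i (d^+_i)^2 = \sum_i d_i^2 = \sum_i d_{\{i,i^*\}} = D_\cat{C}$.

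For the ``only if'' direction, my plan is to show that $d_i \leq d^+_i$ for every simple object $X_i$, and then use the equality condition. Introduce the fusion matrices $N_j$ with $(N_j)_{ik} = N^i_{jk}$, so that by definition of the Frobenius-Perron dimensions the positive vector $\mathbf{d}^+ = (d^+_i)_{i\in I}$ satisfies $N_j \mathbf{d}^+ = d^+_j \mathbf{d}^+$, and recall the matrices $A_j$ from the proof of Theorem \ref{realpositive}, whose entries are $(A_j)_{ik} = \Tr(T^k_{ji})$ and which satisfy $A_j \mathbf{d} = d_j \mathbf{d}$ by Proposition \ref{homprop}. Since $T^k_{ji}$ is an involution on the vector space $\Hom(X_k, X_j \otimes X_i)$ of dimension $N^k_{ji}$, its trace is a sum of $\pm 1$'s, giving the pointwise bound $|(A_j)_{ik}| \leq (N_j)_{ik}$.

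Next I would apply Perron-Frobenius. Taking absolute values in $A_j \mathbf{d} = d_j \mathbf{d}$ and using positivity of $\mathbf{d}$, one obtains the entrywise inequality $d_j \mathbf{d} \leq N_j \mathbf{d}$. Since $\mathbf{d}$ is strictly positive, the Collatz-Wielandt principle (or equivalently, evaluating against a left Perron eigenvector of $N_j$) yields $d_j \leq \rho(N_j) = d^+_j$, where the last equality is the standard fact that $d^+_j$ is the Perron eigenvalue of $N_j$ (this is how the Frobenius-Perron dimension is characterized).

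Finally, having established $d_i \leq d^+_i$ for all $i$, the assumption $D_\cat{C} = FP_\cat{C}$ reads
\[
\sum_{i \in I} d_i^2 = \sum_{i \in I} (d^+_i)^2,
\]
and since each summand satisfies $d_i^2 \leq (d^+_i)^2$, equality of the sums forces $d_i = d^+_i$ for every $i \in I$, as required.

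The main obstacle is the inequality $d_i \leq d^+_i$; the rest is bookkeeping. The key ingredients are the pointwise majorization $|A_j| \leq N_j$, which is available precisely because we know (by Theorem \ref{piv_involution_thm}) that the $T^k_{ji}$ are involutions and hence have integer trace bounded by dimension, together with the Perron-Frobenius property of the non-negative matrices $N_j$.
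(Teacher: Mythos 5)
Your argument is essentially the same as the paper's: both reduce the corollary to the inequality $d_i \leq d^+_i$ for all $i$, obtain it from $|\Tr(T^i_{jk})| \leq N^i_{jk}$ (a consequence of Theorem~\ref{piv_involution_thm}) combined with the eigenvector identity of Proposition~\ref{homprop}, and then use the Frobenius--Perron eigenvector equation for $\mathbf{d}^+$ to pass from the entrywise bound to an eigenvalue bound. The paper packages the last step as an operator-norm estimate on the $\ell^1$ space weighted by $\mathbf{d}^+$, while you package it as an entrywise inequality $d_j\mathbf{d} \leq N_j\mathbf{d}$ followed by Collatz--Wielandt or pairing with a left Perron eigenvector; these are the same computation in different clothing, and your ``pairing with the left eigenvector'' variant is exactly what the paper's weighted $\ell^1$ estimate is doing.

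One small index bookkeeping slip: with your definitions $(N_j)_{ik}=N^i_{jk}$ and $(A_j)_{ik}=\Tr(T^k_{ji})$, the involution gives $|(A_j)_{ik}|\leq N^k_{ji}$, and in general $N^k_{ji}\neq N^i_{jk}$ (they differ by a dual on one index; rigidity gives $N^k_{ji}=N^i_{j^*k}$, not $N^i_{jk}$). So as written the claimed pointwise bound $|(A_j)_{ik}|\leq (N_j)_{ik}$ has a transpose/dual mismatch. This is easily repaired: define $(N_j)_{ik}:=N^k_{ji}$ instead. Then $|A_j|\leq N_j$ entrywise, and the Frobenius--Perron equation still gives $N_j\mathbf{d}^+=d^+_j\mathbf{d}^+$ as well as $(\mathbf{d}^+)^T N_j=d^+_j(\mathbf{d}^+)^T$ (using $d^+_{j^*}=d^+_j$), so the rest of your argument goes through unchanged.
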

\begin{proof} It suffices to show that $d_i \leq d^+_i$ for all $i \in I$. Use the Frobenius-Perron dimensions to define a norm on $\mathbb{C}^{|I|}$ by setting $\| \mathbf{x} \| = \sum_k d^+_k |x_k|$. Then we can estimate the operator norm of the operators $A_j$ from the proof of Theorem \ref{realpositive} as follows:
\begin{eqnarray*}
 \| A_j \mathbf{x} \| &=& \sum_{k} d^+_k \left| \sum_i \Tr(T^i_{jk}) x_i \right| \\
	& \leq & \sum_{k, i} d^+_k  N^i_{jk} |x_i| \\
	& = & \sum_{i,k} d^+_k N^k_{j i^*} |x_i| \\
	& = & d^+_j \sum_i d^+_{i^*} |x_i| \\
	& = & d^+_j \|x\| \, .
\end{eqnarray*}
In the second line we used $\Tr(T^i_{jk}) \leq N^i_{jk}$, in the third line the symmetry $N^i_{jk} = N^k_{ji^*}$, in the fourth line we used the definition of the Frobenius-Perron dimensions, and in the fifth line we used $d^+_{i^*} = d^+_{i}$.
\end{proof}
\begin{cor} In a pseudo-unitary fusion category, the pivotal operators $T^i_{jk}$ are the identity maps. \label{pseudo_unitary}
\end{cor}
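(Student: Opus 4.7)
The plan is to combine the ``twisted homomorphism'' identity of Proposition \ref{homprop} with the Frobenius--Perron recursion \eqref{FPeqn}, exploiting that pseudo-unitarity collapses the two sets of dimensions into one. Concretely, under the hypothesis that $\cat{C}$ is pseudo-unitary, Corollary \ref{char_pseudo_cor} gives $d_i = d^+_i$ for every $i \in I$, so both
\[
 d_j d_k = \sum_i \Tr(T^i_{jk})\, d_i \quad \text{and} \quad d^+_j d^+_k = \sum_i N^i_{jk}\, d^+_i
\]
express the same positive real number. Subtracting and using $d_i = d^+_i > 0$ yields
\[
 \sum_i \bigl( N^i_{jk} - \Tr(T^i_{jk}) \bigr)\, d_i = 0.
\]

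The next step is to observe that each summand is non-negative. Since $T^i_{jk}$ is an involution on $\Hom(X_i, X_j \otimes X_k)$ by Theorem \ref{piv_involution_thm}, its eigenvalues are all $\pm 1$, and the dimension of this space is $N^i_{jk}$ by definition. Hence
\[
 \Tr(T^i_{jk}) \le N^i_{jk},
\]
with equality if and only if every eigenvalue equals $+1$, i.e.\ $T^i_{jk} = \id$.

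Combining the two observations: the sum of non-negative terms $(N^i_{jk} - \Tr(T^i_{jk}))\, d_i$ vanishes and $d_i > 0$, so each term vanishes individually, giving $\Tr(T^i_{jk}) = N^i_{jk}$ and thus $T^i_{jk} = \id$ for all $i,j,k \in I$.

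There is no real obstacle here; everything is assembled from results already in place (Proposition \ref{homprop}, Theorem \ref{piv_involution_thm}, Corollary \ref{char_pseudo_cor}, and positivity of the $d_i$ from Theorem \ref{realpositive}). The only subtlety worth flagging is the implicit use of the \emph{canonical} positive root choice throughout, which is what guarantees both $d_i > 0$ in the final step and the compatibility of the two scalar identities at the start.
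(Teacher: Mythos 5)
Your proof is correct and is essentially the paper's own argument: the paper phrases it as the inequality chain $d_j d_k = \sum_i \Tr(T^i_{jk}) d_i \leq \sum_i N^i_{jk} d_i = d_j d_k$ forcing equality, while you phrase it as a vanishing sum of non-negative terms, but the ingredients (Proposition \ref{homprop}, equation \eqref{FPeqn}, Corollary \ref{char_pseudo_cor}, the involution property from Theorem \ref{piv_involution_thm}, and positivity of the $d_i$) and the logic are identical.
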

\begin{proof} Combining eq. \eqref{FPeqn} with Proposition \ref{homprop} and Corollary \ref{char_pseudo_cor}, we have
\[
d_j d_k = \sum_i \Tr(T^i_{jk}) d_i \leq \sum_i N^i_{jk} d_i = d_j d_k
\]
and hence we must have $T^i_{jk} = \id$.
\end{proof}

\subsection{The pivotal operators as a monoidal functor} \label{piv_monoidal}
We want to precisely relate the pivotal operators with the more traditional notion of the `double dual functor'. We first need to show that the pivotal operators can be viewed as equipping the identity functor with coherence isomorphisms making it into a monoidal functor. 

\begin{lem} \label{extend}Suppose $F,G : \cat{D} \rightarrow \cat{D}'$ are linear functors between semisimple categories. Let $\theta_{i} : F(X_i) \rightarrow G(X_i)$ be a collection of maps, where $X_i$ ranges over the representatives of the simple objects of $\cat{D}$. Then there exists a unique natural transformation $\theta : F \Rightarrow G$ such that $\theta_{X_i} = \theta_i$.
\end{lem}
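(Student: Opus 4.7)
The plan is to build $\theta_V$ for each object $V$ of $\cat{D}$ by choosing a decomposition of $V$ into simples, applying the prescribed maps $\theta_i$ on each simple summand, and then verifying naturality via Schur's lemma. First I would note that since $F$ and $G$ are linear functors between semisimple (in particular additive) categories, they preserve finite direct sums. By semisimplicity of $\cat{D}$, any object $V$ admits a (non-canonical) direct sum decomposition $V \cong \bigoplus_k X_{i_k}$, which amounts to choosing morphisms $\iota_k \colon X_{i_k} \to V$ and $\pi_k \colon V \to X_{i_k}$ satisfying $\pi_k \iota_l = \delta_{kl} \, \id$ and $\sum_k \iota_k \pi_k = \id_V$.

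Having chosen such a decomposition, I define
\[
\theta_V := \sum_k G(\iota_k) \circ \theta_{i_k} \circ F(\pi_k) \colon F(V) \to G(V).
\]
I would then prove naturality: for any morphism $f \colon V \to W$, the equation $\theta_W \circ F(f) = G(f) \circ \theta_V$ must be checked. Inserting the resolutions of the identity on $V$ and $W$, this reduces to verifying, for each pair of simples $X_i$ and $X_j$ and each morphism $g \colon X_i \to X_j$, that $\theta_j \circ F(g) = G(g) \circ \theta_i$. By Schur's lemma (which holds in any $\mathbb{C}$-linear semisimple category with simple unit, or more generally for morphisms between simples in a semisimple category), $g$ is zero if $i \neq j$ and a scalar multiple of $\id_{X_i}$ if $i = j$; in either case the equation holds trivially by linearity of $F$, $G$. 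Naturality, once established, also implies independence of the chosen decomposition: two different decompositions are related by an isomorphism $V \to V$, and naturality applied to it identifies the two resulting formulas for $\theta_V$.

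For uniqueness, suppose $\theta' \colon F \Rightarrow G$ is another natural transformation with $\theta'_{X_i} = \theta_i$. Applying naturality of $\theta'$ to each $\iota_k$ and $\pi_k$ gives $\theta'_V \circ F(\iota_k) = G(\iota_k) \circ \theta_{i_k}$ and $\theta_{i_k} \circ F(\pi_k) = G(\pi_k) \circ \theta'_V$, from which
\[
\theta'_V = \sum_k G(\iota_k \pi_k) \circ \theta'_V = \sum_k G(\iota_k) \circ \theta_{i_k} \circ F(\pi_k) = \theta_V.
\]

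The one step that requires real care, rather than bookkeeping, is the reduction of naturality to the simple case, since it hinges on the fact that linear functors between semisimple categories preserve direct sums and that Schur's lemma applies; everything else is routine linear algebra once this is in place.
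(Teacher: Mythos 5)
Your proposal is correct and takes essentially the same approach as the paper: fix a resolution of the identity of $V$ by simples (your $\iota_k,\pi_k$, the paper's dual pair of bases $e_\alpha, e^\alpha$), define $\theta_V = \sum G(\iota_k)\,\theta_{i_k}\,F(\pi_k)$, and verify naturality and uniqueness by Schur's lemma, which the paper delegates to a reference but you spell out.
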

\begin{proof} Let $A \in \cat{D}$, and for each $i$, choose a basis $\{e_\alpha : X_i \rightarrow A\}$ for $\Hom(X_i, A)$ with corresponding dual basis $\{e^\beta : A \rightarrow X_i\}$. Set  $\theta_A = \sum_{i,\alpha} G(e_\alpha) \circ \theta_i \circ F(e^\alpha)$. The rest of the proof (naturality of $\theta$ and its uniqueness) is the same as \cite[Lemma 3.9]{BBthesis}.
\end{proof}
This allows us to extend the pivotal operators $T^i_{jk}$, initially defined only on the simple objects, to a natural transformation $T \equiv \{T_{A,B} :\ A \otimes B \rightarrow A \otimes B$\} where $A$ and $B$ are arbitrary objects of $\cat{C}$.
\begin{cor} The pivotal operators $T^i_{jk}$ extend uniquely to a natural transformation $T : \otimes \Rightarrow \otimes$ from the tensor functor $\otimes : \cat{C} \boxtimes \cat{C} \rightarrow \cat{C}$ to itself. \label{extend_cor}
\end{cor}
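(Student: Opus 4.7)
The plan is to apply Lemma \ref{extend} to the functor $\otimes : \cat{C} \boxtimes \cat{C} \rightarrow \cat{C}$, viewing $T$ as a natural transformation from $\otimes$ to itself. The simple objects of $\cat{C} \boxtimes \cat{C}$ are exactly the pairs $X_j \boxtimes X_k$ of simple objects of $\cat{C}$, so by Lemma \ref{extend} it suffices to specify, for each such pair, an endomorphism $T_{X_j \boxtimes X_k}$ of the object $X_j \otimes X_k$ in $\cat{C}$. The pivotal operators $T^i_{jk}$, as initially defined, act on the hom-spaces $\Hom(X_i, X_j \otimes X_k)$ rather than on objects, so the first step is to package them into a single endomorphism of $X_j \otimes X_k$.

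To do this, for each $i$ I would fix a trivalent basis $\{e_\alpha : X_i \to X_j \otimes X_k\}$ with dual basis $\{\hat{e}_\alpha : X_j \otimes X_k \to X_i\}$ as in \eqref{dual_basis}, and set
\[
T_{X_j \boxtimes X_k} \; := \; \sum_{i, \alpha} T^i_{jk}(e_\alpha) \circ \hat{e}_\alpha \; : \; X_j \otimes X_k \longrightarrow X_j \otimes X_k .
\]
This is essentially the image of $T^i_{jk}$ under the canonical isomorphism $\Hom(X_i, X_j \otimes X_k) \otimes X_i \cong$ (the $X_i$-isotypic summand of $X_j \otimes X_k$) provided by the resolution of identity \eqref{res_id}. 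Basis-independence is routine: a change of basis $e'_\alpha = \sum_\beta M_{\alpha\beta} e_\beta$ on $\Hom(X_i, X_j \otimes X_k)$ forces $\hat{e}'_\alpha = \sum_\beta (M^{-1})_{\beta\alpha} \hat{e}_\beta$, and the $M$ and $M^{-1}$ cancel in the sum because $T^i_{jk}$ is $\mathbb{C}$-linear.

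Now apply Lemma \ref{extend} with $F = G = \otimes$ and the family $\{T_{X_j \boxtimes X_k}\}$ indexed by the representative simple objects $X_j \boxtimes X_k \in \cat{C} \boxtimes \cat{C}$. This yields a unique natural transformation $T : \otimes \Rightarrow \otimes$ whose value on each simple $X_j \boxtimes X_k$ is $T_{X_j \boxtimes X_k}$. It remains to check that $T$ genuinely extends the pivotal operators in the intended sense, i.e.\ that post-composition with $T_{X_j \boxtimes X_k}$ on a basis vector $e_\alpha \in \Hom(X_i, X_j \otimes X_k)$ recovers $T^i_{jk}(e_\alpha)$; this is immediate from the defining formula together with $\hat{e}_\beta \circ e_\alpha = \delta_{\alpha\beta}\, \id_{X_i}$. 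Uniqueness of $T$ as an extension of the $T^i_{jk}$ is then a direct consequence of the uniqueness clause of Lemma \ref{extend}.

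There is no substantial obstacle here: the whole argument is just the packaging of the hom-space operators into an object-level endomorphism, followed by a direct invocation of the extension lemma for natural transformations between linear functors on semisimple categories. The only point needing verification is basis-independence of the assembly formula, which is standard.
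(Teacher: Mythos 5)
Your proof is correct and takes essentially the same two-step route as the paper: first assemble an object-level endomorphism $T_{j,k}$ of $X_j \otimes X_k$ from the hom-space operators $T^i_{jk}$, then invoke Lemma~\ref{extend} for the functor $\otimes : \cat{C} \boxtimes \cat{C} \to \cat{C}$. The only difference is presentational: the paper justifies the first step by appeal to the Yoneda lemma, whereas you write out the explicit assembly formula $T_{j,k} = \sum_{i,\alpha} T^i_{jk}(e_\alpha) \circ \hat{e}_\alpha$ and check basis-independence directly.
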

\begin{proof}
The pivotal operators $T^i_{jk} : \Hom(X_i, X_j \otimes X_k) \rightarrow \Hom(X_i, X_j \otimes X_k)$ induce maps
\[
T_{j,k} : X_j \otimes X_k \rightarrow X_j \otimes X_k
\]
for each pair of simple objects $X_j$, $X_k$, defined uniquely by the requirement that for each simple object $X_i$, the map on the hom-sets given by post-composing with $T_{j,k}$,
\begin{align*}
 \text{post}(T_{j,k}) : \Hom(X_i, X_j \otimes X_k) & \rightarrow \Hom(X_i, X_j \otimes X_k)  \\
 f & \mapsto T_{j,k} \circ f
\end{align*}
is equal to $T^i_{jk}$. That is, we require $\text{post}(T_{j,k}) = T^i_{j,k}$ as endomorphisms of the vector space $\Hom(X_i, X_j \otimes X_k)$. By the Yoneda lemma, this indeed gives unique well-defined maps $T_{j,k}$. Now apply Lemma \ref{extend} to the case $\cat{D} =\cat{C} \boxtimes \cat{C}$, $\cat{D}' = \cat{C}$, and $F$($=G$) given by the tensor functor $\otimes : \cat{C} \boxtimes \cat{C} \rightarrow \cat{C}$.  
\end{proof}
Recall that a monoidal functor $(F, T, \phi) : \cat{C} \rightarrow \cat{D}$ \label{monoidal_page}  between monoidal categories $\cat{C}$ and $\cat{D}$ consists of a functor $F :\ \cat{C}\ \rightarrow \cat{D}$ together with natural isomorphisms $\{T_{A,B} : F(A) \otimes F(B) \rightarrow F(A \otimes B), \, A, B \in \cat{C}\}$ and $\phi : 1_{\cat{D}} \rightarrow F(1_{\cat{C}})$ satisfying certain coherence equations.
\begin{prop} The pivotal operators $T^i_{jk}$, when extended to a natural transformation $\{T_{A,B} : A \otimes B \rightarrow A \otimes B\}$, obey the coherence isomorphisms making $T := (\id, T, \id) :\ \cat{C} \rightarrow \cat{C}$ a monoidal functor.
\end{prop}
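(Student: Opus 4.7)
The plan is to verify the hexagon coherence axiom together with the (degenerate) unit compatibility axioms for the data $(\id, T, \id)$. Both axioms are equalities between natural transformations of iterated tensor functors $\cat{C}^{\boxtimes n} \to \cat{C}$, so by Lemma \ref{extend} applied in the style of the proof of Corollary \ref{extend_cor}, it suffices to verify them on tuples of simple objects $(X_j, X_k, X_l)$. To check equality of two morphisms $X_j \otimes X_k \otimes X_l \to X_j \otimes X_k \otimes X_l$, I would use the Yoneda-type reduction from the proof of Corollary \ref{extend_cor}: it suffices to check equality after post-composition with an arbitrary basis element $f \in \Hom(X_i, X_j \otimes X_k \otimes X_l)$ for each simple $X_i$, and then to read off the result in a trivalent basis via \eqref{res_id}.

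For the unit axioms, since $F = \id$ and $\phi = \id$, the usual left and right unit compatibilities collapse to the two equations $T_{1, X_k} = \id$ and $T_{X_j, 1} = \id$. By the reduction above this amounts to showing that $T^i_{1,k}$ and $T^i_{j,1}$ act as the identity on the respective one-dimensional hom-spaces, which is immediate from the diagrammatic definition \eqref{defn_piv_operator}: a strand labelled by the unit object $1$ can be inserted or removed freely using the snake equations, its cups and caps being the canonical unit isomorphisms, so wrapping such a strand via the pairing convention has no effect on the morphism. This is the same argument as Lemma \ref{triv_pivs}.

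The substantive content is the hexagon axiom, which, after cancelling $F(a_{A,B,C}) = a_{A,B,C}$ against the associator on the other side (legitimate in the string diagram calculus by Theorem \ref{invariance_rigid}), reads
\[
T_{X_j,\, X_k \otimes X_l} \circ (\id_{X_j} \otimes T_{X_k, X_l}) \;=\; T_{X_j \otimes X_k,\, X_l} \circ (T_{X_j, X_k} \otimes \id_{X_l}).
\]
Post-composing both sides with an arbitrary basis vector $f : X_i \to X_j \otimes X_k \otimes X_l$ and unwinding the definition \eqref{defn_piv_operator}, both sides produce string diagrams in which the three strands $X_j, X_k, X_l$ are wrapped around $f$ via cups and caps placed in the pairing convention; the two diagrams differ only in the order in which the wrappings are executed (first $X_k$ around $X_l$ and then $X_j$ around the pair, versus first $X_j$ around $X_k$ and then the resulting pair around $X_l$). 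These diagrams are rigid-isotopic to one another --- the isotopy slides the cups and caps between adjacent strands --- so by Theorem \ref{invariance_rigid} they represent the same morphism.

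The main obstacle will be to confirm that the pairing convention, extended to the composite objects $X_j \otimes X_k$ and $X_k \otimes X_l$ as described at the end of Section \ref{pairing_sec}, is compatible with the reparenthesization forced by the isotopy, so that the relative normalizations of the paired cups and caps agree on the two sides. This amounts to showing that the pairing convention respects tensor products of simple objects, and is the three-strand analogue of the belt-trick isotopy used in the proof of Theorem \ref{piv_involution_thm}.
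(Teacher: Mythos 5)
Your outline follows the paper's high-level strategy: reduce to representative simple objects via Lemma \ref{extend} (and the Yoneda reduction of Corollary \ref{extend_cor}), dispose of the unit axioms by the snake equations as in Lemma \ref{triv_pivs}, and recognise that the hexagon will follow from a rigid-isotopy argument in the spirit of Theorem \ref{piv_involution_thm}. However, there is a genuine gap exactly where you flag it. By attempting the isotopy directly on the unexpanded diagrams involving the composite objects $X_j \otimes X_k$ and $X_k \otimes X_l$, you must invoke the extension of the pairing convention to non-simple objects, and then show that this extension is compatible with tensoring simple strands --- that wrapping the composite strand $X_j \otimes X_k$ via the pairing convention agrees, up to the isotopy, with wrapping $X_j$ and $X_k$ individually, with all normalizations matching. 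You correctly label this the ``main obstacle'' but leave it unresolved, so as written the proof is incomplete.

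The paper sidesteps this issue entirely. Rather than keeping $X_j \otimes X_k$ as a composite strand, it applies the coherence diagram \eqref{coherence_on_T} to trivalent basis vectors \eqref{basis_choice} in a pivotal basis, which decomposes the intermediate composites into simple summands $X_m$ and $X_n$ and rewrites the coherence condition as the explicit equation \eqref{monoidal_pivotal} in terms of the associator matrix elements $(F^i_{jkl})^{\gamma n \delta}_{\alpha m \beta}$. That equation lives entirely among simple objects, where the pairing convention is unambiguous; inserting the graphical definition \eqref{defn_piv_operator} of the pivotal operators and expanding via \eqref{associator_eqn} then produces two rigidly isotopic diagrams, as in Theorem \ref{piv_involution_thm}. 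The missing step in your argument is therefore: decompose through the trivalent basis \emph{before} invoking the isotopy, not after, so that the extended pairing convention never enters the picture.
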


\begin{proof} By Lemma \ref{extend}, we only need to check the coherence equations on the simple objects. The unit coherence equations are $T^1_{i,1} = T^1_{1,i} = \id$, which is automatically satisfied due to the rigidity equations \eqref{snake_eqns}. The coherence conditions on $T$ are:
\begin{equation} \label{coherence_on_T}
\begin{tikzpicture}[xscale=6, yscale=1.5]
\node (1) at (0,0) {$ (F(X_j) \otimes F(X_k) ) \otimes F(X_l)$};
\node (2) at (0.2,1) {$F(X_j \otimes X_k) \otimes F(X_l)$};
\node (3) at (1.2,1) {$F((X_j \otimes X_k) \otimes X_l)$};
\node (4) at (1.4,0) {$F(X_j \otimes (X_k \otimes X_l))$};
\node (5) at (0.2,-1) {$F(X_j) \otimes (F(X_k) \otimes F(X_l))$};
\node (6) at (1.2,-1) {$F(X_j) \otimes F(X_k \otimes X_l)$};
\draw[->] (1) to node[left] {$T_{X_j, X_k} \otimes \id$} (2); 
\draw[->] (2) to node[above] {$T_{X_j \otimes X_k, X_l}$} (3); 
\draw[->] (3) to node[right] {$F(a_{X_j, X_k, X_l})$} (4); 
\draw[->] (1) to node[left] {$a_{F(X_j), F(X_k), F(X_l)}$} (5); 
\draw[->] (5) to node[below] {$\id \otimes T_{X_k, X_l}$} (6); 
\draw[->] (6) to node[right] {$T_{X_j, X_k \otimes X_l}$} (4); 
\end{tikzpicture} 
\end{equation}
Apply this diagram to the basis vectors \eqref{basis_choice}, working in a pivotal basis to simplify the calculation. It becomes the requirement that
\begin{equation}
\ba \ig{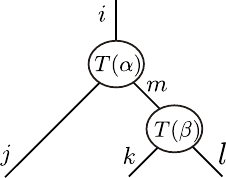} \ea = \sum_{n, \gamma, \delta} (F^i_{jkl})^{\alpha m \beta}_{\gamma n \delta} \, \ba \ig{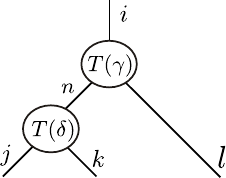} \ea \label{monoidal_pivotal}
\end{equation}
To prove \eqref{monoidal_pivotal}, insert the graphical definition \eqref{defn_piv_operator} of the pivotal operators into \eqref{monoidal_pivotal}, and expand out the left hand side using the associator expansion \eqref{associator_eqn}. As in the proof of Theorem \ref{piv_involution_thm}, we obtain two diagrams which are rigidly isotopic, hence equal.  
\end{proof}
\begin{cor} In a pivotal basis, the pivotal symbols satisfy
\begin{equation} \label{2-cocycle} 
 \epsilon^i_{jk, \alpha} \epsilon^m_{kl, \beta} = \epsilon^i_{nl, \gamma} \epsilon^n_{jk, \delta} \quad \text{whenever } (F^i_{jkl})^{\gamma n \delta}_{\alpha m \beta} \neq 0.
\end{equation}
\end{cor}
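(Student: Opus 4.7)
The plan is to extract \eqref{2-cocycle} from the monoidality coherence identity \eqref{coherence_on_T} for $T$ proved in the previous proposition. The key input is that in a pivotal basis every trivalent vertex $e_\alpha : X_c \to X_a \otimes X_b$ satisfies
\[
T_{X_a, X_b} \circ e_\alpha \;=\; \epsilon^c_{ab, \alpha}\, e_\alpha,
\]
which is just the definition of the pivotal symbols, combined with the fact (from the proof of Corollary \ref{extend_cor}) that the operator $T^c_{ab}$ on $\Hom(X_c, X_a \otimes X_b)$ is post-composition with $T_{X_a, X_b}$.

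First I would set up notation. Write $f_{(\alpha, m, \beta)} := (\id_{X_j} \otimes e_\beta) \circ e_\alpha$ for the basis of $\Hom(X_i, X_j \otimes (X_k \otimes X_l))$ used on the left of \eqref{associator_eqn}, and $g_{(\gamma, n, \delta)} := (e_\delta \otimes \id_{X_l}) \circ e_\gamma$ for the analogous basis of $\Hom(X_i, (X_j \otimes X_k) \otimes X_l)$; then \eqref{associator_eqn} becomes
\[
f_{(\alpha, m, \beta)} \;=\; \sum_{\gamma, n, \delta} (F^i_{jkl})^{\gamma n \delta}_{\alpha m \beta}\; a_{X_j, X_k, X_l} \circ g_{(\gamma, n, \delta)}.
\]

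Next, set $M_L := T_{X_j,\, X_k \otimes X_l} \circ (\id_{X_j} \otimes T_{X_k, X_l})$ and $M_R := T_{X_j \otimes X_k,\, X_l} \circ (T_{X_j, X_k} \otimes \id_{X_l})$, so that the monoidality square \eqref{coherence_on_T} reads $M_L \circ a_{X_j, X_k, X_l} = a_{X_j, X_k, X_l} \circ M_R$. Using the naturality of $T$ (Corollary \ref{extend_cor}) to slide each $T$-factor past the trivalent vertex it meets, a short direct computation shows that $M_L$ acts on $f_{(\alpha, m, \beta)}$, and $M_R$ on $g_{(\gamma, n, \delta)}$, as scalars, each scalar being a product of the two pivotal symbols corresponding respectively to the inner and outer splittings of the chosen basis vector.

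Finally, apply $M_L$ to both sides of the associator expansion displayed above. On the left, $M_L f_{(\alpha, m, \beta)}$ is the direct eigenvalue times $f_{(\alpha, m, \beta)}$, which I re-expand using the same associator expansion; on the right, monoidality converts $M_L \circ a$ into $a \circ M_R$, and then $M_R$ acts diagonally on each $g_{(\gamma, n, \delta)}$. Equating coefficients of the linearly-independent vectors $\{a_{X_j, X_k, X_l} \circ g_{(\gamma, n, \delta)}\}$ forces the difference of the two eigenvalue products to annihilate the $F$-symbol, which gives \eqref{2-cocycle} on the support of $(F^i_{jkl})^{\gamma n \delta}_{\alpha m \beta}$. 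The whole argument requires no new string-diagram manipulation beyond what is already encoded in the monoidality of $T$; the only mild obstacle is keeping the two parenthesization schemes and their indices straight.
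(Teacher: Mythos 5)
Your proposal is correct and follows essentially the same route as the paper: both extract \eqref{2-cocycle} by applying the monoidality coherence \eqref{coherence_on_T} to the basis vectors of \eqref{associator_eqn} and observing that in a pivotal basis each $T$-factor acts as the corresponding sign. The paper simply compresses this into the remark that the already-displayed equation \eqref{monoidal_pivotal} becomes \eqref{2-cocycle} once the $T^i_{jk}$ are replaced by their eigenvalues, whereas you re-derive that displayed equation from the coherence square and then equate coefficients; the content is identical.
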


\begin{proof}
This is precisely what \eqref{monoidal_pivotal} says, in a pivotal basis.
\end{proof}
\begin{rem} The equation \eqref{2-cocycle} satisfied by the $\epsilon^i_{jk, \alpha}$ is formally similar to the equation satisfied by the 2nd Stiefel-Whitney 2-cocycle $w_2(M) \in H^2(M, \mathbb{Z}/2\mathbb{Z})$ of an oriented manifold $M$ in \v{C}ech cohomology \cite{naber-2011}. It can be thought of as a `$\mathbb{Z}/2$ version' of the 2-cocycle condition in Davydov-Yetter cohomology of the fusion category, see Example \ref{2-cocycle-example}. \label{remark_2_cocycle}
\end{rem}

Let us summarize the results of this section explicitly as follows.

\begin{thm}[{cf. \cite[Thm 2.6]{eno02-ofc}}] Every fusion category $\cat{C}$ over $\mathbb{C}$ comes equipped with a canonical monoidal endofunctor $\mathcal{T} : C \rightarrow C$, the pivotal endofunctor, satisfying:
\begin{itemize}
 \item The underlying functor of $\mathcal{T}$ is the identity functor,
 \item The monoidal coherence isomorphisms $T : A \otimes B \rightarrow A \otimes B$ of $\mathcal{T}$ are specified by the pivotal operators $T^i_{jk}$, extended to a natural transformation using Corollary \ref{extend_cor},
 \item $\mathcal{T}^2 = \id$. 
\end{itemize}
 \label{piv_operator_C}
\end{thm}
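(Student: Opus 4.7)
The plan is to assemble the various results proved earlier in Section \ref{stss} into a single consolidated statement about the monoidal endofunctor $\mathcal{T}$. Since the underlying functor is prescribed to be the identity, specifying $\mathcal{T}$ amounts to giving the monoidal coherence data $(T_{A,B}, \phi)$ and checking the monoidal-functor axioms together with the identity $\mathcal{T}^2 = \id$.

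First I would define the coherence data. The family of maps $\{T_{X_j, X_k} : X_j \otimes X_k \to X_j \otimes X_k\}$ on simple objects is produced from the pivotal operators $T^i_{jk}$ via the Yoneda construction in Corollary \ref{extend_cor}, and then extended to a natural transformation $\{T_{A,B} : A \otimes B \to A \otimes B\}$ on all objects by Lemma \ref{extend} applied to $F = G = \otimes : \cat{C} \boxtimes \cat{C} \to \cat{C}$. The unit coherence isomorphism $\phi : 1_{\cat{C}} \to 1_{\cat{C}}$ is taken to be $\id_{1}$.

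Second, I would verify the monoidal-functor coherence axioms. The hexagon (associativity) axiom is exactly equation \eqref{coherence_on_T} and was established in the proposition immediately preceding the theorem. The unit (triangle) coherence reduces, by naturality and semisimplicity (Lemma \ref{extend}), to verifying $T^1_{1,i} = T^1_{i,1} = \id$ on each simple $X_i$, which is Lemma \ref{triv_pivs}. Together this shows $\mathcal{T} = (\id, T, \id)$ is a monoidal endofunctor of $\cat{C}$.

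Third, I would verify $\mathcal{T}^2 = \id$. Since the underlying functor of $\mathcal{T}$ is the identity, the underlying functor of $\mathcal{T}^2$ is also the identity, and the coherence data of $\mathcal{T}^2$ is the pointwise square $T_{A,B} \circ T_{A,B}$. By Theorem \ref{piv_involution_thm}, $(T^i_{jk})^2 = \id$ on every simple hom-space, hence by the Yoneda reasoning of Corollary \ref{extend_cor} we get $(T_{X_j, X_k})^2 = \id_{X_j \otimes X_k}$ on all pairs of simples. The uniqueness part of Lemma \ref{extend} then forces $(T_{A,B})^2 = \id_{A \otimes B}$ on arbitrary objects, so $\mathcal{T}^2 = \id$ as monoidal endofunctors. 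Canonicity of $\mathcal{T}$ follows because every ingredient (paired dimensions, the canonical positive root choice justified by Theorem \ref{realpositive}, and the pivotal operators themselves) is defined without choices.

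The main obstacle, such as it is, is bookkeeping rather than substance: one must check that the extension from simple objects to all objects via Lemma \ref{extend} is compatible with tensor products, so that $T_{A,B}$ assembles into a natural transformation of the tensor bifunctor rather than merely a collection of endomorphisms. This is handled by applying Lemma \ref{extend} to the tensor product functor $\otimes : \cat{C} \boxtimes \cat{C} \to \cat{C}$ rather than to $\cat{C}$ itself, exactly as in Corollary \ref{extend_cor}, and then noting that both the hexagon identity and the involutive identity are equalities of natural transformations which, by semisimplicity, need only be verified on the simple objects of $\cat{C} \boxtimes \cat{C}$.
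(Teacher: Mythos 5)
Your proposal is essentially correct and matches what the paper does: the theorem is presented explicitly as a \emph{summary} of the results already proved in Section~\ref{piv_monoidal} (the paper even introduces it with ``Let us summarize the results of this section explicitly as follows''), so there is no separate proof in the paper to compare against, and your assembly of Corollary~\ref{extend_cor}, the preceding Proposition, and Theorem~\ref{piv_involution_thm} via the uniqueness part of Lemma~\ref{extend} is exactly the right reconstruction. One small slip: you attribute the unit coherence $T_{1,X_i}=T_{X_i,1}=\id$ to Lemma~\ref{triv_pivs}, but that lemma concerns $T^1_{11}$ and $T^1_{ii^*}$, which are different operators; the unit coherence (on the one-dimensional spaces $\Hom(X_i, 1\otimes X_i)$ and $\Hom(X_i, X_i\otimes 1)$) is instead disposed of inside the proof of the Proposition preceding the theorem, where it is noted to follow from the rigidity (snake) equations together with the paper's convention of always choosing the canonical basis element in those hom-spaces. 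This is a misattribution rather than a gap, since the needed fact is established in the paper and your overall argument goes through unchanged.
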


\subsection{The pivotal operators are the double dual functor} \label{pivops}
Our approach to the `double dual functor' on a fusion category $\cat{C}$ been to encode it in a canonical monoidal endofunctor $\mathcal{T} : \cat{C} \rightarrow \cat{C}$, whose underlying functor is simply the identity. The advantage of our approach is that it is canonical, not requiring arbitrary choices of duals for objects. Such fixed initial choices of duals tend to complicate calculations, as the expression for $\omega_{V,W}$ in equation \eqref{defn_omega} below shows.  We now connect our approach to the usual approach, taking more care than is common in the literature. 

We define a {\em system of right duals} on a fusion category $\cat{C}$ as a choice, for every object $V \in \cat{C}$, of a triple
\[
(V^*, \, \eta_V : 1 \rightarrow V^* \otimes V, \, \epsilon_V : V \otimes V^* \rightarrow 1)
\]
which equips $V^*$ as a right dual of $V$. It is then a routine calculation to check that this makes $* : \cat{C} \rightarrow \cat{C}^\text{op}$ into a well-defined functor, which is equipped with canonical isomorphisms $V^* \otimes W^* \rightarrow (W \otimes V)^*$ making it into a monoidal functor. In particular, given such a choice of right duals, we have the associated double dual monoidal endofunctor $**$ of $\cat{C}$, and the following explicit coherence isomorphisms $\omega_{V, W} : V^{**} \otimes W^{**} \rightarrow (V \otimes W)^{**}$, expressed graphically as follows:
\begin{equation} \label{defn_omega}
\omega_{V,W} = \ba \ig{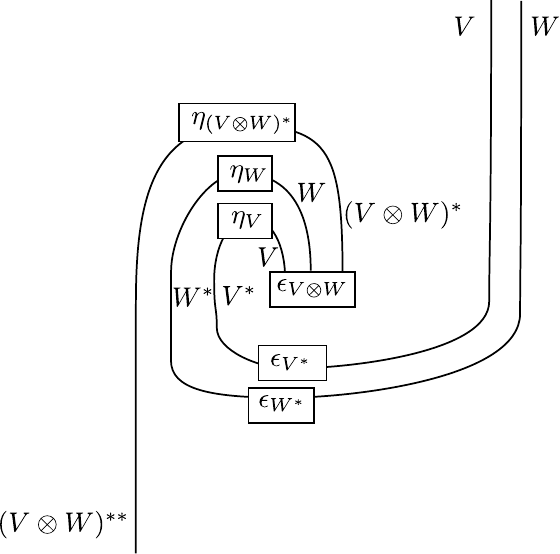} \ea 
\end{equation}
Given any other system of right duals $*'$ on $\cat{C}$, there is a unique monoidal natural isomorphism $\vartheta : * \Rightarrow *'$ such that $\eta_{V^{*'}} = (\vartheta \otimes \id_V) \circ \eta_{V^*}$. Horizontally composing $\vartheta$ with itself, we get a monoidal natural isomorphism $\theta : ** \Rightarrow *' *'$.

\begin{thm} \begin{enumerate} 
\item Given any system of right duals $*$ on a fusion category $\cat{C}$, there is a canonical monoidal natural isomorphism $\beta: \mathcal{T} \Rightarrow **$.
\item These isomorphisms are natural in the following sense: if $*'$ is another system of right duals on $\cat{C}$, then the diagram
\[
\begin{tikzpicture}[scale=2]
 \node (1) at (0,0) {$V$};
 \node (2) at (0, -1) {$V^{**}$};
 \node (3) at (1, -1) {$V^{*' *'}$};
 \draw[->] (1) -- node[left] {$\beta^{(*)}_V$} (2);
 \draw[->] (1) -- node[above right] {$\beta_V^{(*')}$} (3);
 \draw[->] (2) -- node[below] {$\theta_V$} (3); 
\end{tikzpicture}
\]
commutes for all $V \in \cat{C}$.
\end{enumerate}
\end{thm}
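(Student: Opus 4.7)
My plan is to give an explicit graphical construction of $\beta_V$ for simple $V$, extend it to all of $\cat{C}$ via Lemma~\ref{extend}, and verify naturality and monoidality by string-diagram manipulation. For each simple $V$, I would set
\[
 \beta_V \;:=\; (\id_{V^{**}} \otimes e_V)\circ(\eta_{V^*}\otimes \id_V)\colon V \longrightarrow V^{**},
\]
where $\eta_{V^*}\colon 1 \to V^{**}\otimes V^*$ is the right-dual unit supplied by the chosen system $*$, and $e_V\colon V^*\otimes V \to 1$ is the left-dual counit uniquely determined by the pairing convention (relative to the canonical root choice). Diagrammatically $\beta_V$ is a single curl, which uses $\eta_{V^*}$ at one end and the pairing-convention partner of $\eta_V$ at the other. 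It is manifestly a nonzero morphism in the one-dimensional hom-space $\Hom(V,V^{**})$, hence an isomorphism. Lemma~\ref{extend} then extends $\beta$ uniquely to a natural isomorphism $\beta\colon \id \Rightarrow **$ on all of $\cat{C}$.

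The central step is to verify the monoidal coherence
\[
 \omega_{A,B}\circ(\beta_A\otimes \beta_B) \;=\; \beta_{A\otimes B}\circ T_{A,B}.
\]
By naturality and Lemma~\ref{extend} it suffices to check this after precomposing with a basis vector $e_\alpha\in\Hom(X_i,X_j\otimes X_k)$. Expanding the left-hand side using the graphical definition~\eqref{defn_omega} of $\omega_{X_j,X_k}$ together with the formula above for $\beta_{X_j}$ and $\beta_{X_k}$, and expanding the right-hand side using the graphical definition~\eqref{defn_piv_operator} of $T^i_{jk}$ together with the formula for $\beta_{X_j\otimes X_k}$, both sides become concrete string diagrams built from $e_\alpha$, the units $\eta_{X_j^*},\eta_{X_k^*},\eta_{(X_j\otimes X_k)^*}$, and their pairing-convention partners. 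The plan is to show that these two diagrams are rigid-isotopic, in the same spirit as the proof of Theorem~\ref{piv_involution_thm}. The main obstacle, and the step I expect to be hardest, is the bookkeeping: one must unpack the pairing convention on the non-simple object $X_j\otimes X_k$ (using the extension given at the end of Section~\ref{pairing_sec}) and recognize its left-dual counit as built from the pairing-convention partners of $\eta_{X_j}$ and $\eta_{X_k}$ threaded through the right-dual data for $X_j\otimes X_k$. Once this identification is made, the two diagrams collapse to the same picture under rigid isotopy.

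For part~(2), I would write both composites $\theta_V\circ\beta^{(*)}_V$ and $\beta^{(*')}_V$ as string diagrams and compare them directly. The defining relation $\eta_{V^{*'}} = (\vartheta_V\otimes\id_V)\circ\eta_{V^*}$ inserts a copy of $\vartheta_V$ at the $\eta$-end of $\beta^{(*')}_V$, while the uniqueness clause of the pairing convention forces the corresponding $e_V^{(*')}$ to differ from $e_V^{(*)}$ by a compensating application of $\vartheta_V^{-1}$ in the matching place. The two factors of $\vartheta$ thereby introduced are exactly what assembles, under the horizontal-composition description of $\theta$, to $\theta_V$. Because $\vartheta$ is monoidal and both $\beta^{(*)}$ and $\beta^{(*')}$ arise by the same construction applied to their respective duality data, this verification presents no obstacle beyond the one already dealt with in part~(1).
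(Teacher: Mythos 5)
Your approach is essentially the same as the paper's: the paper defines $\beta_V$ for general $V$ directly by the sum formula $\sum_{i,\alpha} e_{i,\alpha}^{**} \circ (\text{curl on } X_i) \circ \hat{e}_{i,\alpha}$ (which for simple $V$ collapses to your single curl), whereas you define the curl on simples and invoke Lemma~\ref{extend}; both then reduce the monoidal coherence to a rigid-isotopy check on representative simples and handle part (ii) by a direct string-diagram comparison. The place you correctly flag as requiring care --- unpacking the pairing convention on the non-simple object $X_j\otimes X_k$ in terms of the partners of $\eta_{X_j}$, $\eta_{X_k}$ --- is the same bookkeeping the paper elides with ``simplify considerably.''
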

\begin{proof}
(i) We define $\beta_V$ by choosing a basis $e_{i, \alpha} : X_i \rightarrow V$ for the hom-sets $\Hom(X_i, V)$ where $i$ ranges over a representative set of simple objects, with corresponding dual basis $\hat{e}_{i, \alpha}$, and then setting 
\begin{equation} \label{defn_beta}
\beta_V := \sum_{i, \alpha} \ba \ig{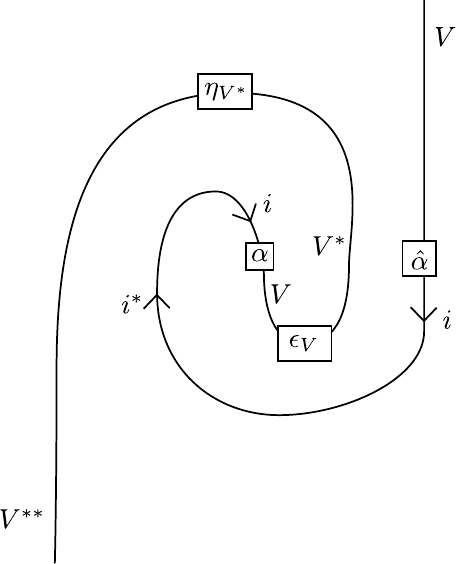} \ea 
\end{equation}
where we have used the partner convention on the $X_i$ strands. It is easy to check that this definition is independent of the basis choice $e_{i, \alpha}$, and that this indeed defines a natural transformation $\beta : \mathcal{T} \Rightarrow **$ of the underlying functors. For $\beta$ to be monoidal, we need to verify that the following coherence diagram commutes:
\begin{equation} \label{comm_check_diagram}
\begin{tikzpicture}[xscale=3, yscale=2]
\node (1) at (0,0) {$V \otimes W$};
\node (2) at (1,0) {$V \otimes W$};
\node (3) at (0, -1) {$(V \otimes W)^{**}$};
\node (4) at (1, -1) {$V^{**} \otimes W^{**}$};
\draw [->] (1) -- node[above] {$T_{V,W}$} (2);
\draw [->] (1) -- node[left] {$\beta_V \otimes \beta_W$} (3);
\draw [->] (3) -- node[below] {$\omega_{V, W}$} (4);
\draw [->] (2) -- node[right] {$\beta_{V \otimes W}$} (4);
\end{tikzpicture}
\end{equation}
Substitute the definition \eqref{defn_omega} of $\omega_{V, W}$ and the definition \eqref{defn_beta} of $\beta_V$ into the equation \eqref{comm_check_diagram}. The left and right hand sides simplify considerably. It is sufficient to check the remaining identity on the representative simple objects $V = X_k$, $W = X_l$, where it is an elementary string diagram verification, best performed in a pivotal basis. The diagram for part (ii) is simpler and can be verified directly.  
\end{proof}
Combining this with Theorem \ref{piv_operator_C}, which says that $\mathcal{T}^2 = \id$, we immediately obtain the following.
\begin{cor}[{cf. \cite[Thm 2.6]{eno02-ofc}}] For any system of right duals $*$ on a fusion category $\cat{C}$, there is a canonical monoidal natural isomorphism $\id \Rightarrow {*}{*}{*}{*}$, which is natural with respect to the choice of system of right duals $*$. \label{explicit_eno_cor}
\end{cor}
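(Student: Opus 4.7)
The plan is to chain together the two main ingredients already established: the monoidal natural isomorphism $\beta : \mathcal{T} \Rightarrow {**}$ constructed in the previous theorem, and the identity $\mathcal{T}^2 = \id$ from Theorem \ref{piv_operator_C}. Since $\beta$ is a monoidal natural isomorphism of monoidal endofunctors of $\cat{C}$, horizontally composing it with itself (i.e.\ applying $**$ to $\beta$ and stacking with $\beta$) produces a monoidal natural isomorphism
\[
\beta_{{**}} \circ \beta \, : \, \mathcal{T}^2 \Rightarrow {*}{*}{*}{*}.
\]
Using $\mathcal{T}^2 = \id$ from Theorem \ref{piv_operator_C}, this is the desired canonical monoidal natural isomorphism $\id \Rightarrow {*}{*}{*}{*}$. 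Concretely, on an object $V$ it is the composite $V \xrightarrow{\beta_V} V^{**} \xrightarrow{\beta_{V^{**}}} V^{****}$.

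The only thing that requires a moment of thought is that this horizontal composite really is monoidal, but this is formal: horizontal composition of monoidal natural transformations between monoidal functors is again a monoidal natural transformation, because monoidal functors and monoidal natural transformations assemble into a 2-category. Equivalently, the relevant hexagon for $\beta_{**} \circ \beta$ is obtained by pasting the hexagon for $\beta$ together with the hexagon for $\beta$ applied under $**$, and using that $**$ preserves the coherence data $\omega$ up to the stated natural isomorphisms. I would not grind through this diagram chase in detail, but I would note that since the equality $\mathcal{T}^2 = \id$ is an equality of monoidal functors (not merely of underlying functors), the identification of $\mathcal{T}^2$ with $\id$ needs no extra data.

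For the naturality with respect to the choice of system of right duals $*$, I would apply part (ii) of the previous theorem twice. Given two systems of right duals $*$ and $*'$, part (ii) gives commutative triangles $\theta_V \circ \beta_V^{(*)} = \beta_V^{(*')}$ relating the two $\beta$'s via the canonical monoidal natural isomorphism $\theta : {**} \Rightarrow {*'}{*'}$. Applying this at $V$ and then at $V^{**}$ (together with naturality of $\theta$) shows that the two induced isomorphisms $\id \Rightarrow {*}{*}{*}{*}$ and $\id \Rightarrow {*'}{*'}{*'}{*'}$ are intertwined by the quadruple-dual version of $\theta$, namely $\theta \ast \theta : {*}{*}{*}{*} \Rightarrow {*'}{*'}{*'}{*'}$. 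I do not expect any real obstacle here; the heart of the work has already been done in the construction of $\beta$ and in the proof that $\mathcal{T}^2 = \id$. The only potential subtlety is bookkeeping with horizontal composition of monoidal natural transformations, which is routine.
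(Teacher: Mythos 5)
Your proposal is correct and takes essentially the same route as the paper, which simply says "Combining this with Theorem \ref{piv_operator_C} ... we immediately obtain the following"; you have just made explicit the horizontal composition $\beta \ast \beta : \mathcal{T}^2 \Rightarrow {*}{*}{*}{*}$ and the identification $\mathcal{T}^2 = \id$ of monoidal functors that the paper leaves implicit. The remarks about 2-categorical formality of horizontal composition and the double application of part (ii) of the preceding theorem for naturality in $*$ are exactly the right bookkeeping.
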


\section{Explicit formula for the pivotal operators} \label{explicit_sec}
In this section we express the pivotal operators $T^i_{jk}$ directly in terms of the associators of the fusion category. More precisely, we introduce the {\em apex-associator monodromy} and the {\em pivotal indicators}, and in Theorem \ref{formula_piv} we express $T^i_{jk}$ as the product of the pivotal indicators with the conjugate of the apex-associator monodromy.

\subsection{Monodromy interpretation of the pivotal operators}
We can write the pivotal operators $T^i_{jk}$ as the conjugate of a product of {\em cyclic operators}, defined as follows. Write $V_{ijk} = \Hom(1, X_i \otimes (X_j \otimes X_k))$.
\begin{defn} In a fusion category, the {\em cyclic operators} $C_{ijk} : V_{ijk} \rightarrow V_{kij}$ are defined as follows, using the pairing convention: \label{cyclic_op_defn}
\[
 \ba \ig{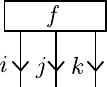} \ea \mapsto \ba \ig{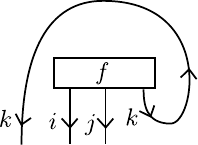} \ea 
\]
The {\em monodromy} of the cyclic operators is the operator $T_{ijk}$ on $V_{ijk}$ given as the composite
\[
  T_{ijk} = V_{ijk} \stackrel{C_{ijk}}{\longrightarrow} V_{kij} \stackrel{C_{kij}}{\longrightarrow} V_{jki}  \stackrel{C_{jki}}{\longrightarrow} V_{ijk} \, .
\]
\end{defn} 
Write $Y$ and $Y^{-1}$ for the `yanking' maps
\[
 Y : \Hom(X_i, X_j \otimes X_k) \rightleftarrows \Hom(1, X_i^* \otimes X_j \otimes X_k)
\]
defined by making some choice of structure maps equipping $X_i^*$ as the right dual of $X_i$. The following lemma is immediate.
\begin{lem} The pivotal operator $T^i_{jk}$ computes as the conjugate of the monodromy of the cyclic operators: \label{piv_op_lem_form}
\[
T^i_{jk} = Y^{-1} T_{i^* j k} Y 
\]
\end{lem}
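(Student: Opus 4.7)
The plan is to verify the identity by an explicit rigid-isotopy argument on string diagrams, tracking how $Y$, the three cyclic operators, and $Y^{-1}$ act on a basis morphism $e_\alpha : X_i \to X_j \otimes X_k$. First I choose right duals for $X_i$, $X_j$ and $X_k$, and compute $Y(e_\alpha) \in V_{i^*jk}$ as the standard bending of $e_\alpha$ whose $X_i$-input becomes an $X_i^*$-strand attached to the unit $\eta_{X_i}:1\to X_i^*\otimes X_i$ at the top.

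Next I apply the three cyclic operators $C_{i^*jk}$, $C_{ki^*j}$, $C_{jki^*}$ in sequence. By Definition \ref{cyclic_op_defn}, each $C_{???}$ inserts, via the pairing convention, a left-dual pairing for the current rightmost strand at the top of the picture and routes that strand over the remaining strands to become the leftmost one. After the composite $T_{i^*jk}$ is carried out, each of the three strands $X_i^*$, $X_j$, $X_k$ has executed one full loop over the top of the diagram and we land back in $V_{i^*jk}$. Applying $Y^{-1}$ then bends the outer $X_i^*$-strand back down to an $X_i$-input, and the snake equations \eqref{snake_eqns} collapse the extra $X_i/X_i^*$ cap--cup pair introduced by the $Y$, $Y^{-1}$ sandwich.

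The crucial observation is that the resulting diagram is rigid-isotopic to the right-hand side of \eqref{defn_piv_operator}: in both pictures one sees the coupon for $e_\alpha$ with its $X_j$ and $X_k$ outputs looped around the top in the characteristic ``belt-trick'' configuration that defines the pivotal operator. By Theorem \ref{invariance_rigid}, rigid-isotopic diagrams evaluate to the same morphism, so the two sides agree on the nose.

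The main (though minor) obstacle will be keeping track of the pairing-convention normalisations: each $C$ inserts a cup--cap pair for the cycled strand according to the pairing convention, and one must check that the pairings internal to the $X_j$ and $X_k$ strands, contributed by consecutive cyclic operators, match as partners and therefore do not contribute any stray fusion-dimension factors. The single uncancelled pairing that survives is the one on the $X_i^*$-strand, and this is precisely the pairing demanded by the definition of $T^i_{jk}$ in \eqref{defn_piv_operator}. Once this bookkeeping is done the equality $T^i_{jk} = Y^{-1}\,T_{i^*jk}\,Y$ is immediate.
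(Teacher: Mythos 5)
The paper itself gives no proof of this lemma --- it is introduced with the remark ``The following lemma is immediate,'' followed only by a comment on well-definedness of the right-hand side. Your proposal is therefore not competing against a written argument; it is an attempt to unpack the immediacy, and the broad strategy you take (apply $Y$, trace the three cyclic operators one at a time, apply $Y^{-1}$, and invoke rigid isotopy via Theorem \ref{invariance_rigid} to identify the resulting picture with the one in \eqref{defn_piv_operator}) is indeed the only reasonable reading of the word ``immediate.''

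One point in your bookkeeping looks off, though, and it is worth flagging. You assert that ``the single uncancelled pairing that survives is the one on the $X_i^*$-strand,'' with the pairings on the $X_j$- and $X_k$-strands cancelling away. But the text immediately following Definition of $T^i_{jk}$ says explicitly that evaluating \eqref{defn_piv_operator} requires arbitrary choices of right duals $(X_i^*,\eta_i,\epsilon_i)$, $(X_j^*,\eta_j,\epsilon_j)$, $(X_k^*,\eta_k,\epsilon_k)$ for \emph{all three} of $X_i,X_j,X_k$, together with partnered left duals via the pairing convention. So the target picture has a paired loop on each of the three strands, not just on $X_i$. In your sandwich $Y^{-1}T_{i^*jk}Y$, each of the three cyclic operators winds exactly one of $X_i^*$, $X_j$, $X_k$ around once, contributing one paired loop per strand; these three loops should persist and match the three loops in the $T^i_{jk}$ picture, rather than two of them vanishing. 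The loop you want to collapse by snake equations is not the $X_j$ or $X_k$ pairing, but the extra right-dual cup and cap that $Y$ and $Y^{-1}$ themselves introduce on the $X_i/X_i^*$ strand, which can be slid into the $X_i$-loop coming from $C_{jki^*}$. With that correction the isotopy argument goes through and matches the paper's intent.
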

Note that the statement of the lemma is well-defined since the right-hand side is independent of the right dual structure maps needed to define $Y$ (the dependency cancels due to the $Y^{-1}$).

%\[
%\ba \ig{}\ \ea \quad = \quad \ba \ig{} \ea \, .
%\]
\begin{rem} In the special case  $i=j=k$, the trace of the cyclic operator $C_{iii}$ is closely related, but not equal, to the 3rd Frobenius-Schur indicator $\nu_3(X_i)$ of $X_i$, as defined in \cite{ng2005higher}. The indicator $\nu_3(X_i)$ needs a pivotal structure $\gamma$ for its definition, whence we will write it as $\nu_3^\gamma(X_i)$, whereas the cyclic operator $C_{iii}$ is well-defined on the underlying fusion category. The 3rd Frobenius-Schur indicator $\nu_3^\gamma(X_i)$ is defined as the trace of an operator whose graphical formula is identical to the inverse of the formula for $C_{iii}$ in Definition \ref{cyclic_op_defn}, except it is to be interpreted using the pivotal structure $\gamma$ instead of the pairing convention. Hence on simple objects $X_i$ the relationship is 
\[
\nu_3^\gamma (X_i) = \gamma_i \Tr(C_{iii})
\]
where $\gamma_i \in U(1)$ are the scalars defining the pivotal structure (see Section \ref{Explicit_equations}). 
 \label{remark_frobenius}
\end{rem}

\subsection{The pivotal indicators \label{piv_ind_sec}} 
In this subsection we compute the paired dimensions explicitly and define the {\em pivotal indicators}. 

Let $\cat{C}$ be a fusion category, with representative simple objects $X_i$, $i \in I$. The right duality induces a map $* : I \rightarrow I$ with $**=\id$. Choose for each $i \in I$ a nonzero map $\eta_i : 1 \rightarrow X_{i^*} \otimes X_i$:
\[
\ig{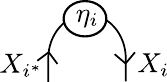}
\]
Define $\hat{\eta_i} : X_{i^*} \otimes X_i \rightarrow 1$ as the linear dual of $\eta_i$, so that
\[
\ig{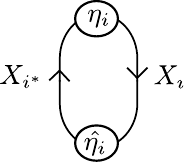} = 1
\]
for all $i \in I$. 
Define the complex numbers $a_i$ as the coefficients appearing in the associator expansion
\begin{equation}
\ig{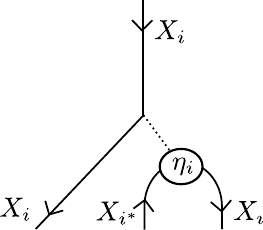} \, = a_i \! \ig{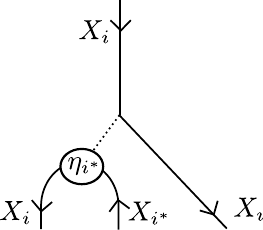} \quad + \text{ other terms}.
\end{equation}
Similarly define $b_i$ as the coefficients appearing in the inverse associator expansion
\begin{equation}
\ig{d3.pdf} \, = \,\, b_i \! \ig{d2.pdf} \quad + \text{ other terms}.
\end{equation}
Clearly, these coefficients are precisely the numbers appearing in
\begin{equation} \label{snake_numbers}
\ig{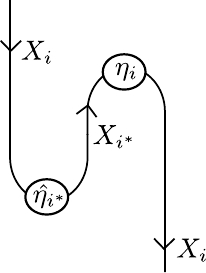} = a_i \, \ig{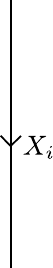} \, , \quad \quad \ig{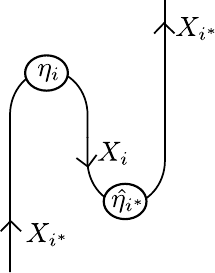} = b_i \, \ig{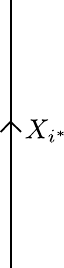} \, .
\end{equation}
In particular, they are nonzero as $\cat{C}$ is rigid (if they were zero, it would be impossible to choose counit maps satisfying the rigidity equations).
\begin{example} In the Yang-Lee category, we read off from \eqref{yl1} that $a_\tau = - \frac{1}{2}(1 + \sqrt{5})$. In category $\mathcal{E}$, we read off from \eqref{cate1} that $a_x = 1 + \sqrt{3}$; similarly $a_y = 1$.
\end{example}
\begin{lem} $b_{i} = a_{i^*}$ for all $i \in I$. \label{bequalsa}
\end{lem}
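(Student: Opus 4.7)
The strategy is to exploit the coordinate-free characterization of $a_i$ and $b_i$ provided by \eqref{snake_numbers}, together with a pictorial symmetry under the involution $i \leftrightarrow i^*$. Rather than manipulating the full F-matrix expansion, I would use \eqref{snake_numbers} as the operative definition: $a_i$ records the deviation from the identity of the zigzag built from the right-dual unit $\eta_i \colon 1 \to X_{i^*} \otimes X_i$ and its linear-dual cap $\hat\eta_i \colon X_{i^*} \otimes X_i \to 1$ bent one way, while $b_i$ records the analogous deviation for the oppositely bent zigzag.

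Applying the substitution $i \mapsto i^*$ turns $\eta_i$ into $\eta_{i^*} \colon 1 \to X_i \otimes X_{i^*}$ and $\hat\eta_i$ into $\hat\eta_{i^*} \colon X_i \otimes X_{i^*} \to 1$, normalized so that $\hat\eta_{i^*} \circ \eta_{i^*} = 1$, and $a_{i^*}$ is the corresponding yanking constant for a strand coloured $X_{i^*}$. The key pictorial observation is that the zigzag diagram computing $b_i$ is the same diagram as the one computing $a_{i^*}$ once strands are re-read: bending an $X_i$-strand the opposite way is the same geometric operation as bending an $X_{i^*}$-strand the usual way, and $\eta$ and $\hat\eta$ swap roles under this orientation reversal.

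The only point needing care---and the main technical step---is reconciling the normalizations. Given $(\eta_i, \hat\eta_i)$ satisfying $\hat\eta_i \circ \eta_i = 1$, one can take $\hat\eta_i$, read in the reverse vertical direction, as a valid choice of $\eta_{i^*}$, and read $\eta_i$ similarly as a $\hat\eta_{i^*}$; the condition $\hat\eta_{i^*} \circ \eta_{i^*} = 1$ then holds automatically. Because the scalars in \eqref{snake_numbers} are invariant under the joint rescaling $\eta \mapsto \lambda\eta$, $\hat\eta \mapsto \lambda^{-1}\hat\eta$ (the same invariance exploited in the definition of the paired dimensions), the value of $a_{i^*}$ is independent of the choice. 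The identity $b_i = a_{i^*}$ then falls out by direct diagram comparison, with no further associator manipulation needed beyond what is already encoded in \eqref{snake_numbers} itself.
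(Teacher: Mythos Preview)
Your proposal has a genuine gap. The core claim---that the $b_i$-zigzag and the $a_{i^*}$-zigzag are ``the same diagram once strands are re-read''---does not hold. Unwinding \eqref{snake_numbers}, the $b_i$-snake is $(\id_{X_i}\otimes \hat\eta_i)\circ(\eta_{i^*}\otimes\id_{X_i}):X_i\to X_i$, while the $a_{i^*}$-snake is $(\hat\eta_i\otimes\id_{X_{i^*}})\circ(\id_{X_{i^*}}\otimes\eta_{i^*}):X_{i^*}\to X_{i^*}$. These use the same cup $\eta_{i^*}$ and cap $\hat\eta_i$, but they are mirror images of one another, not related by any rigid planar isotopy; neither horizontal reflection nor $180^\circ$ rotation is a legal move in a bare rigid monoidal category (Theorem~\ref{invariance_rigid} gives you only coupon-translating isotopies). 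Your suggestion to ``read $\hat\eta_i$ in the reverse vertical direction as a valid choice of $\eta_{i^*}$'' is not a well-defined categorical operation: nothing in the ambient structure converts a cap $X_{i^*}\otimes X_i\to 1$ into a cup $1\to X_i\otimes X_{i^*}$.

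The supporting invariance claim is also false in general. The $a_i$-snake uses $\eta_i$ and $\hat\eta_{i^*}$, which are \emph{not} a dual pair; rescaling $\eta_i\mapsto\lambda\eta_i$ (hence $\hat\eta_i\mapsto\lambda^{-1}\hat\eta_i$) leaves $\hat\eta_{i^*}$ untouched and sends $a_i\mapsto\lambda a_i$. Only when $i=i^*$ does the rescaling cancel, so you cannot pass to a convenient gauge in the non-self-dual case.

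The paper's proof (following Bakalov--Kirillov) sidesteps any such symmetry. It writes down the single morphism $(\id_{X_{i^*}}\otimes\hat\eta_{i^*}\otimes\id_{X_i})\circ(\eta_i\otimes\eta_i):1\to X_{i^*}\otimes X_i$ and evaluates it two ways using only the interchange law: sliding the right-hand $\eta_i$ down first exposes the $a_i$-snake and gives $a_i\,\eta_i$; sliding the left-hand $\eta_i$ down first exposes the $b_{i^*}$-snake and gives $b_{i^*}\,\eta_i$. That interchange-law computation is precisely the content your symmetry argument was trying to shortcut.
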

\begin{proof} Adapted from \cite[Prop 5.3.13]{bk01-ltc}.  Consider evaluating the morphism
\[
\ig{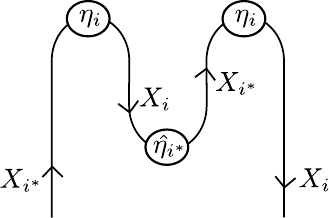}
\]
in two different ways. On the one hand, by dragging the right-most $\eta_i$ downwards, and then using (\ref{snake_numbers}a), it equals $a_i \eta_i$. On the other hand, by dragging the left-most $\eta_i$ downwards, and then using (\ref{snake_numbers}b), it equals $b_{i^*} \eta_i$. Hence $a_i = b_{i^*}$.
\end{proof}
For each $i \in I$, define $\epsilon_i : X_i \otimes X_{i^*} \rightarrow 1$ by
\[
\ig{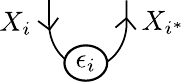} = \frac{1}{a_i} \ig{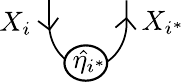} \, .
\]
We have just proved the following:
\begin{lem} For each $i \in I$, $(\eta_i, \epsilon_i)$ satisfy the rigidity equations furnishing $X_{i^*}$ as a right dual of $X_i$. 
\end{lem}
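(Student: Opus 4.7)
The plan is to verify the two snake equations that would exhibit $X_{i^*}$ as a right dual of $X_i$ via the pair $(\eta_i, \epsilon_i)$, namely
\[
(\epsilon_i \otimes \id_{X_i}) \circ (\id_{X_i} \otimes \eta_i) = \id_{X_i}, \qquad (\id_{X_{i^*}} \otimes \epsilon_i) \circ (\eta_i \otimes \id_{X_{i^*}}) = \id_{X_{i^*}},
\]
with associators inserted as needed. The first equation should fall out immediately from the very definition of $\epsilon_i$: substituting $\epsilon_i = \frac{1}{a_i}\hat{\eta}_i$ (in the snake configuration depicted by d6 versus d7) into the left-hand side and applying \eqref{snake_numbers}(a), the prefactor $\frac{1}{a_i}$ cancels the $a_i$ produced by that snake, leaving $\id_{X_i}$. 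This is essentially what the choice of normalization was designed to achieve.

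For the second snake equation, I would substitute the same definition of $\epsilon_i$ and this time apply \eqref{snake_numbers}(b). This produces a factor $\frac{b_i}{a_i} \cdot \id_{X_{i^*}}$. To conclude that this factor is $1$, I would invoke Lemma \ref{bequalsa}: it gives $b_i = a_{i^*}$, and combined with the involution $(i^*)^* = i$ (which lets us also read $a_i = b_{i^*}$) together with the natural symmetry between the two snake configurations for the pair $(\eta_i, \hat{\eta}_i)$, we obtain the equality of snake numbers needed to force $\frac{b_i}{a_i} = 1$.

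The main obstacle is bookkeeping: making sure that the two ``sides'' of the snake for the pair $(\eta_i, \hat{\eta}_i)$ really produce the scalars $a_i$ and $b_i$ as claimed, and that Lemma \ref{bequalsa} applies in precisely the configuration needed so that the two rescalings required (one for each snake) coincide. Once that is pinned down, the lemma is immediate from the definitions and the proof of Lemma \ref{bequalsa} --- consistent with the author's remark that ``we have just proved'' the statement.
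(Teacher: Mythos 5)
Your overall strategy is exactly the paper's implicit proof (which the author signals by writing ``We have just proved the following'' after the definition of $\epsilon_i$): normalize $\epsilon_i$ by $1/a_i$ so that the first snake holds tautologically by \eqref{snake_numbers}(a), and deduce the second snake from Lemma \ref{bequalsa}. So the approach is right.

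However, there is a genuine indexing slip in the second half. When you substitute $\epsilon_i$ into the second snake
\[
(\id_{X_{i^*}} \otimes \epsilon_i)\circ a \circ (\eta_i \otimes \id_{X_{i^*}}),
\]
the relevant instance of \eqref{snake_numbers}(b) is the one for $i^*$ (this is exactly the configuration that appears in the proof of Lemma \ref{bequalsa} when the \emph{left} $\eta_i$ is dragged down: the snake on $X_{i^*}$ produces $b_{i^*}\,\id_{X_{i^*}}$, not $b_i\,\id_{X_{i^*}}$). So the second snake yields the factor $b_{i^*}/a_i$, not $b_i/a_i$. With the correct index the argument closes cleanly: $a_i = b_{i^*}$ is precisely what Lemma \ref{bequalsa} establishes (it is the equality proved in its proof, and it is equivalent to the statement $b_i = a_{i^*}$ after $i\mapsto i^*$), and there is nothing left to hand-wave. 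By contrast, the equality $b_i/a_i = 1$ that you actually wrote down would require $a_i = a_{i^*}$, and that is \emph{false} in general: the paper shows only that a rescaling of the $\eta_i$ (the ``fair basis'') can be chosen to arrange $a_i = a_{i^*}$, so it is not available at this stage. The appeal to ``natural symmetry between the two snake configurations'' therefore cannot rescue $b_i/a_i = 1$; what is true, and what is all that is needed, is $b_{i^*}/a_i = 1$.
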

\begin{cor} The paired dimensions compute as $\displaystyle d_{\{i,i^*\}} = \frac{1}{a_{i^*}a_i}$. \label{paired_cor}
\end{cor}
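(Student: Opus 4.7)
The plan is to evaluate $d_{\{i,i^*\}}$ directly by choosing both the right and the left duality data from the canonical constructions of the preceding lemma, exploiting the rescaling invariance of the definition. For the right duality of $X_i$ with right dual $X_{i^*}$ I would take $(\eta,\epsilon):=(\eta_i,\epsilon_i)$. For the left duality of $X_i$ with left dual $X_{i^*}$ I would apply the same construction at the index $i^*$ in place of $i$: this produces $(\eta_{i^*},\epsilon_{i^*})$, which a priori exhibits $X_i$ as a \emph{right} dual of $X_{i^*}$. The key observation is that this data doubles as a left duality of $X_i$ with left dual $X_{i^*}$ --- the types $\eta_{i^*}\colon\mathbf{1}\to X_i\otimes X_{i^*}$ and $\epsilon_{i^*}\colon X_{i^*}\otimes X_i\to\mathbf{1}$ are exactly what is needed, and the right-rigidity snake equations for the pair $(X_{i^*},X_i)$ are word-for-word the left-rigidity snake equations for the pair $(X_i,X_{i^*})$.

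With these choices the paired dimension becomes the product of two scalars in $\End(\mathbf{1})\cong\mathbb{C}$, namely $(\epsilon_{i^*}\circ\eta_i)\cdot(\epsilon_i\circ\eta_{i^*})$. To evaluate the second factor, I would substitute the defining equation for $\epsilon_i$, which presents it as $\tfrac{1}{a_i}\hat{\eta}_{i^*}$ (both morphisms $X_i\otimes X_{i^*}\to\mathbf{1}$), and then apply the tautology $\hat{\eta}_{i^*}\circ\eta_{i^*}=\id_{\mathbf{1}}$ to obtain $\epsilon_i\circ\eta_{i^*}=1/a_i$. Swapping the roles of $i$ and $i^*$ gives $\epsilon_{i^*}\circ\eta_i=1/a_{i^*}$, and multiplying the two factors yields the claimed formula $1/(a_i a_{i^*})$.

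There is no substantive obstacle: once the correct left-duality data is identified, the argument is an immediate one-line computation. The only care required is in bookkeeping the indices, in particular distinguishing $\hat{\eta}_i\colon X_{i^*}\otimes X_i\to\mathbf{1}$ from $\hat{\eta}_{i^*}\colon X_i\otimes X_{i^*}\to\mathbf{1}$, and likewise $a_i$ from $a_{i^*}$, at each step.
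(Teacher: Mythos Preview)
Your argument is correct and is exactly the computation the paper has in mind: the corollary is stated in the paper without proof because it follows immediately from the preceding lemma by the calculation you give, namely taking $(\eta,\epsilon)=(\eta_i,\epsilon_i)$ and $(n,e)=(\eta_{i^*},\epsilon_{i^*})$ and using $\epsilon_i=\tfrac{1}{a_i}\hat\eta_{i^*}$ together with $\hat\eta_j\circ\eta_j=1$.
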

\begin{lem} There exists a choice of basis of the $\eta_i$ such that $a_i = a_{i^*}$ for all $i \in I$.
\end{lem}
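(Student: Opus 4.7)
The plan is to rescale $\eta_i \mapsto \lambda_i \eta_i$ for suitable nonzero scalars $\lambda_i \in \mathbb{C}^\times$ and to use the resulting freedom to force $a_i = a_{i^*}$. Since $\hat\eta_i$ is defined as the linear dual of $\eta_i$, the rescaling $\eta_i \mapsto \lambda_i\eta_i$ forces $\hat\eta_i \mapsto \lambda_i^{-1}\hat\eta_i$ in order to preserve $\hat\eta_i\circ\eta_i = 1$.

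First I will record how $a_i$ transforms under this rescaling. Inspecting the defining snake equation $\ig{d8.pdf} = a_i\,\ig{d9.pdf}$, one sees that it contains exactly one cap (an instance of $\hat\eta_i$) and one cup (an instance of $\eta_{i^*}$). Consequently
\[
 a_i \;\longmapsto\; \frac{\lambda_{i^*}}{\lambda_i}\, a_i, \qquad a_{i^*} \;\longmapsto\; \frac{\lambda_i}{\lambda_{i^*}}\, a_{i^*}.
\]
As a sanity check, the product $a_i a_{i^*}$ is then invariant under the rescaling, matching the fact from Corollary \ref{paired_cor} that the paired dimension $1/(a_i a_{i^*})$ is independent of the chosen $\eta_i$.

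Second, I will solve for the $\lambda_i$. The requirement $a_i = a_{i^*}$ in the rescaled basis becomes $(\lambda_{i^*}/\lambda_i)^2 = a_{i^*}/a_i$. Partition $I$ into orbits of the duality involution $* : I \to I$. For a self-dual orbit $\{i\}$ with $i = i^*$ the condition is vacuous, so I set $\lambda_i = 1$. For a two-element orbit $\{i, i^*\}$ with $i \neq i^*$, I set $\lambda_i = 1$ and let $\lambda_{i^*}$ be either complex square root of the nonzero ratio $a_{i^*}/a_i$ (nonzero by rigidity, as already noted in the text); for that branch both $a_i$ and $a_{i^*}$ then equal $\sqrt{a_i a_{i^*}}$.

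The only step that requires genuine care is establishing the transformation law of $a_i$ — that is, checking that the snake picture $\ig{d8.pdf}$ really does involve $\eta_{i^*}$ and $\hat\eta_i$ each exactly once (rather than, say, involving $\eta_i$ itself, which would change the exponents). Once this is pinned down, the partition into orbits and the selection of square roots is elementary algebra in $\mathbb{C}^\times$ and poses no difficulty.
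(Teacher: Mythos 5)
Your approach is essentially the same as the paper's: partition $I$ into self-dual indices (where nothing needs doing) and ordered pairs $\{i, i^*\}$, fix $\eta_i$ and rescale $\eta_{i^*}$ by a square root of the appropriate ratio. You also add a useful sanity check (invariance of $a_i a_{i^*}$, consistent with Corollary \ref{paired_cor}) that the paper does not record.

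There is one discrepancy that you should chase down, since it lives precisely in the step you yourself flag as requiring care. The paper sets $\eta'_{i^*} = \lambda_i \eta_{i^*}$ with $\lambda_i^2 = a_i/a_{i^*}$; translated into your notation (where $\lambda_{i^*}$ is the factor scaling $\eta_{i^*}$), this reads $\lambda_{i^*}^2 = a_i/a_{i^*}$, the \emph{reciprocal} of your $\lambda_{i^*}^2 = a_{i^*}/a_i$. Working backwards, the paper's formula is consistent with the transformation law $a_i \mapsto (\lambda_i/\lambda_{i^*})\,a_i$, i.e.\ the snake defining $a_i$ would contain a cup $\eta_i$ and a cap $\hat\eta_{i^*}$ — exactly the alternative you mention ("rather than, say, involving $\eta_i$ itself") and set aside. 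Since the two answers are reciprocal and only one can actually achieve $a'_i = a'_{i^*}$, the cup/cap identification in the snake $\ig{d8.pdf} = a_i\,\ig{d9.pdf}$ needs to be verified explicitly rather than asserted. The existence claim itself is not endangered — the fix, if one is needed, is simply to replace the ratio by its reciprocal — but the stated formula for $\lambda_{i^*}$ is not yet reliable.
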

\begin{proof}
If $X_i$ is self-dual, then $i = i^*$ so we are done. Partition the non self-dual objects into ordered pairs $(X_i, X_{i^*})$. Scale the $\eta_i$ by setting $\eta'_i = \eta_i$ and $\eta'_{i^*} = \lambda_i \eta_{i^*}$ where $\lambda_i$ is a root of $\displaystyle \lambda_i^2 = \frac{a_i}{a_{i^*}}$. In this new basis we have $a'_i = a'_{i^*}$.
\end{proof}
In such a basis we have $a_i^{-2} = d_{\{i, i^*\}}$ for all $i \in I$. By Theorem \ref{realpositive}, the paired dimension $d_{\{i, i^*\}}$ is a positive real number, with the fusion dimension $d_i$ defined as its positive square root. Hence $\displaystyle a_i = \pm \frac{1}{d_i}$. If $X_i$ is not self-dual and $a_i = - d_i$, then we can remove this sign by setting $\eta'_i = -\eta_i$, $\eta'_{i^*} = \eta_{i^*}$, after which we have $a'_i = d_i$. We call such a basis choice $\eta_i \in \Hom(1, X_{i^*} \otimes X_i)$ satisfying $a_i > 0$ for all non self-dual $X_i$ a {\em fair basis}. 

Note that if $X_i$ is self-dual then this sign cannot be removed. We record this as a definition.

\begin{defn} Let $\cat{C}$ be a fusion category over $\mathbb{C}$. The {\em pivotal indicator} $p(X_i)$ of a simple object $X_i$ is defined as follows. If $X_i$ is not self-dual, $p(X_i) = 1$. If $X_i$ is self-dual, then $p(X_i)$ is the sign of the coefficient $a_i$ appearing in the associator expansion
\[
\ig{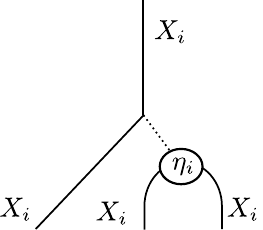} = a_i \! \ig{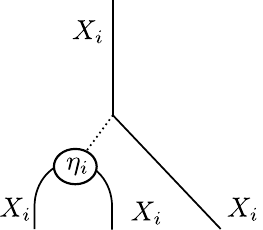} \quad + \text{ other terms}.
\]
where $\eta_i : 1 \rightarrow X_i \otimes X_i$ is some nonzero vector (the number $a_i$ is independent of this choice).
\end{defn}
\begin{example} In the Yang-Lee category, we read off from \eqref{yl1} that $p(\tau) = -1$. In category $\mathcal{E}$, we read off from \eqref{cate1} that $p(x)=1$, similarly $p(y)=1$.
\end{example}

\begin{rem} As in Remark \ref{remark_frobenius}, the pivotal indicator $p(X_i)$ of a self-dual simple object is closely related, but not equal to, the 2nd Frobenius-Schur indicator $\nu_2(X_i)$ of a self-dual simple object, as defined in \cite{ng2005higher}. The former is defined using only the underlying fusion category, while the latter depends on a pivotal structure $\gamma$, whence we can write it as $\nu_2^\gamma (X_i)$. The precise relationship is 
\[
 \nu_2^\gamma (X_i) = \gamma_i p(X_i)
\]
where $\gamma_i = \pm 1$ is the sign associated to the self-dual object $X_i$ in the pivotal structure $\gamma$ (see Section \ref{Explicit_equations}).
\end{rem}

\subsection{Formula for the pivotal operators} \label{Form_piv_op}
We now show how to compute the pivotal operators directly in terms of the associator matrix elements. More precisely, we express them in terms of the {\em apex associator monodromy}.
\begin{defn} The {\em apex associators} are the associator matrix elements where the top strand is the identity\footnote{Thus they form a sharp angle diagrammatically, hence the name.}:
\begin{equation} \label{apex_1}
 \ig{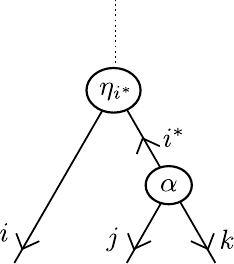} = \sum_\beta (S_{ijk})_{\beta \alpha} \ig{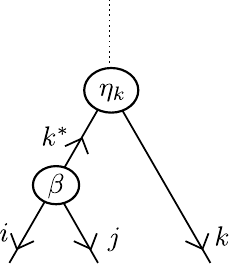}
\end{equation}
That is, $(S_{ijk})_{\beta \alpha} = (F^1_{ijk})_{\eta_{i^*} i^* \alpha}^{\eta_k k^* \beta}$. The {\em apex associator monodromy} $A_{ijk}$ is the product of matrices $S_{jki} \, S_{kij} \, S_{ijk}$.
\end{defn}
\begin{example} In the Yang-Lee category, we read off from \eqref{yl2} that $S_{\tau\tau\tau}=1$. For category $\mathcal{E}$, we read off from \eqref{cate2} and \eqref{cate3} that  
\[
S_{xxx} = \frac{1}{\sqrt{2}} e^{\frac{7 \pi i}{12}} \left( \begin{array}{cc} 1 & -i \\ -i & i \end{array} \right)
\]
and the apex associator monodromy is $A_{xxx} = S_{xxx}^3 = \id$.
\end{example}
\begin{rem} In all examples that the author knows of, the apex associator monodromy in a fair basis is the identity.
\end{rem}

\begin{prop} In a fair basis, the cyclic operators $C_{ijk}$ compute as
\[
C_{ijk} = p_k S_{ijk}
\]
where $p_k$ is the pivotal indicator of $X_k$ and $S_{ijk}$ are the apex associators.
\end{prop}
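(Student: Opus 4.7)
The plan is to first establish the identity on the second natural basis $\phi_\beta$ of $V_{ijk}$, which is indexed by the trivalent vertices $e_\beta\colon X_{k^*} \to X_i \otimes X_j$ appearing in the apex associator \eqref{apex_1}, and then extend to the first basis $\psi_\alpha$ of $V_{ijk}$ via the change-of-basis formula $\psi_\alpha = \sum_\beta (S_{ijk})_{\beta\alpha}\, \phi_\beta$. Writing $\psi'_\beta := (\id_{X_k} \otimes e_\beta) \circ \eta_{k^*}$ for the analogous ``first basis'' of $V_{kij}$ (indexed by the same trivalent vertices as $\phi_\beta$), linearity reduces the full proposition to the pointwise statement $C_{ijk}(\phi_\beta) = p_k\, \psi'_\beta$.

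For this pointwise identity I would unpack Definition \ref{cyclic_op_defn} and observe that bending the rightmost $X_k$ leg of $\phi_\beta$ over the top of the diagram expresses $C_{ijk}(\phi_\beta)$ as the composite $(\id_{X_k} \otimes \widetilde{\phi}_\beta) \circ n_k$, where $n_k\colon 1 \to X_k \otimes X_{k^*}$ is the left-dual unit fixed by the pairing convention and $\widetilde{\phi}_\beta\colon X_{k^*} \to X_i \otimes X_j$ is the closure of $\phi_\beta$ obtained by pairing its $X_k$ leg against a $X_{k^*}$ input with the right-dual counit $\epsilon_k$. Because $\phi_\beta$ was built from $\eta_k$ by applying $e_\beta$ to the $X_{k^*}$ leg, the resulting closure is exactly the snake zig-zag of the rigidity pair $(\eta_k, \epsilon_k)$ and straightens by \eqref{snake_eqns} to give $\widetilde{\phi}_\beta = e_\beta$ on the nose. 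Consequently $C_{ijk}(\phi_\beta) = (\id_{X_k} \otimes e_\beta) \circ n_k$, and the proof is reduced to the scalar identity $n_k = p_k\, \eta_{k^*}$.

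This last identity is the arithmetic heart of the proposition and is where the pivotal indicator enters. On one hand, the pairing convention prescribes $\epsilon_k \circ n_k = d_k$. On the other hand, the definition $\epsilon_k = a_k^{-1}\, \hat{\eta}_{k^*}$ extracted from the display defining $\epsilon_i$ via the diagrams d6 and d7, together with $\hat{\eta}_{k^*} \circ \eta_{k^*} = 1$, gives $\epsilon_k \circ \eta_{k^*} = 1/a_k$, which simplifies to $p_k d_k$ in a fair basis via $a_k = p_k/d_k$ and $p_k^2 = 1$. Writing $n_k = \lambda\, \eta_{k^*}$ and equating $\lambda \cdot p_k d_k = d_k$ yields $\lambda = p_k$, closing the argument.

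The main obstacle is the bookkeeping in the reduction: both $n_k$ and $\eta_{k^*}$ are morphisms $1 \to X_k \otimes X_{k^*}$, yet they play structurally distinct roles (left-dual unit for $X_k$ versus right-dual unit for $X_{k^*}$), and their ratio is only pinned down to the sign $p_k$ in a \emph{fair} basis. Keeping careful track of the orientations of the various structure maps, together with the placement of associators inside $\phi_\beta$ which have been suppressed in the sketch above, is what ultimately makes $p_k$, rather than some other complex phase, appear in the formula.
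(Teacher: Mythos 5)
Your proof is correct and takes essentially the same approach as the paper's. Both arguments combine (a) the apex-associator expansion \eqref{apex_1}, (b) a snake-straightening step coming from the rigidity equations, and (c) the fair-basis normalization $a_k = p_k/d_k$; you organize this pointwise on the basis $\phi_\beta$ and isolate the normalization in the clean scalar identity $n_k = p_k\,\eta_{k^*}$, while the paper closes the loop to read off matrix elements directly and packages the same normalization as the coefficient $d_k a_{k^*} = p_k$ arising from \eqref{snake_numbers} and Lemma \ref{bequalsa}.
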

\begin{proof}
By definition, and using the pairing convention, the bending matrix elements $(C_{ijk})_{\beta \alpha}$ are the coefficients appearing in the expansion
\[
 d_k \, \ig{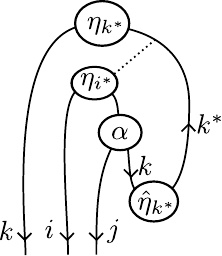} = \sum_\beta (C_{ijk})_{\beta \alpha} \ig{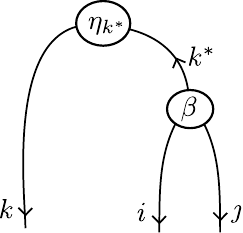} \, .
\]
We can rewrite the left-hand side using the apex associators \eqref{apex_1}:
\[
 \text{LHS} = d_k \sum_\beta (S_{ijk})_{\beta \alpha} \ig{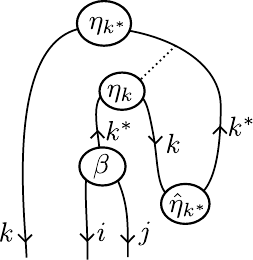} = d_k a_{k^*} \sum_\beta (S_{ijk})_{\beta \alpha} \ig{d16.pdf} 
\]
The second equality uses (\ref{snake_numbers}b) and Lemma \ref{bequalsa}. In a fair basis, $d_k a_{k^*} = p_k$ and we are done.
\end{proof}
We have thus proved the following result.
\begin{cor} In a fair basis, the monodromy of the cyclic operators is the product of the pivotal indicators with the apex associator monodromy:
\begin{equation}
 T_{ijk} = p_i p_j p_k A_{ijk} . 
\end{equation}
\end{cor}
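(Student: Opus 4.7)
The corollary is essentially a direct computation chaining together the preceding proposition with the definition of $T_{ijk}$, so my plan is to simply substitute and read off.

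First, I would recall that by Definition \ref{cyclic_op_defn}, the monodromy operator is the threefold composite
\[
T_{ijk} = C_{jki} \circ C_{kij} \circ C_{ijk},
\]
that is, one applies $C_{ijk} : V_{ijk} \to V_{kij}$, then $C_{kij} : V_{kij} \to V_{jki}$, and finally $C_{jki} : V_{jki} \to V_{ijk}$. Here it is important to keep the indices in the right cyclic order: the subscripts of each cyclic operator are cycled so that the ``active'' strand being pulled over the top is the last one in each triple.

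Next I would invoke the immediately preceding proposition, which states that in a fair basis $C_{abc} = p_c \, S_{abc}$, where $p_c$ is the pivotal indicator of $X_c$ and $S_{abc}$ is the corresponding apex associator matrix. Applying this to each factor gives
\[
C_{ijk} = p_k \, S_{ijk}, \qquad C_{kij} = p_j \, S_{kij}, \qquad C_{jki} = p_i \, S_{jki}.
\]
Substituting into the composite and pulling the scalars $p_i, p_j, p_k$ out front (they are just $\pm 1$, hence central), we obtain
\[
T_{ijk} = p_i p_j p_k \bigl( S_{jki}\, S_{kij}\, S_{ijk}\bigr),
\]
and the parenthesized product is exactly $A_{ijk}$ by its definition, completing the argument.

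There is really no obstacle here; the only thing to be careful about is bookkeeping of the cyclic index shifts, so that the factor of $p_\bullet$ picked up from each $C_{abc}$ lands on the correct simple object. Since the three pivotal indicators $p_i, p_j, p_k$ each appear exactly once (once per cyclic operator, on the ``top'' strand), their product is manifestly cyclically symmetric, as must be the case for the statement to be well-formed.
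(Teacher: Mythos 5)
Your proof is correct and is essentially the argument the paper intends: the paper introduces the corollary with ``We have thus proved the following result,'' leaving exactly the substitution you carried out implicit, namely composing the three cyclic operators, applying $C_{abc} = p_c S_{abc}$ to each, pulling out the scalars $p_i p_j p_k$, and recognizing $S_{jki} S_{kij} S_{ijk}$ as $A_{ijk}$ by definition.
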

Combining this with Lemma \ref{piv_op_lem_form} gives the following explicit formula for the pivotal operators $T^i_{jk}$ in terms of the apex associator monodromy.
\begin{thm} In a fair basis, the pivotal operator $T^i_{jk}$ is equal to the product of the pivotal indicators with the conjugate of the apex-associator monodromy,  \label{formula_piv}
\begin{equation} \label{form_for_piv_ops}
 T^i_{jk} =  p_i p_j p_k Y^{-1} A_{i^* j k} Y \,.
\end{equation}
\end{thm}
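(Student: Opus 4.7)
The plan is essentially to combine the two results that immediately precede the theorem. By Lemma \ref{piv_op_lem_form} applied to the triple $(i^*, j, k)$, we have
\[
T^i_{jk} = Y^{-1} T_{i^* j k} Y,
\]
where $T_{i^* j k}$ is the monodromy of the cyclic operators on $V_{i^* j k} = \Hom(1, X_{i^*} \otimes (X_j \otimes X_k))$. Then we substitute the formula $T_{ijk} = p_i p_j p_k A_{ijk}$ from the Corollary just above the theorem, applied with indices $(i^*, j, k)$, to obtain
\[
T^i_{jk} = Y^{-1} \bigl( p_{i^*} p_j p_k A_{i^* j k}\bigr) Y = p_{i^*} p_j p_k Y^{-1} A_{i^* j k} Y.
\]

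To match the statement of the theorem it remains to observe that $p_{i^*} = p_i$. This is immediate from the definition of the pivotal indicator: if $X_i$ is self-dual then $i^* = i$ so the equality is trivial, and if $X_i$ is not self-dual then $X_{i^*}$ is also not self-dual, so $p(X_i) = p(X_{i^*}) = 1$ by convention. Scalars commute through $Y^{-1}$ and $Y$, so the equality
\[
T^i_{jk} = p_i p_j p_k Y^{-1} A_{i^* j k} Y
\]
follows. As noted after the statement of Lemma \ref{piv_op_lem_form}, the right-hand side is well-defined (independent of the choice of right dual structure maps used to define $Y$) because the dependence cancels between $Y$ and $Y^{-1}$, and $A_{i^* j k}$ is expressed purely in terms of associator matrix elements in the fair basis.

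There is no substantive obstacle: the entire content of the theorem has been assembled in the preceding Lemma and Corollary, and the proof amounts to no more than composing these two statements and invoking the trivial symmetry $p_i = p_{i^*}$ of the pivotal indicators. The one thing worth being careful about is bookkeeping of the index $i^*$ versus $i$ in the monodromy factor, which accounts for the appearance of $A_{i^* j k}$ rather than $A_{ijk}$ on the right-hand side.
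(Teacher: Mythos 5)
Your proof is correct and takes essentially the same route as the paper, which also obtains the theorem immediately by combining Lemma \ref{piv_op_lem_form} with the preceding Corollary. Your explicit check that $p_{i^*} = p_i$ (trivial when $X_i$ is self-dual, and both equal to $1$ otherwise) is a detail the paper leaves implicit, and it is the right bookkeeping observation needed to pass from $p_{i^*}p_jp_k$ to $p_ip_jp_k$.
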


\begin{rem} The above formula \eqref{form_for_piv_ops} refines a formula of Wang \cite[Prop 4.17]{wang2010-topological}, which treats the multiplicity-free case and does not include the pivotal indicators. 
\end{rem}
\begin{example} In the Yang-Lee category, $T_{\tau \tau \tau} = (-1)^3 = -1$. In category $\mathcal{E}$, all the pivotal operators are the identity. 
\end{example}

\section{Pivotal structures} \label{piv_struc_sec}
In this section we characterize pivotal structures as solutions of an explicit set of equations over the complex numbers. We also define fusion homomorphisms, and discuss the sphericalization of a fusion category. 

\subsection{Explicit equations for pivotal structures} \label{Explicit_equations} 

In Section \ref{piv_monoidal} we showed that, by using the pairing convention, every fusion category over $\mathbb{C}$ comes equipped with a canonical monoidal endofunctor $T : \cat{C} \rightarrow \cat{C}$, the pivotal endofunctor, whose underlying functor is simply the identity, with the pivotal operators $T^i_{jk}$ supplying the coherence isomorphisms.

On the other hand, a {\em pivotal structure} on a fusion category is usually defined (see eg. \cite{eno02-ofc, bw99-sc}) by making choices of right duals $(V^*, \eta_V, \epsilon_V)$ for every object $V$ as in Section \ref{pivops}, giving rise to a monoidal double dual functor $** : \cat{C} \rightarrow \cat{C}$, and then declaring that a pivotal structure is a monoidal natural isomorphism $\gamma : \id \Rightarrow **$.  In Section \ref{pivops} we showed that there is a canonical monoidal natural isomorphism $\mathcal{T} \cong **$. Thus, to avoid fixing choices of duals, in this paper we adopt the following cleaner definition. 

\begin{defn} A pivotal structure on a fusion category over $\mathbb{C}$ is a monoidal natural isomorphism $\gamma: \id \Rightarrow \mathcal{T}$, where $\mathcal{T}$ is the canonical pivotal endofunctor of the category.
\end{defn}
For an alternative formulation in terms of {\em even-handed structures}, see \cite{BBthesis}. Under this definition, quantum dimensions are defined as follows. 
\begin{defn} Let $\gamma$ be a pivotal structure on a fusion category. The {\em quantum dimension} of an object $V$ with respect to $\gamma$ is defined as
\[
 \dim_\gamma (V) = \ig{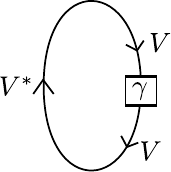}
\]
where the pairing convention has been used. The pivotal structure $\gamma$ is {\em spherical} when $\dim_\gamma (V) = \dim_\gamma (V^*)$ for all objects $V$.
\end{defn}
The following is clear.
\begin{lem} The quantum dimension of a simple object $X_i$ with respect to a pivotal structure $\gamma$ computes as $\dim_\gamma(X_i) = \gamma_i d_i$ where $d_i$ is the fusion dimension of $X_i$. \label{dimension_simple}
\end{lem}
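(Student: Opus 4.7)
The plan is to reduce the diagram defining $\dim_\gamma(X_i)$ to a plain loop with a scalar factor, then invoke the pairing convention. Here is how I would proceed.

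First, since $X_i$ is simple and $\gamma$ is a natural transformation whose target functor $\mathcal{T}$ has underlying identity, the component $\gamma_{X_i} : X_i \to X_i$ is an endomorphism of a simple object. By Schur's lemma (in the semisimple linear setting) it must be a scalar multiple of the identity; I would take this as the \emph{definition} of the number $\gamma_i \in \mathbb{C}^\times$, i.e.\ $\gamma_{X_i} = \gamma_i\, \id_{X_i}$. Thus in the language of equation \eqref{extract_scalar}, $\langle \gamma_{X_i} \rangle = \gamma_i$.

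Next, I would substitute $\gamma_{X_i} = \gamma_i \id_{X_i}$ into the diagram that defines $\dim_\gamma(X_i)$ and pull the scalar $\gamma_i$ outside the string diagram by linearity of composition. What remains is the bare closed loop on the strand labelled $X_i$, evaluated using the pairing convention, which pairs the right-dual structure maps with the left-dual structure maps correlated by the rule $\ba \ig{e288.pdf} \ea = d_i$. In other words, the remaining loop is precisely the one used to normalise the pairing convention, and so it evaluates to the fusion dimension $d_i$ (positive square root of $d_{\{i,i^*\}}$, as fixed by Theorem~\ref{realpositive}).

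Putting these two observations together yields $\dim_\gamma(X_i) = \gamma_i \, d_i$, as claimed. There is essentially no obstacle here: the lemma is a one-line computation given the pairing convention and the extraction-of-scalar trick \eqref{loop_scalar}. The only point to take a bit of care about is consistency of orientation of the closing loop, but since $d_i = d_{i^*}$ (by definition of a root choice), the direction in which the loop is closed does not matter, so the formula is unambiguous.
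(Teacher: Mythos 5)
Your proof is correct and matches the argument the paper has in mind: the paper only remarks ``The following is clear'' and gives no written proof, so your one-line argument --- extract the scalar $\gamma_i$ from the simple endomorphism $\gamma_{X_i}$, then observe that the remaining bare loop is exactly the normalising loop of the pairing convention and hence evaluates to $d_i$ --- is precisely the justification the paper leaves implicit. Your side remark about orientation of the closed loop (using $d_i = d_{i^*}$) is also the same observation the paper makes at equation \eqref{loop_scalar}, so there is nothing further to add.
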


Under these conventions, we have the following.

\begin{thm} A pivotal structure on fusion category $\cat{C}$ with representative simple objects $X_i$, $i \in I$ corresponds to a collection of numbers $\gamma_i \in U(1)$, $i \in I$ satisfying
\begin{equation} \label{piveqn2}
 \gamma_j \gamma_k \id = \gamma_i T^i_{jk} \quad \text{whenever $X_i$ is a summand of $X_j \otimes X_k$} 
\end{equation}
where the $T^i_{jk}$ are the pivotal operators of the fusion category. Moreover, the pivotal structure is spherical precisely when $\gamma_i = \pm 1$ for all $i \in I$. \label{piv_structure_thm}
\end{thm}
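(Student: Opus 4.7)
My plan is to unpack what a monoidal natural isomorphism $\gamma : \id \Rightarrow \mathcal{T}$ (with $\mathcal{T}$ the canonical pivotal endofunctor of Theorem \ref{piv_operator_C}) amounts to at the level of simple objects. By Schur's lemma each component is a scalar, $\gamma_{X_i} = \gamma_i \id_{X_i}$ for some $\gamma_i \in \mathbb{C}^\ast$, and the unit coherence of $\gamma$ forces $\gamma_1 = 1$. To derive \eqref{piveqn2}, I would combine naturality with monoidality: picking a summand inclusion $e_\alpha : X_i \to X_j \otimes X_k$, naturality of $\gamma$ on $e_\alpha$ (using that $\mathcal{T}$'s underlying functor is the identity) gives $\gamma_{X_j \otimes X_k} \circ e_\alpha = \gamma_i e_\alpha$, while monoidality of $\gamma : (\id, \id) \Rightarrow (\id, T)$ reads $\gamma_{X_j \otimes X_k} = \gamma_j \gamma_k T_{X_j, X_k}$. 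Combining and using the Yoneda identification $\mathrm{post}(T_{X_j, X_k}) = T^i_{jk}$ from Corollary \ref{extend_cor} yields $\gamma_i \id = \gamma_j \gamma_k T^i_{jk}$ on $\Hom(X_i, X_j \otimes X_k)$. Since $T^i_{jk}$ is an involution (Theorem \ref{piv_involution_thm}), applying it to both sides gives the stated form $\gamma_j \gamma_k \id = \gamma_i T^i_{jk}$.

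For the converse, I would define $\gamma_{X_i} := \gamma_i \id_{X_i}$ on representative simples and extend uniquely to a natural transformation $\gamma : \id \Rightarrow \id$ via Lemma \ref{extend}. Both sides of the monoidal coherence square are natural transformations of the tensor functor $\otimes : \cat{C} \boxtimes \cat{C} \to \cat{C}$, so by a second application of Lemma \ref{extend} it suffices to check the coherence equation $\gamma_{X_j \otimes X_k} = \gamma_j \gamma_k T_{X_j, X_k}$ on pairs of simples; post-composing with each $e_\alpha$ and using Yoneda, this reduces exactly to the hypothesis \eqref{piveqn2}. The $U(1)$ condition follows from the fact that \eqref{piveqn2} forces $|\gamma_j||\gamma_k| = |\gamma_i|$ whenever $N^i_{jk} > 0$, so $|\gamma|$ is constant on grading classes and descends to a group homomorphism from the finite universal grading group of $\cat{C}$ into $\mathbb{R}^+$, which is necessarily trivial.

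For the sphericity criterion, I would use Lemma \ref{dimension_simple} ($\dim_\gamma(X_i) = \gamma_i d_i$) together with the relation $\gamma_i \gamma_{i^\ast} = 1$, which follows by specializing \eqref{piveqn2} to $(i, j, k) = (1, m, m^\ast)$ and invoking Lemma \ref{triv_pivs} ($T^1_{m m^\ast} = \id$). Since $d_i = d_{i^\ast}$, the condition $\dim_\gamma(X_i) = \dim_\gamma(X_{i^\ast})$ becomes $\gamma_i = \gamma_i^{-1}$, i.e., $\gamma_i \in \{\pm 1\}$, and this extends to all $V \in \cat{C}$ by linearity over simple decompositions. The main obstacle I anticipate is the converse direction's coherence verification: one must carefully apply Lemma \ref{extend} to each side of the full coherence square as a natural transformation of functors $\cat{C} \boxtimes \cat{C} \to \cat{C}$, rather than piecewise on hom-spaces, in order to reduce the check to the data encoded in \eqref{piveqn2}.
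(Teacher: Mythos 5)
Your argument is correct and, in the main, mirrors the paper's proof: reduce the monoidal natural isomorphism to its components on simples via Lemma \ref{extend}, identify the coherence square with \eqref{piveqn2} using the Yoneda identification $\mathrm{post}(T_{j,k}) = T^i_{jk}$, and deduce sphericity from Lemma \ref{dimension_simple} together with $\gamma_i \gamma_{i^*} = 1$ (both you and the paper get this from $T^1_{ii^*} = \id$, Lemma \ref{triv_pivs}). The one place you genuinely diverge is the $U(1)$ step. You observe that \eqref{piveqn2} forces $|\gamma_j||\gamma_k| = |\gamma_i|$ whenever $N^i_{jk} > 0$, so $i \mapsto |\gamma_i|$ factors through the universal grading group, which you then cite as finite. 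That works, but it imports a theorem about universal gradings that is nowhere developed in this paper. The paper's own argument is more self-contained and quicker: since $T^i_{jk}$ is an involution with eigenvalues $\pm 1$, \eqref{piveqn2} gives $\gamma_j\gamma_k = \pm\gamma_i$ for all $j,k$ and any simple summand $X_i$ of $X_j \otimes X_k$, so the finite set $\{\pm\gamma_i\}_{i\in I}$ is closed under products and (via duals and $\gamma_1 = 1$) under inverses, hence is a finite subgroup of $\mathbb{C}^\times$; every element of such a subgroup is a root of unity, so $\gamma_i \in U(1)$. If you want your proof to stand alone in the context of this paper, swap in that elementary closure argument; otherwise your route is fine, just less economical.
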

\begin{proof}
By Lemma \ref{extend}, $\gamma$ is uniquely determined by its components $\gamma_i = \gamma_{X_i}$ on the representative simple objects $X_i$. The coherence equation for being a monoidal natural isomorphism reduces to the scalars $\gamma_i \in \mathbb{C}^\times$ obeying \eqref{piveqn2}. Given a solution to \eqref{piveqn2}, the collection $\{ \pm \gamma_i\}$ forms a finite subgroup of $\mathbb{C}^\times$ and hence $\gamma_i \in U(1)$.  Since $T^1_{11} = \id$ from Lemma \ref{triv_pivs}, we have $\gamma_1 = 1$, and similarly since $T^1_{ii^*} = \id$, we have $\gamma_i = \overline{\gamma_{i^*}}$ for all $i \in I$. From Lemma \ref{dimension_simple}, if $\gamma$ is spherical, then $\gamma_i d_{i^*} = \gamma_{i^*} d_{i^*}$ whence $\gamma_i = \gamma_{i^*}$ since $d_{i^*} = d_i$. Hence $\gamma_i = \pm 1$.  
\end{proof}
\begin{cor}[{cf. \cite[Prop 8.23]{eno02-ofc}}] A pseudo-unitary fusion category admits a canonical spherical structure. \label{pseudo_cor}
\end{cor}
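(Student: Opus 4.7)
The plan is to combine Corollary \ref{pseudo_unitary} (pivotal operators are the identity) with Theorem \ref{piv_structure_thm} (explicit equations characterizing pivotal structures). Since pseudo-unitarity forces all the $T^i_{jk}$ to be identity maps, the system \eqref{piveqn2} collapses into something almost trivial.

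First I would recall that, by the remark following Theorem \ref{realpositive}, every pseudo-unitary fusion category comes with the canonical root choice $d_i = \sqrt{d_{\{i,i^*\}}} > 0$. With respect to this root choice, Corollary \ref{pseudo_unitary} tells us that $T^i_{jk} = \id$ for all triples $i, j, k \in I$. Then Theorem \ref{piv_structure_thm} says that a pivotal structure on $\cat{C}$ is a collection $\gamma_i \in U(1)$ satisfying
\[
 \gamma_j \gamma_k = \gamma_i \qquad \text{whenever $X_i$ is a summand of } X_j \otimes X_k.
\]

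Next I would propose the canonical candidate $\gamma_i := 1$ for every $i \in I$. This trivially satisfies the equation above, so by Theorem \ref{piv_structure_thm} it defines a pivotal structure. Since each $\gamma_i$ is $+1$ (in particular $\pm 1$), the final clause of Theorem \ref{piv_structure_thm} says this pivotal structure is spherical. The quantum dimensions coming from $\gamma$ are then $\dim_\gamma(X_i) = \gamma_i d_i = d_i$ by Lemma \ref{dimension_simple}, recovering the fusion (equivalently, Frobenius--Perron) dimensions, consistent with the usual definition of pseudo-unitarity.

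The only thing that could require comment is the word \emph{canonical}: one must note that the construction relies on no arbitrary choices beyond the already-canonical positive root choice on a fusion category over $\mathbb{C}$, and that the identity assignment $\gamma_i = 1$ is manifestly distinguished among solutions of the reduced equations. There is no real obstacle here --- the entire force of the statement has already been packaged into Corollary \ref{pseudo_unitary} and Theorem \ref{piv_structure_thm}.
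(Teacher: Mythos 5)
Your proof is correct and follows precisely the route the paper intends: the paper's one-line proof "Follows from Corollary \ref{pseudo_unitary}" implicitly packages exactly the reasoning you spell out, namely that $T^i_{jk} = \id$ trivializes the system \eqref{piveqn2} so that $\gamma_i = 1$ is a solution, and the final clause of Theorem \ref{piv_structure_thm} certifies sphericality since $\gamma_i = \pm 1$. Your remarks on canonicity and on the quantum dimensions recovering the fusion (equivalently Frobenius--Perron) dimensions are also correct, the latter by Corollary \ref{char_pseudo_cor}.
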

\begin{proof}
Follows from Corollary \ref{pseudo_unitary}.
\end{proof}

\begin{defn} We say that a fusion category $\cat{C}$ over $\mathbb{C}$ is {\em orientable} if the pivotal symbols $\epsilon^i_{jk, \alpha}$ do not depend on $\alpha$, that is, $T^i_{jk} = \epsilon^i_{jk} \id$ for some signs $\epsilon^i_{jk} = \pm 1$.
\end{defn}
Clearly we have the following.
\begin{lem} If a fusion category is not orientable, then it does not admit a pivotal structure. 
\end{lem}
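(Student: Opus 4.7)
The plan is to prove the contrapositive: if $\cat{C}$ admits a pivotal structure, then $\cat{C}$ is orientable. This follows almost immediately by inspection of the defining equation \eqref{piveqn2} for a pivotal structure, together with the fact that $T^i_{jk}$ is an involution.

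First I would invoke Theorem \ref{piv_structure_thm}: a pivotal structure on $\cat{C}$ corresponds to a collection of scalars $\gamma_i \in U(1)$ satisfying
\[
  \gamma_j \gamma_k \, \id = \gamma_i \, T^i_{jk}
\]
whenever $X_i$ is a summand of $X_j \otimes X_k$. Rearranging, this forces
\[
  T^i_{jk} = \frac{\gamma_j \gamma_k}{\gamma_i} \, \id.
\]
In other words, $T^i_{jk}$ is a scalar multiple of the identity on $\Hom(X_i, X_j \otimes X_k)$ whenever this space is nonzero (the case where the hom-space is zero is vacuous). Hence every eigenvalue $\epsilon^i_{jk,\alpha}$ equals the common scalar $\gamma_j \gamma_k / \gamma_i$, so $\epsilon^i_{jk,\alpha}$ is independent of $\alpha$.

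To conclude orientability, I only need to note that $\epsilon^i_{jk,\alpha} \in \{+1,-1\}$ by Theorem \ref{piv_involution_thm}, since $T^i_{jk}$ is an involution. Thus the common scalar is automatically a sign, and setting $\epsilon^i_{jk} := \gamma_j \gamma_k / \gamma_i \in \{\pm 1\}$ gives $T^i_{jk} = \epsilon^i_{jk} \, \id$, which is exactly the definition of orientability. The contrapositive then yields the lemma: a non-orientable fusion category admits no pivotal structure. There is no real obstacle here — the entire argument is a direct reading of \eqref{piveqn2} modulo the involution property of the pivotal operators.
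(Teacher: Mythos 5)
Your proof is correct and is precisely the argument the paper treats as immediate (it states the lemma with only the preamble ``Clearly we have the following''). Taking the contrapositive and reading $T^i_{jk} = (\gamma_j\gamma_k/\gamma_i)\,\id$ directly off equation \eqref{piveqn2}, then using the involution property to force the scalar into $\{\pm 1\}$, is exactly the intended reasoning.
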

Recall from Theorem \ref{formula_piv} that the pivotal operators can be expressed in terms of the apex associator monodromy operators.
\begin{prop} If the apex associator monodromy $A_{ijk} = \id$ for all $i,j,k \in I$, then the fusion category admits a canonical spherical structure.
\end{prop}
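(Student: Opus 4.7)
The plan is to combine Theorem \ref{formula_piv} with Theorem \ref{piv_structure_thm}. Working in a fair basis, the hypothesis $A_{ijk} = \id$ will collapse the formula
\[
T^i_{jk} = p_i p_j p_k \, Y^{-1} A_{i^* j k} Y
\]
to the scalar operator $T^i_{jk} = p_i p_j p_k \, \id$. In particular, the category becomes orientable, with pivotal symbols $\epsilon^i_{jk,\alpha} = p_i p_j p_k$ independent of the trivalent multiplicity index $\alpha$.

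The candidate pivotal structure will be $\gamma_i := p(X_i) \in \{\pm 1\}$ for each $i \in I$. I then need to verify the coherence equation \eqref{piveqn2}, $\gamma_j \gamma_k = \gamma_i \, \epsilon^i_{jk}$, which in our setting reads $p_j p_k = p_i \cdot p_i p_j p_k$ and is automatic from $p_i^2 = 1$. Theorem \ref{piv_structure_thm} then delivers a bona fide pivotal structure, and since each $\gamma_i$ lies in $\{\pm 1\}$ this structure is automatically spherical by the last clause of the same theorem. Canonicity is immediate because each pivotal indicator $p(X_i)$ is an intrinsic sign attached to $X_i$ by the fusion category itself, independent of any auxiliary choices of duals or basis.

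The main (minor) obstacle I anticipate is verifying that the candidate $\gamma_i = p_i$ is compatible with the implicit normalisation conditions $\gamma_1 = 1$ and $\gamma_i = \overline{\gamma_{i^*}}$ extracted inside the proof of Theorem \ref{piv_structure_thm}. These reduce respectively to $p_1 = 1$, which follows from the triviality of the unit associator, and $p_i = p_{i^*}$, which is trivial when $X_i$ is self-dual and follows from the convention $p_i = 1 = p_{i^*}$ on non-self-dual simples. Once these sanity checks are in place, the statement follows by direct substitution into Theorems \ref{formula_piv} and \ref{piv_structure_thm}.
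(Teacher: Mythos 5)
Your proposal is correct and follows the same approach as the paper's own (much terser) proof: both apply Theorem \ref{formula_piv} to reduce $T^i_{jk}$ to the scalar $p_i p_j p_k$ and then observe that $\gamma_i := p_i$ solves \eqref{piveqn2}. Your extra sanity checks ($p_1 = 1$, $p_i = p_{i^*}$) are harmless but not strictly needed, since the paper's Theorem \ref{piv_structure_thm} derives $\gamma_1 = 1$ and $\gamma_i = \overline{\gamma_{i^*}}$ as \emph{consequences} of a solution to \eqref{piveqn2} rather than as additional constraints to impose.
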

\begin{proof} If $A_{ijk} = \id$, then from Theorem \ref{formula_piv}, 
\[
T^i_{jk} = p_i p_j p_k \id
\]
where the $p_i = \pm 1$ are the pivotal indicators. Hence setting $\gamma_i = p_i$ will solve \eqref{piveqn2}.
\end{proof}

\subsection{Fusion homomorphisms}
We write $[\cat{C}]$ for the Grothendieck ring of a fusion category. The following property of quantum dimensions is well-known.
\begin{lem} Given a pivotal structure $\gamma$ on a fusion category $\cat{C}$, the quantum dimension map $\dim_{\gamma} \colon [\cat{C}] \rightarrow
\mathbb{C}$ which sends $[V] \mapsto \dim_\gamma (V)$ satisfies the following:
 \begin{itemize}
  \item It is a ring homomorphism,
  \item $\dim_\gamma [X_i] \dim_\gamma [X_i^*] = d_{\{i,i^*\}}$ for all
simple objects $X_i$.
 \end{itemize}
\end{lem}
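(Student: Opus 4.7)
The plan is to verify each bulleted property in turn, using the structural results already established, with the second bullet reducing to a direct calculation and the first bullet following from general abstract properties of quantum traces attached to a monoidal natural transformation into the pivotal endofunctor $\mathcal{T}$.

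For the second bullet, I would invoke Lemma \ref{dimension_simple} to rewrite $\dim_\gamma(X_i)\dim_\gamma(X_i^*) = \gamma_i \gamma_{i^*} d_i d_{i^*}$. Then I would observe two facts. First, $d_{i^*} = d_i$ by the symmetry of the canonical root choice, so $d_i d_{i^*} = d_i^2 = d_{\{i, i^*\}}$ by Definition \ref{fusion_dim}. Second, $\gamma_i \gamma_{i^*} = 1$: this follows from evaluating the pivotal coherence equation \eqref{piveqn2} at the triple $(i, j, k) = (1, i, i^*)$ and using Lemma \ref{triv_pivs}, which gives $T^1_{i i^*} = \id$ and $\gamma_1 = 1$. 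Combining these gives the claimed identity.

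For the first bullet, I would proceed in the standard way for quantum traces. Additivity is immediate from the definition: if $V = V_1 \oplus V_2$, then the cup–cap loop closure $\dim_\gamma(V)$ splits as a sum over the two summands using the resolution of the identity \eqref{res_id}, and $\dim_\gamma$ sends the class of the unit object $1$ to $1$ because both $\gamma_1 = 1$ and the loop on the unit evaluates to $1$ in the pairing convention (using that $d_1 = 1$). The essential content is multiplicativity: for objects $V, W$, one has to show $\dim_\gamma(V \otimes W) = \dim_\gamma(V)\dim_\gamma(W)$. The idea is to write $\dim_\gamma(V \otimes W)$ as a single diagram in which the identity on $V \otimes W$ is closed off with a nested pair of cup–cap loops (one for $V$ and one for $W$ on the inside), with $\gamma_{V \otimes W}$ inserted at the appropriate spot; using the monoidal coherence $\gamma_{V \otimes W} = T_{V,W} \circ (\gamma_V \otimes \gamma_W)$ and the pairing convention extended to arbitrary objects (as described at the end of Section \ref{pairing_sec}), the diagram separates into two disjoint planar loops, one contributing $\dim_\gamma(V)$ and the other $\dim_\gamma(W)$.

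The main obstacle will be the multiplicativity step, because it depends on being careful that the pairing convention for non-simple objects $V, W, V \otimes W$ correctly factorizes the nested loop diagram into two independent loops after absorbing the coherence isomorphism $T_{V, W}$. I would discharge this by reducing to simple objects: expand $V = \bigoplus_{i, \alpha} X_i$ and $W = \bigoplus_{j, \beta} X_j$ using additivity, so that it suffices to show $\dim_\gamma(X_i \otimes X_j) = \dim_\gamma(X_i) \dim_\gamma(X_j)$ for simple objects, where the pairing convention is manifest. For this, insert the resolution of the identity \eqref{res_id} on the middle strand, use the coherence equation \eqref{piveqn2} in the form $\gamma_i \gamma_j = \gamma_k \epsilon^k_{ij,\alpha}$ on each summand $X_k \subset X_i \otimes X_j$, and apply Lemma \ref{discomb} together with Lemma \ref{dimension_simple}; the sum telescopes via Proposition \ref{homprop} to $\gamma_i \gamma_j d_i d_j = \dim_\gamma(X_i)\dim_\gamma(X_j)$, as required.
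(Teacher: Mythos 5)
The paper records this lemma as ``well-known'' and supplies no proof, so there is no in-paper argument to compare against; you are filling that gap, and your argument is correct. The second bullet checks out cleanly: Lemma \ref{dimension_simple} gives $\dim_\gamma(X_i)\dim_\gamma(X_{i^*})=\gamma_i\gamma_{i^*}d_i d_{i^*}$, and then $\gamma_i\gamma_{i^*}=1$ follows from \eqref{piveqn2} at $(1,i,i^*)$ using $T^1_{ii^*}=\id$ (Lemma \ref{triv_pivs}) and $\gamma_1=1$, while $d_i d_{i^*}=d_i^2=d_{\{i,i^*\}}$.

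For the first bullet, the reduction of multiplicativity to simple objects is the right move and the algebra closes: additivity and Lemma \ref{dimension_simple} give $\dim_\gamma(X_i\otimes X_j)=\sum_{k,\alpha}\gamma_k d_k$, the coherence \eqref{piveqn2} read in a pivotal basis gives $\gamma_k=\gamma_i\gamma_j\epsilon^k_{ij,\alpha}$ (using that each $\epsilon^k_{ij,\alpha}$ is its own inverse), and substituting yields $\gamma_i\gamma_j\sum_{k,\alpha}\epsilon^k_{ij,\alpha}d_k=\gamma_i\gamma_j d_i d_j$ by Proposition \ref{homprop}. Your first sketch of multiplicativity (nesting loops on arbitrary $V,W$ and absorbing $\gamma_{V\otimes W}=T_{V,W}\circ(\gamma_V\otimes\gamma_W)$) would require a genuine check that the pairing convention on the non-simple object $V\otimes W$ factorizes the nested diagram into two independent loops; you correctly flag this and the reduction to simple objects, where the pairing convention is explicit, is the cleaner route.
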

This motivates the following definition.
\begin{defn} Let $\cat{C}$ be a fusion category. We call a function $f \colon [\cat{C}] \rightarrow \mathbb{C}$ a {\em fusion homomorphism} if it is a ring homomorphism and if $f[X_i]
f[X_i^*] = d_{\{i,i^*\}}$ for all simple objects $X_i$. 
\end{defn}
For a fusion category $\cat{C}$, it is interesting to consider whether the injective map
\[
 \text{Pivotal structures}(\cat{C}) \rightarrow \{\text{Fusion homomorphisms $f : [\cat{C}] \rightarrow \mathbb{C}$}\}
\]
is also surjective. The following case is instructive. Fix a finite group $G$.
\begin{prop} The pivotal structures on $\Rep(G)$ are in 1-1 correspondence
with the fusion homomorphisms from $[\Rep(G)]$ to $\mathbb{C}$.
\end{prop}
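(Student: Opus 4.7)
The plan is to show that both pivotal structures on $\Rep(G)$ and fusion homomorphisms $[\Rep(G)] \to \mathbb{C}$ are in natural bijection with the center $Z(G)$, and then to verify that the map $\gamma \mapsto \dim_\gamma$ is exactly the composite of these two bijections.

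First I would set up. Since $\Rep(G)$ is pseudo-unitary (the identity $\sum_i (\dim X_i)^2 = |G|$ agrees with the global dimension computed using the canonical spherical structure), Corollary \ref{pseudo_unitary} gives $T^i_{jk} = \id$ and fusion dimensions coincide with ordinary dimensions, $d_V = \dim V$. By Theorem \ref{piv_structure_thm}, a pivotal structure is then a family of scalars $\gamma_V \in U(1)$, one per isomorphism class of simple object, satisfying $\gamma_V \gamma_W = \gamma_U$ whenever $U$ is an irreducible summand of $V \otimes W$.

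Second, I would identify fusion homomorphisms with central elements. Via the character map, $[\Rep(G)] \otimes_\mathbb{Z} \mathbb{C}$ is the algebra of class functions on $G$ under pointwise multiplication, whose $\mathbb{C}$-algebra homomorphisms to $\mathbb{C}$ are precisely the evaluations at conjugacy classes, $[V] \mapsto \chi_V(g)$. The fusion homomorphism condition $f[V] f[V^*] = (\dim V)^2$, combined with $\chi_{V^*}(g) = \overline{\chi_V(g)}$, becomes $|\chi_V(g)| = \dim V$ for every irreducible $V$. By the sharp Cauchy--Schwarz bound $|\chi_V(g)| \leq \dim V$, with equality iff $g$ acts as a scalar on $V$, this forces $g$ to act as a scalar on every irreducible, i.e.\ $g \in Z(G)$. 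Hence fusion homomorphisms are in bijection with $Z(G)$, via $g \mapsto \bigl([V] \mapsto \chi_V(g)\bigr)$.

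Third, I would construct the inverse of $\gamma \mapsto \dim_\gamma$. Starting from $g \in Z(G)$, set $\gamma_V := \chi_V(g)/\dim V$; each $\gamma_V$ lies in $U(1)$ since $g$ acts as a unitary scalar on $V$. The pivotal relation $\gamma_V \gamma_W = \gamma_U$ for each summand $U \subset V \otimes W$ follows because $g$ acts on $V \otimes W$ as $\gamma_V \gamma_W \cdot \id$ and on each irreducible summand $U$ as $\gamma_U \cdot \id$; by Schur these must coincide. Theorem \ref{piv_structure_thm} then gives a pivotal structure whose quantum dimension function, by Lemma \ref{dimension_simple}, sends $[V] \mapsto \gamma_V \dim V = \chi_V(g)$, reproducing the prescribed fusion homomorphism. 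Injectivity of $\gamma \mapsto \dim_\gamma$ is immediate since $\gamma_V = \dim_\gamma(V)/\dim V$, completing the bijection.

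The main obstacle is the second step: extracting a concrete central element from an abstract fusion homomorphism rests on the sharp form of $|\chi_V(g)| \leq \dim V$ together with the fact that scalar action on every irreducible characterises membership in $Z(G)$. Once this is in place the remaining verifications are routine properties of central elements acting on tensor products.
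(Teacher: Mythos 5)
Your argument is correct, but it takes a genuinely different route from the paper. The paper proceeds by a chain of abstract bijections: pivotal structures form a torsor over $\Aut_\otimes(\id)$, using the fact that $\Rep(G)$ carries a canonical pivotal structure; $\Aut_\otimes(\id_{\Rep(G)}) \cong Z(G)$ by a theorem of M\"{u}ger; $Z(G) = \{g : |\chi_{V_i}(g)| = \dim V_i \text{ for all irreducibles}\}$; and that set is identified with fusion homomorphisms by the same character-theoretic analysis you give. You bypass the torsor argument and M\"{u}ger's external theorem entirely, instead leaning on the paper's own internal machinery: pseudo-unitarity of $\Rep(G)$ makes the pivotal operators trivial (Corollary \ref{pseudo_unitary}), so Theorem \ref{piv_structure_thm} reduces pivotal structures to $U(1)$-valued functions on $\mathrm{Irr}(G)$ satisfying $\gamma_V\gamma_W = \gamma_U$ for every irreducible summand $U \subseteq V\otimes W$, and you then match this concrete description directly against the concrete description of fusion homomorphisms via central elements. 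Both routes funnel through $Z(G)$ and both rest on the sharp bound $|\chi_V(g)| \le \dim V$ with equality iff $g$ acts as a scalar; but yours is more self-contained with respect to the paper's toolkit, and it produces an explicit two-sided inverse $g \mapsto (\chi_V(g)/\dim V)_V$ of the map $\gamma \mapsto \dim_\gamma$, which the paper leaves implicit. One cosmetic point: the bound $|\chi_V(g)| \le \dim V$ with equality iff $g$ acts as a scalar is a triangle-inequality/convexity statement about sums of roots of unity (since $g$ has finite order), not Cauchy--Schwarz in the usual sense; the conclusion is the same, but the label is slightly off.
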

\begin{proof} We have
 \begin{align*}
  \text{Pivotal structures}(\Rep(G)) & \cong \Aut_\otimes(\id) \\
   & \cong Z(G) \\
   & = \{ g \in G \colon |\Tr_{V_i} (g) | = \dim V_i \text{ for all
irreducibles $V_i$}\} \\
   & \cong \{\text{Fusion homomorphisms } f \colon [\Rep(G)] \rightarrow
\mathbb{C} \}.
  \end{align*}
The first isomorphism uses the fact the fact that $\Rep(G)$ comes with a canonical pivotal structure, and that in general the set of pivotal structures is a torsor for $\Aut_\otimes (\id_C)$. The second isomorphism is a result
of M\"{u}ger \cite{muger2004center}. The equality in the third
line is a basic result of representation theory \cite[Cor 2.28]{isaacs1976-character}. The final isomorphism uses two facts. Firstly,
every ring homomorphism
 \[
  [\Rep(G)] \rightarrow \mathbb{C}
 \]
must take the form $V \mapsto \Tr_V(g)$ for some fixed $g \in G$ ---
because a character like this certainly {\em is} a ring homomorphism,
and there are as many such distinct characters as there are conjugacy
classes in the group, which must exhaust all the ring homomorphisms
since $[\Rep(G)]_\mathbb{C}$ is isomorphic to the space of functions
on the conjugacy classes. Secondly, in $\Rep(G)$ the paired dimensions
$d_{\{i,i^*\}}$ are just $\dim(V_i)^2$, so that a fusion homomorphism
must satisfy $| f(V_i) | = \dim V_i$.
\end{proof}

\subsection{Sphericalization of a fusion category}
We have seen in Corollary \ref{piv_operator_C}  that every fusion category $\cat{C}$ over $\mathbb{C}$ comes equipped with a canonical monoidal action of $\mathbb{Z} / 2 \mathbb{Z}$. The `equivariantization' with respect to this action defines a new fusion category admitting a canonical spherical structure. 

Recall (see eg. \cite{nikshych2008non}) that a monoidal action of a group $G$ on a monoidal category $\cat{C}$ is a monoidal functor $F : BG \rightarrow  \Aut_\otimes(\cat{C})$, where $BG$ is the group $G$\ thought of as a one-object category, and $\Aut_\otimes (\cat{C})$ is the monoidal category of monoidal endofunctors of $\cat{C}$. 

\begin{defn} The {\em equivariantization} $\cat{C}^G$ is the monoidal  category defined as follows:
\begin{itemize}
\item An object of $\cat{C}^G$ consists of an object $X \in \cat{C}$ together with isomorphisms $s_g : F_g(X) \rightarrow X$ for each $g \in G$ such that the diagram
\begin{equation} \label{Gequivob}
\ba
\begin{tikzpicture}[xscale=3, yscale=1.4]
\node (1) at (0,0) {$F_g F_h(X)$};
\node (2) at (1,0) {$F_g(X)$};
\node (3) at (0,-1) {$F_{gh} (X)$};
\node (4) at (1,-1) {$X$};
\draw[->] (1) -- node[above] {$F_g(s_h)$} (2);
\draw[->] (2) -- node[right] {$s_g$} (4);
\draw[->] (3) -- node[below] {$s_{gh}$} (4);
\draw[->] (1) -- node[left] {$\gamma(g,h)_X$} (3);
\end{tikzpicture}
\ea
\end{equation}
commutes for each $g,h \in G$.
\item A morphism in $\cat{C}^G$ is a morphism $f : X \rightarrow Y$ in $C$ satisfying 
\begin{equation} \label{morphismCG}
f \circ s^X_g = s^y_g \circ F_g(f)
\end{equation}
 for each $g \in G$. 
\item The tensor product is defined by $(X, s^X) \otimes (Y, s^Y) = (X \otimes Y, s^{X\otimes Y})$ where $s^{X \otimes Y}_g$ is the composite
\begin{equation}
\ba \begin{tikzpicture}[xscale=3.6]
 \node (1) at (0,0) {$F_g(X \otimes Y)$};
 \node (2) at (1,0) {$F_g(X) \otimes F_g(Y)$};
 \node (3) at (2,0) {$X \otimes Y$}; 
 \draw (1) -- node[above]{$(T^g)^{-1}_{X,Y}$} (2);
 \draw (2) -- node[above] {$s^X_g \otimes s^Y_g$} (3);
  \end{tikzpicture} \ea \, 
\end{equation}
where $T^g_{X,Y} : F_g(X) \otimes F_g(Y) \rightarrow F_g(X \otimes Y)$ are the coherence isomorphisms equipping $F_g$ as a monoidal functor.
\end{itemize}
\end{defn}

\begin{defn} The {\em sphericalization} $\tilde{\cat{C}}$ of a fusion category
$\cat{C}$ is its equivariantization with respect to the canonical $\mathbb{Z}
/ 2 \mathbb{Z}$ action on it.
\end{defn}
 
The advantage of our approach is that the action of $\mathbb{Z} / 2 \mathbb{Z}$ is especially simple, as the group acts by identity functors, with the monoidal coherence isomorphisms encoded in the pivotal operators $T^i_{jk}$.

\begin{lem} The sphericalization $\tilde{\cat{C}}$ of $\cat{C}$ has simple objects $X_i^{s_i}$ where $X_i$ is a simple object in $C$, and $s_i = \pm 1$. The fusion hom-vector spaces in $\tilde{\cat{C}}$ compute in terms of the decompositions of the fusion hom-sets in $\cat{C}$ (see \eqref{decomp_eqn}) as
\begin{equation} \label{fusion_hom_sets}
 \Hom_{\tilde{C}} (X_i^{s_i}, X_j^{s_j} \otimes X_k^{s_k}) = \Hom_C (X_i, X_j \otimes X_k)_{s_i s_j s_k} \, .
\end{equation}
The associator in $\tilde{\cat{C}}$ is the pullback of the associator in $\cat{C}$. The  forgetful tensor functor $F : \tilde{\cat{C}} \rightarrow \cat{C}$ sends $X_i^\sigma \mapsto X_i$. The pivotal symbols $\tilde{\epsilon}$ of $\tilde{\cat{C}}$ compute in terms of the pivotal symbols $\epsilon$ of $\cat{C}$ as 
\begin{equation} \label{epsilon_tilde}
 \tilde{\epsilon}^{(i, s_i)}_{(j, s_j) (k, s_k)} = s_i s_j s_k \, .
\end{equation}
\end{lem}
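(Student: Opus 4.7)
The plan is to work through the five claims in order, with the main content being the identification of the equivariant hom-spaces. Throughout I will use that the $\mathbb{Z}/2\mathbb{Z}$-action on $\cat{C}$ is by the pivotal endofunctor $\mathcal{T}$, whose underlying functor is literally the identity and whose square is strictly the identity monoidal endofunctor (Theorem \ref{piv_operator_C}), so the coherence $\gamma(\sigma,\sigma)$ in diagram \eqref{Gequivob} may be taken to be the identity.

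First I would identify the simple objects. An object of $\tilde{\cat{C}}$ over a simple $X_i \in \cat{C}$ consists of an isomorphism $s_\sigma : \mathcal{T}(X_i) = X_i \to X_i$ satisfying $s_\sigma \circ \mathcal{T}(s_\sigma) = \id$. Since $\End(X_i) = \mathbb{C}$ and $\mathcal{T}$ acts as the identity on endomorphisms of simples, this forces $s_\sigma = \pm \id$, so the simple objects are $X_i^{+}$ and $X_i^{-}$ (their simplicity in $\tilde{\cat{C}}$ follows because equivariant endomorphisms are one-dimensional). Next, for the hom-spaces, by definition of the tensor product in $\tilde{\cat{C}}$ the equivariant structure on $(X_j, s_j) \otimes (X_k, s_k)$ is $(s_j s_k)\, T_{j,k}^{-1} = s_j s_k\, T_{j,k}$ (as $T_{j,k}$ is an involution by Theorem \ref{piv_involution_thm}). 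The equivariance condition \eqref{morphismCG} on $f : X_i \to X_j \otimes X_k$ then reads $s_i f = s_j s_k \, T_{j,k} \circ f$, and since post-composition with $T_{j,k}$ is the pivotal operator $T^i_{jk}$, this is exactly $T^i_{jk}(f) = s_i s_j s_k f$. This gives \eqref{fusion_hom_sets} upon recalling the decomposition \eqref{decomp_eqn}.

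The third and fourth claims are essentially bookkeeping. The associator in $\tilde{\cat{C}}$ is inherited from $\cat{C}$ because the monoidality of $\mathcal{T}$ ensures that the associator of $\cat{C}$ respects the equivariant structures built from $T_{j,k}$, so no modification is required; the forgetful functor $(X_i, s_i) \mapsto X_i$ is then evidently monoidal.

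For the fifth claim, the point is to compute the pivotal operator $\tilde T^{(i,s_i)}_{(j,s_j)(k,s_k)}$ inside $\tilde{\cat{C}}$ and identify it with scalar multiplication by $s_i s_j s_k$. To do this I first verify that duals lift canonically: $(X_i, s_i)$ has dual $(X_{i^*}, s_i)$, because the unit $\eta_i : 1 \to X_{i^*} \otimes X_i$ must live in the $s_1 s_? s_i$-eigenspace of $T^1_{i^* i}$, which is the identity by Lemma \ref{triv_pivs}, forcing $s_? = s_i$ (using $s_1 = +1$). Consequently the paired dimensions in $\tilde{\cat{C}}$ equal those of $\cat{C}$, so $\tilde d_{(i,s_i)} = d_i$ and the pairing convention in $\tilde{\cat{C}}$ reproduces exactly the pairing convention in $\cat{C}$ on the underlying morphisms. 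Therefore the graphical definition \eqref{defn_piv_operator} of the pivotal operator, applied inside $\tilde{\cat{C}}$ to an equivariant morphism $f$, produces the same morphism as $T^i_{jk}(f)$ in $\cat{C}$; by Step 2 this equals $s_i s_j s_k f$, yielding \eqref{epsilon_tilde}. The main subtle point is the last step: one must be confident that passing from $\cat{C}$ to $\tilde{\cat{C}}$ does not alter the value of the diagrammatic pivotal-operator expression, which is the reason for first pinning down the duals and fusion dimensions in $\tilde{\cat{C}}$.
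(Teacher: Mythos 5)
Your proof is correct and follows the same overall route as the paper: identify equivariant structures on simples via $s_\sigma^2 = \id$, unwind the equivariance condition \eqref{morphismCG} for a trivalent morphism into the eigenvalue equation $T^i_{jk}(f) = s_i s_j s_k f$, and read off both the hom-space decomposition and the pivotal symbols. Where you go beyond the paper's own (rather terse) proof is in the fifth claim: the paper asserts that the equivariance diagram ``gives \eqref{epsilon_tilde}'' without spelling out why the pivotal operator of $\tilde{\cat{C}}$ — defined via the pairing convention \emph{of} $\tilde{\cat{C}}$ — agrees with the restriction of the pivotal operator of $\cat{C}$. You fill this gap by first showing that the right dual of $(X_i, s_i)$ lifts canonically to $(X_{i^*}, s_i)$ (using $T^1_{i^*i} = \id$ from Lemma \ref{triv_pivs}), whence the paired dimensions and hence the canonical root choice agree, so the two pairing conventions coincide on the underlying morphisms. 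The paper handles the fusion-dimension matching separately in Lemma \ref{sphericalization_dim}, which actually appears after this Lemma in the text, so your self-contained argument repairs a mild gap in the logical ordering. Both approaches are sound, but yours is the more careful and reader-friendly version of the same proof.
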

\begin{proof} Write $\mathbb{Z} / 2 \mathbb{Z} = \{1, -1\}$. The action of $\mathbb{Z}/2\mathbb{Z}$ on $C$ is strict as a group action, so $\gamma=\id$ in \eqref{Gequivob}. For a simple object $X_i$, write $s_i = s^{X_i}_{-1}$. Then \eqref{Gequivob} becomes $s_i^2 = 1$ so $s_i = \pm 1$. Let $e_\alpha : X_i \rightarrow X_j \otimes X_k$ be a pivotal basis. From \eqref{morphismCG}, $e_\alpha \in \Hom(X_i^{s_i}, \, X_j^{s_j} \otimes X_k^{s_k})$ if and only if 
\begin{equation} \label{fusion_2}
\begin{tz}[xscale=3, yscale=1.3]
 \node (1) at (0,0) {$X_i$};
 \node (2) at (1,0) {$X_j \otimes X_k$};
 \node (3) at (2,0) {$X_j \otimes X_k$};
 \node (4) at (2,-1) {$X_j \otimes X_k$};
 \node (5) at (0,-1) {$X_i$};
 \draw[->] (1) -- node[above]{$e_\alpha$} (2);
 \draw[->] (2) -- node[above]{$T_{X_j\otimes X_k}$} (3);
 \draw[->] (3) -- node[right]{$s_j s_k$} (4);
 \draw[->] (1) -- node[left]{$s_i$} (5);
 \draw[->] (5) -- node[below]{$e_\alpha$} (4);
\end{tz}
\end{equation}
commutes. By definition (see Corollary \ref{extend_cor}), $T_{X_j X_k} \circ e_\alpha = \epsilon^i_{jk, \alpha} e_\alpha$, so that \eqref{fusion_2} gives \eqref{fusion_hom_sets}, as well as \eqref{epsilon_tilde}.
\end{proof}
\begin{cor}[{cf. \cite[Prop 5.14]{eno02-ofc}}] The sphericalization $\tilde{\cat{C}}$ of a fusion category $\cat{C}$ carries a canonical spherical structure, in which the quantum dimension of a simple object $X^{s_i}_i$ in $\tilde{\cat{C}}$ computes in terms of the fusion dimension $d_i$ of $X_i$ in $\cat{C}$ as \label{spher_cor}
\begin{equation} \label{qdimtilde}
\dim (X^{s_i}_i) = s_i d_i \, .
\end{equation}
\end{cor}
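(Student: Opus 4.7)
The plan is to apply Theorem \ref{piv_structure_thm} to the sphericalization $\tilde{\cat{C}}$, constructing the spherical structure by hand from the pivotal symbols computed in the preceding lemma. Recall that the simple objects of $\tilde{\cat{C}}$ are pairs $X_i^{s_i}$ with $s_i \in \{\pm 1\}$, the fusion hom-space $\Hom_{\tilde{\cat{C}}}(X_i^{s_i}, X_j^{s_j} \otimes X_k^{s_k})$ is the full $s_i s_j s_k$-eigenspace of $T^i_{jk}$, and hence
\[
\tilde{T}^{(i,s_i)}_{(j,s_j)(k,s_k)} \;=\; s_i s_j s_k \cdot \id
\]
as an operator on that entire hom-space. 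In particular $\tilde{\cat{C}}$ is orientable.

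First I would define the candidate $\gamma : \id \Rightarrow \tilde{\mathcal{T}}$ on simple objects by
\[
\gamma_{(i,s_i)} := s_i \in \{\pm 1\} \subset U(1).
\]
Checking the pivotal equation \eqref{piveqn2} in $\tilde{\cat{C}}$ then reduces to the tautology $s_j s_k = s_i \cdot (s_i s_j s_k) = s_j s_k$, so by Theorem \ref{piv_structure_thm} this indeed defines a pivotal structure on $\tilde{\cat{C}}$. Since all $\gamma_{(i,s_i)}$ are $\pm 1$, the second half of Theorem \ref{piv_structure_thm} guarantees that $\gamma$ is spherical.

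Next I would compute quantum dimensions via Lemma \ref{dimension_simple} applied inside $\tilde{\cat{C}}$, which yields
\[
\dim_\gamma(X_i^{s_i}) \;=\; \gamma_{(i,s_i)} \, \tilde{d}_{(i,s_i)} \;=\; s_i \, \tilde{d}_{(i,s_i)},
\]
where $\tilde{d}_{(i,s_i)}$ is the fusion dimension of $X_i^{s_i}$ in $\tilde{\cat{C}}$. To match \eqref{qdimtilde} it remains to identify $\tilde{d}_{(i,s_i)}$ with the fusion dimension $d_i$ of $X_i$ in $\cat{C}$. For this I would first identify the right dual of $X_i^{s_i}$ in $\tilde{\cat{C}}$ as $X_{i^*}^{s_i}$: the unit of a duality lives in $\Hom_{\tilde{\cat{C}}}(1^{+1}, X_{i^*}^{s} \otimes X_i^{s_i}) = \Hom_{\cat{C}}(1, X_{i^*} \otimes X_i)_{s\,s_i}$, and by Lemma \ref{triv_pivs} we have $T^1_{i^* i} = \id$, so this space is nonzero exactly when $s = s_i$; similarly for the counit and the left dual pair. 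Then since the forgetful tensor functor $F : \tilde{\cat{C}} \to \cat{C}$ is strict monoidal, it sends any right/left duality data in $\tilde{\cat{C}}$ to right/left duality data on $X_i$ in $\cat{C}$, and the paired dimension in $\tilde{\cat{C}}$ maps to the paired dimension in $\cat{C}$ under the canonical identification $\End_{\tilde{\cat{C}}}(1^{+1}) = \mathbb{C} = \End_{\cat{C}}(1)$. Hence $\tilde{d}_{\{(i,s_i),(i^*,s_i)\}} = d_{\{i,i^*\}} = d_i^2$ and so $\tilde{d}_{(i,s_i)} = d_i$, proving \eqref{qdimtilde}.

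The main obstacle I anticipate is the last paragraph --- the duality bookkeeping needed to pull paired dimensions in $\tilde{\cat{C}}$ back to $\cat{C}$. Everything else is essentially formal manipulation with the $\mathbb{Z}/2\mathbb{Z}$-grading recorded by the previous lemma, but the dimension identification must use both triviality of the pivotal operators on $\Hom(1, X_i^* \otimes X_i)$ (Lemma \ref{triv_pivs}) and the monoidality of the forgetful functor, which is where the argument has real content.
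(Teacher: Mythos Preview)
Your proposal is correct and follows essentially the same route as the paper: use the preceding lemma to see that $\tilde{T}^{(i,s_i)}_{(j,s_j)(k,s_k)} = s_i s_j s_k \,\id$, set $\gamma_{(i,s_i)} = s_i$, and invoke Theorem \ref{piv_structure_thm}. The only difference is in the last step. The paper simply asserts that \eqref{qdimtilde} ``follows from the definition of the quantum dimension'' and postpones the identification $\tilde{d}_{(i,s_i)} = d_i$ to the subsequent Lemma \ref{sphericalization_dim}, where it is deduced from the fact that the associator in $\tilde{\cat{C}}$ is pulled back from $\cat{C}$ (hence the $a_i$ coefficients, and so the paired dimensions via Corollary \ref{paired_cor}, agree). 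Your argument via the strict monoidal forgetful functor carrying duality data to duality data is an equally valid and arguably cleaner way to see the same thing, and your use of Lemma \ref{triv_pivs} to pin down the dual of $X_i^{s_i}$ as $X_{i^*}^{s_i}$ is exactly right.
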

\begin{proof}
By \eqref{epsilon_tilde}, $\tilde{\cat{C}}$ is orientable. So from Theorem \ref{piv_structure_thm}, a spherical structure amounts to a choice of signs $t_{(i, s_i)} = \pm 1$ satisfying 
\[
t_{(j, s_j)} t_{(k, s_k)} = s_i s_j s_k t_{(i, s_i)}
\]
whenever $\Hom(X_i, X_j \otimes X_k)_{s_i s_j s_k}$ is nonzero. Clearly the choice $t_{(i, s_i)} = s_i$ provides a solution, and \eqref{qdimtilde} follows from the definition of the quantum dimension.
\end{proof}
\begin{example} The sphericalization of the Yang-Lee category has simple objects $1^+$, $1^-$, $\tau^+$, $\tau^-$. The fusion rules are $\tau^+ \tau^+ = 1^-$, $1^- 1^- = 1^+$, $\tau^{\pm} 1^- = 1^- \tau^{\pm} = \tau^{\mp}$, and $\dim (\tau^{\pm}) = \pm d_\tau = \pm (\phi - 1)$.
\end{example}
\begin{lem} The Frobenius-Perron and fusion dimensions in $\tilde{\cat{C}}$ are the same as in $\cat{C}$, that is, $d^+_{(X_i, s_i)} = d^+_{X_i}$ and $d_{(X_i, s_i)} = d_i$. \label{sphericalization_dim}
\end{lem}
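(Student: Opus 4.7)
The plan is to verify both identities separately, using the explicit description of the fusion rules \eqref{fusion_hom_sets} in $\tilde{\cat{C}}$ and the fact that the forgetful functor $F \colon \tilde{\cat{C}} \to \cat{C}$, sending $(X_i, s_i) \mapsto X_i$, is a faithful monoidal functor.

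For the Frobenius--Perron claim, I would propose the candidate values $d^+_{(X_i, s_i)} := d^+_{X_i}$ (independent of $s_i$) and check that they satisfy the defining equation \eqref{FPeqn} in $\tilde{\cat{C}}$, after which uniqueness of the positive solution closes the argument. Fixing $(j, s_j)$ and $(k, s_k)$ and summing over $(i, s_i)$ amounts to summing $s_i$ over $\{\pm 1\}$, which by \eqref{fusion_hom_sets} recombines the eigenspaces $\Hom_C(X_i, X_j \otimes X_k)_+$ and $\Hom_C(X_i, X_j \otimes X_k)_-$ into the full hom-space. Thus $\sum_{(i,s_i)} N_{\tilde{\cat{C}}}^{(i,s_i)}{}_{(j,s_j)(k,s_k)}\, d^+_{(i,s_i)}$ collapses to $\sum_i N^i_{jk} d^+_i = d^+_j d^+_k = d^+_{(j,s_j)} d^+_{(k,s_k)}$, as required.

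For the fusion dimension, I would first identify the right dual of $(X_i, s_i)$ in $\tilde{\cat{C}}$. Since $T^1_{i^* i} = \id$ by Lemma \ref{triv_pivs}, the one-dimensional space $\Hom_C(1, X_{i^*} \otimes X_i)$ lies entirely in the $+$ eigenspace, so by \eqref{fusion_hom_sets} $\Hom_{\tilde{\cat{C}}}(1^+, X_{i^*}^{s} \otimes X_i^{s_i})$ is nonzero precisely when $s = s_i$. Hence $(X_{i^*}, s_i)$ is the right dual of $(X_i, s_i)$; the symmetric argument using $T^1_{ii^*} = \id$ yields the left dual. The duality structure maps in $\tilde{\cat{C}}$ descend under $F$ to duality structure maps in $\cat{C}$, and the paired dimension is a scalar in $\End_{\tilde{\cat{C}}}(1^+) \cong \mathbb{C}$ computed as a composition of these, so it is preserved by $F$. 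This gives $d_{\{(i,s_i),(i^*,s_i)\}} = d_{\{i,i^*\}}$, and taking positive square roots yields $d_{(X_i,s_i)} = d_i$.

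Neither step presents a serious obstacle. The main conceptual content is simply that the parity decomposition of fusion hom-spaces aggregates back to the undecomposed hom-space when one sums over both signs, and that the monoidal forgetful functor $F$ transports unit-counit pairs to unit-counit pairs and therefore preserves paired dimensions. The technical linchpin in both arguments is the identity $T^1_{ii^*} = \id$ from Lemma \ref{triv_pivs}, which guarantees that the tensor unit $1^+$ and the dual pairs in $\tilde{\cat{C}}$ behave as expected under the forgetful functor.
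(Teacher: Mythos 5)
Your proof is correct. The Frobenius--Perron half is essentially identical to the paper's argument: you propose $d^+_{(X_i,s_i)} = d^+_{X_i}$, observe that summing \eqref{fusion_hom_sets} over $s_i = \pm 1$ recovers the full hom-space so that $\sum_{s_i} N^{(i,s_i)}_{(j,s_j)(k,s_k)} = N^i_{jk}$, and invoke uniqueness of the positive solution to \eqref{FPeqn} --- exactly as the paper does.

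For the fusion-dimension half you take a genuinely different route. The paper simply notes that the associator in $\tilde{\cat{C}}$ is the pullback of the associator in $\cat{C}$ and implicitly appeals to Corollary \ref{paired_cor}, which expresses $d_{\{i,i^*\}}$ purely through the associator coefficients $a_i$, $a_{i^*}$. You instead work directly from Definition \eqref{pair1}: you first identify the right dual of $X_i^{s_i}$ as $X_{i^*}^{s_i}$ using Lemma \ref{triv_pivs} (noting that $T^1_{i^*i}$ is the identity because $T^1_{j j^*}$ is, applied to $j = i^*$), and then argue that since the forgetful functor $F \colon \tilde{\cat{C}} \to \cat{C}$ is monoidal it transports unit-counit pairs to unit-counit pairs and preserves the scalar $\epsilon \circ n \cdot e \circ \eta$, hence the paired dimension. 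Your argument is longer but more self-contained and categorically transparent; the paper's argument is terser but depends on the explicit formula from Corollary \ref{paired_cor}. Both yield $d_{\{(X_i,s_i),(X_{i^*},s_i)\}} = d_{\{i,i^*\}}$, from which $d_{(X_i,s_i)} = d_i$ follows by taking positive square roots.
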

\begin{proof} We claim that setting $d^+_{(X_i, s_i)} = d^+_{X_i}$ solves \eqref{FPeqn} and hence is the unique solution, by the Frobenius-Perron theorem. Indeed,
\begin{align*}
 d^+_{(X_j, s_j)} d^+_{(X_k, s_k)} &= d^+_j d^+_k \\ 
  &= \sum_i N^i_{jk} d^+_i \\
  &= \sum_i \left[ (N^i_{jk})_{s_j s_k +} + (N^i_{jk})_{s_j s_k -}  \right] d^+_i \quad \text{for all $s_j,s_k = \pm 1$} \\
  &= \sum_{(i, s_i)} (N^i_{jk})^{s_j s_k s_i} d^+_{(X_i, s_i)} \, .
\end{align*}
For the fusion dimensions, the associator in $\tilde{\cat{C}}$ is the pullback of that in $\tilde{\cat{C}}$, so the paired dimensions are same as in $\cat{C}$, i.e. $d_{\{(X_i, s_i), (X_i^*, s_i)\}} = d_{\{i, i^*\}}$. 
\end{proof}

% Note to future Bruce: I removed this section at recommendation of referee.
\XXX{
\subsection{Algebraic integers}
Recall that a {\em modular category} is a ribbon fusion category whose $s$-matrix
\[
 s_{ij} = \ig{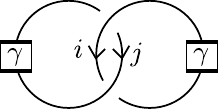}
\]
is invertible. (For consistency, we have used the pairing convention in the above diagram, so we must explicitly include the pivotal structure maps $\gamma$). The following results are well-known, with string diagram proofs given in \cite{bk01-ltc}. 
\begin{thm} In a modular category, the following identities hold. 
 \begin{itemize} 
 \item (Verlinde's formula) $\displaystyle \sum_k N^k_{ij} s_{kr} = \frac{s_{ir} s_{jr}}{s_{1r}} $.
 \item (Circumcision) $(s^2)_{ij} = D \, \delta_{ij^*}$.
 \end{itemize}
\end{thm}
In a modular category, the global dimension and the Frobenius-Perron dimension are closely related. 
\begin{lem} In a modular category, with Frobenius-Perron dimension $\Delta$ and global dimension $D$, there exists a simple object $X_r$ such that $D / \Delta =  \dim(X_r)^2$. \label{ratio_lem}
\end{lem}
\begin{proof} Follows from the Frobenius-Perron theorem, Verlinde's formula, and Circumcision, as in \cite[pg 622]{eno02-ofc}. 
\end{proof}

If $\cat{C}$ is a $\mathbb{C}$-linear spherical fusion category, then it is straightforward to check that $Z(\cat{C})$ is a $\mathbb{C}$-linear, braided, rigid, spherical abelian category whose monoidal unit is simple. It is not immediately clear that $Z(\cat{C})$ is a fusion category, that is, semisimple with finitely many simple objects, or that it is modular, that is, that the $s$-matrix is invertible.

\begin{thm} $Z(\cat{C})$ is a modular category. Moreover, $\Delta_{Z(\cat{C})} = \Delta_{\cat{C}}^2$ and $D_{Z(\cat{C})} = D_\cat{C}^2$.  \label{modular_fusion}
\end{thm}
Theorem \ref{modular_fusion} was first proved by M\"{u}ger \cite{muger2003-subfactors} and more recently in a different approach by Brugui\'{e}res and Virelizier \cite{bv12-qdhm, bv13-ocfc}. Both proofs are expressed entirely graphically in string diagrams. 

\begin{prop}[{cf.  \cite[Prop 8.22]{eno02-ofc}}] The ratio $D_{\cat{C}} / \Delta_{\cat{C}}$ of the global dimension of a fusion category $\cat{C}$ to its Frobenius-Perron dimension is an algebraic integer. \label{alg_int}
\end{prop}
\begin{proof}

 We can assume $\cat{C}$ is equipped with a spherical structure. If not, then pass to its sphericalization $\tilde{\cat{C}}$. By Lemma \ref{sphericalization_dim}, $D_{\tilde{\cat{C}}} = 2 D_{\cat{C}}$ and $\Delta_{\tilde{\cat{C}}} = 2 \Delta_{\cat{C}}$, so that their ratio is the same.

Now assume $\cat{C}$ is spherical. From Theorem \ref{modular_fusion}, 
\[
\frac{D_{Z(\cat{C})}}{\Delta_{Z(\cat{C})}}  =  \left( \frac{ D_{\cat{C}}}{\Delta_{\cat{C}}}\right)^2
\]
so it suffices to show that the left hand side is an algebraic integer, which follows immediately from Theorem \ref{modular_fusion} and Lemma \ref{ratio_lem}, since the quantum dimension of a simple object is an algebraic integer (since quantum dimensions furnish a homomorphism out of the Grothendieck ring, and hence are the eigenvalues of left-multiplication matrices).
\end{proof}
\begin{rem} Proposition \ref{alg_int} can be a powerful method to show that a given fusion category is pivotal. The idea is to use it to show that some other ratio is an actual {\em integer}, which places tight restrictions on the pivotal symbols. For instance, this method was used in \cite{t11-obngc, thorntonthesis} to show that all generalized near-group categories admit spherical structures.
\end{rem}
}

\section{Ocneanu rigidity} \label{ocneanu_sec}
In \cite[Appendix E6]{k05-aes} Kitaev gave a diagrammatic proof that the Davydov-Yetter cohomology \cite{yetter1998-braided, yetter2003-abelian, davydov1997-twisting} of a unitary fusion category vanishes in positive degrees. In this section we show that the pairing convention enables us to extend Kitaev's graphical proof to all fusion categories. 

\subsection{The tangent complex}
In this subsection we recall the definition of the Yetter-Davydov cohomology of a fusion category $\cat{C}$ \cite{yetter1998-braided, yetter2003-abelian, davydov1997-twisting}. Define the functor 
\[
T_n : \underbrace{\cat{C} \boxtimes \cdots \boxtimes \cat{C}}_{n \text{ times}} \rightarrow \cat{C}
\]
by $T_n(A_1, \ldots, A_n) = A_1 \otimes \cdots \otimes A_n$, parenthesized from left-to-right for definiteness, and define $C^n = \End(T_n)$, the vector space of natural endomorphisms of $T_n$. For $n=0$, set $C^0 = \mathbb{C}$. In terms of representative simple objects $X_i$, $i \in I$, we have
\[
C^n = \bigoplus_{i_1, \ldots, i_n} \Hom(X_1 \otimes \cdots \otimes X_n, \, X_1 \otimes \cdots \otimes X_n) \, .
\]
In string diagrams, we can write the components of $c \in C^n$ as
\[
\ig{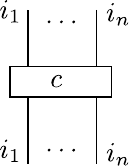} \, .
\]
For instance, a 1-cochain $c \in C^1$ is simply a collection of scalars $c_i \in \mathbb{C}$, $i \in I$. A 2-cochain $A \in C^2$ is a collection of morphisms $A_{ij} : X_i \otimes X_j \rightarrow X_i \otimes X_j$; this amounts to a collection of linear operators
\begin{equation} \label{2cocycle1}
A^i_{jk} : \Hom(X_k, X_i \otimes X_j) \rightarrow \Hom(X_k, X_i \otimes X_j) \, .
\end{equation}

\begin{defn} The {\em tangent complex} of a fusion category $\cat{C}$ is the sequence of $\mathbb{C}$-vector spaces and linear maps
\[
C^0 \stackrel{d^0}{\longrightarrow} C^1 \stackrel{d^1}{\longrightarrow} C^2 \stackrel{d^2}{\longrightarrow} C^3 \rightarrow \ldots, \qquad d^n = \sum_{k=0}^{n+1}(-1)^kf^n_k
\]
where the maps $f^n_k : C^n \rightarrow C^{n+1}$ are defined by
\[
\ig{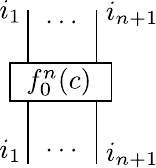} = \ig{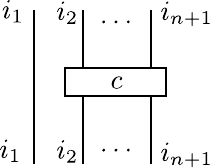}, \qquad \ig{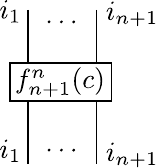} = \ig{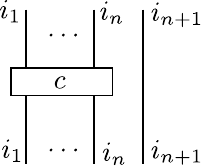} 
\]
\[
\ig{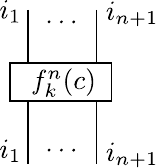} = \sum_{k, \alpha} \ig{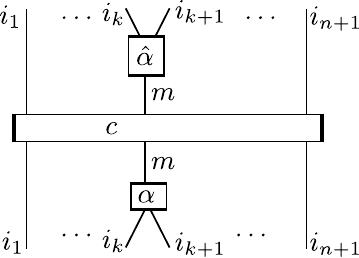}
\]
(Note that for $c \in C^0 = \mathbb{C}$, $(f^0_0(c))_i = \id_{X_i} = (f^0_1(c))_i$ so that $d^0 = 0$.)
\end{defn}
\begin{lem} The tangent complex is a complex, i.e. $d^{n+1} d^n = 0$. 
\end{lem}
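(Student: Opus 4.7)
My plan is to reduce the statement to the standard cosimplicial identities for the face maps and then verify those identities by direct diagrammatic manipulation.

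\textbf{Reduction to cosimplicial identities.} The first step is to establish the cosimplicial identities
\[
f^{n+1}_j \circ f^n_i \; = \; f^{n+1}_i \circ f^n_{j-1}, \qquad 0 \le i < j \le n+2.
\]
Once these are in hand, the computation
\[
d^{n+1} d^n \; = \; \sum_{i,j} (-1)^{i+j} \, f^{n+1}_j f^n_i
\]
splits into the ranges $i < j$ and $i \ge j$; applying the identity above to rewrite the $i < j$ terms as $f^{n+1}_i f^n_{j-1}$ and relabelling indices produces exactly the negatives of the $i \ge j$ terms, so everything cancels in pairs. This reduction is completely formal and independent of the specifics of the category.

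\textbf{Diagrammatic verification of the identities.} There are three flavours of face maps: the two \emph{boundary} maps $f^n_0$ and $f^n_{n+1}$, which respectively whisker an identity strand on the left or right of the diagram for $c$, and the $n$ \emph{interior} maps $f^n_k$ for $0 < k < n+1$, which insert the resolution of the identity \eqref{res_id} between strands $k$ and $k+1$ of $c$. I would verify the cosimplicial identity case-by-case, according to whether $i$ and $j$ are boundary or interior. The boundary–boundary and boundary–interior cases are immediate from the interchange law and naturality, because adding an identity strand on one side of the diagram commutes with any other operation performed elsewhere. The interior–interior case with $j > i+1$ is equally easy: two non-adjacent insertions of \eqref{res_id} manifestly commute by the interchange law. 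The remaining non-trivial case is $j = i+1$, where the two insertions land on overlapping strands.

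\textbf{Main obstacle: adjacent interior insertions.} The one point that needs genuine content is the identity $f^{n+1}_{i+1} f^n_i = f^{n+1}_i f^n_i$ (the collapsed version of the cosimplicial relation when $j=i+1$, reading off from the alternating sum). Diagrammatically, both sides compute a sum over simple objects of a diagram in which the three adjacent strands have been resolved via \eqref{res_id} into a single strand; the two sides correspond to the two parenthesisations $(X_{i_k}\otimes X_{i_{k+1}})\otimes X_{i_{k+2}}$ versus $X_{i_k}\otimes(X_{i_{k+1}}\otimes X_{i_{k+2}})$ of the intermediate object. That these two sums agree is precisely the statement that the resolution of the identity is natural under associators: expanding one parenthesisation in terms of the other via the associator matrix elements \eqref{associator_eqn} and using dual-basis orthogonality \eqref{dual_basis} collapses one sum to the other. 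In string diagrams, after substituting definitions, both sides become the same sum of trivalent diagrams, related by a rigid isotopy together with an associator expansion, which is all that Theorem \ref{invariance_rigid} allows us to invoke. This is the one place where the monoidal structure of $\cat{C}$ enters; everything else is strand bookkeeping.
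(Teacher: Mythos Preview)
Your proposal is correct and follows essentially the same route as the paper: both reduce $d^{n+1}d^n=0$ to the cosimplicial identities (the paper writes them as $f^{n+1}_k f^n_m = f^{n+1}_{m+1} f^n_k$ for $k\le m$, which is your identity after the substitution $i=k$, $j=m+1$), and both isolate the adjacent-interior case as the only nontrivial one, verifying it by precomposing with a trivalent basis vector and using the associator expansion~\eqref{associator_eqn}. One small wording issue: the extremal cases $i=0,\,j=1$ and $i=n+1,\,j=n+2$ are not literally covered by ``adding a strand commutes with operations elsewhere'' since the operations overlap --- the paper handles these by invoking the resolution of the identity~\eqref{res_id} directly --- but this is a cosmetic point and the argument goes through.
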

\begin{proof}
Follows from the identity 
\begin{equation} \label{chain_eqn}
f^{n+1}_k f^n_m = f^{n+1}_{m+1} f^n_k \quad 0 \leq k \leq m \leq n+1 \, . 
\end{equation}
For $k=m=0$ or $k=m=n+1$, \eqref{chain_eqn} is just the resolution of the identity \eqref{res_id}. For $k \neq m$, \eqref{chain_eqn} is just the interchange law in a monoidal category. In the other cases, \eqref{chain_eqn} takes the following form (we do the case $k=m=1$, $n=2$):
\begin{equation} \label{chain2}
\sum_{m, \alpha, p, \beta} \ig{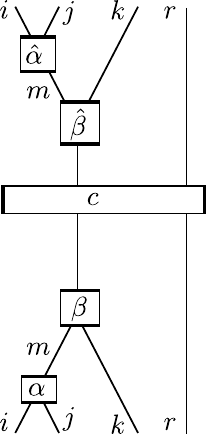} = \sum_{n, \delta, q, \gamma} \ig{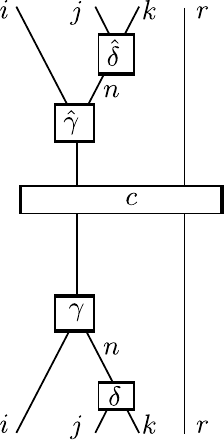}
\end{equation}
Pre-composing both sides of \eqref{chain2} with
\[
\ig{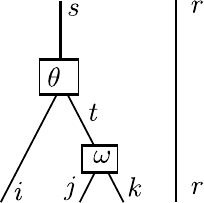}
\]
and inserting the associator expansion \eqref{associator_eqn}, the left and right sides become the same expression, namely
\[
\sum_{m, \alpha, \beta} (F^s_{ijk})^{\beta m \alpha}_{\theta t \omega} \ig{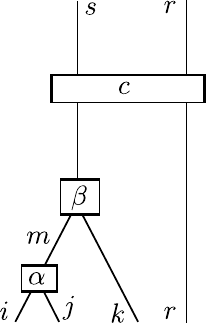} \, .
\]
\end{proof}
\begin{example} \label{2-cocycle-example}A 1-cocycle $c \in Z^1(C)$ is a collection of numbers $c_i$ such that $c_k = c_i + c_j$ whenever $X_k$ is a summand of $X_i \otimes X_j$. A 2-cocycle $A \in Z^2(C)$ is a collection of linear operators $A^i_{jk}$ as in \eqref{2cocycle1} such that 
\[
\ig{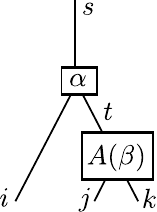} - \ig{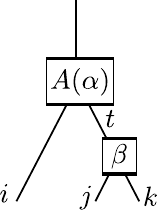} + \sum_{\gamma, m, \delta} (F^s_{ijk})^{\gamma m \delta}_{\alpha t \beta} \left[ \ig{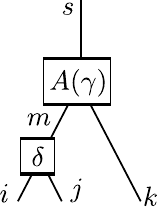} - \ig{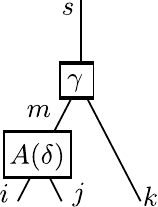} \right] = 0.
\]
Expanded out fully, this is the equation
\begin{equation} \label{2cocycle2}
\sum_\alpha (A^t_{jk})_{\alpha \omega} F^{\gamma' m' \delta'}_{\theta t \alpha} - \sum_\beta (A^s_{it})_{\beta \theta} F^{\gamma' m' \delta'}_{\beta t \omega} + \sum_\gamma (A^s_{m' k})_{\gamma' \gamma} F^{\gamma m' \delta'}_{\theta t \omega} - \sum_\delta (A^m_{ij})_{\delta' \delta} F^{\gamma' m' \delta}_{\theta t \omega}
\end{equation}
for all values of the free indices, where $F\equiv F^s_{ijk}$.
\end{example}

\begin{defn} The {\em Yetter-Davydov} cohomology $H^i(\cat{C})$ of a fusion category $\cat{C}$ is the cohomology of its tangent complex.
\end{defn}
The low-dimensional cohomology groups have the following interpretations \cite{yetter1998-braided, yetter2003-abelian, davydov1997-twisting, k05-aes}. The group $H^1(\cat{C})$ classifies first-order deformations of $\Aut_\otimes (\id_{\cat{C}})$, $H^2(\cat{C})$ classifies first-order deformations of the tensor product functor, and $H^3(\cat{C})$ classifies first-order deformations of the associator.

\subsection{Vanishing of cohomology}
We will need the following `handleslide' identity.
\begin{lem} In the pairing convention for the graphical calculus associated to a fusion category, we have
\[
\sum_{\alpha, p}  d_p \, \ig{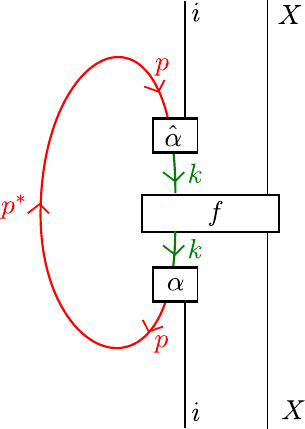} = \sum_k d_k \ig{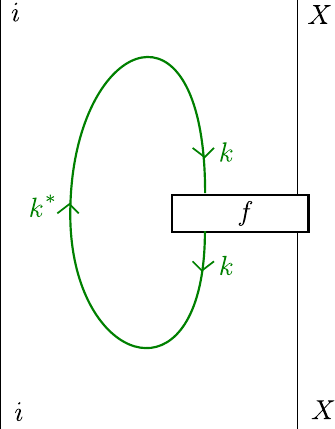}
\]
where $d_p$ are the fusion dimensions, $X$ is an arbitrary object, and $f: X_k \otimes X \rightarrow X_k \otimes X$ is an arbitrary morphism. \label{handleslide} 
\end{lem}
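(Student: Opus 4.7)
The identity has the form of a Kirby--Lyubashenko handleslide: a weighted sum $\sum_p d_p X_p$ of simple objects, assembled with a trivalent basis $\alpha$ into a loop that interacts with the strand $X$, collapses to the diagonal $\sum_k d_k f$ on $X_k \otimes X$. Since no pivotal structure is available, the entire argument must be carried out in the pairing convention, and the key ingredients will be the resolution of identity \eqref{res_id}, the flip lemma \ref{flip_lem}, and Proposition \ref{homprop} relating $d_j d_k$ to traces of pivotal operators. The overall strategy is to rewrite both sides in a common normal form, indexed by trivalent bases $\Hom(X_k, X_p \otimes X)$, and then collapse the sum over $p$ via the ``twisted homomorphism'' identity for fusion dimensions.

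First, I would apply the resolution of identity \eqref{res_id} to the $X_p \otimes X$ region on the left, introducing an auxiliary simple object $X_k$ and a basis $e_\alpha \colon X_k \to X_p \otimes X$ with its dual $\hat{e}_\alpha$. After this expansion each summand decomposes into two parts: a copy of $f$ acting on the $X_k \otimes X$ strand, and a closed diagrammatic factor containing the $X_p$-loop together with the trivalent vertices $e_\alpha$, $\hat{e}_\alpha$. The index $\alpha$ appearing in the statement is then precisely this trivalent basis index.

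Next, I would use parts (a) and (c) of Lemma \ref{flip_lem}, together with the pairing convention and the scalar-extraction formula \eqref{loop_scalar}, to reorganize the closed part of each summand into a bubble whose evaluation, by Lemma \ref{discomb}, is a matrix element (and, after summing over $\alpha$, a trace) of an appropriate pivotal operator $T^{\bullet}_{\bullet \bullet}$ indexed by $p, p^*, k$ (up to cyclic rearrangement). Lemma \ref{Tsymprop} will be needed to reshuffle this trace into the form $\Tr(T^\bullet_{jk})$ required by Proposition \ref{homprop}.

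Finally, I would apply Proposition \ref{homprop}, i.e. $d_j d_k = \sum_i \Tr(T^i_{jk}) d_i$, to evaluate the sum $\sum_p d_p \Tr(T^{\bullet}_{\bullet\bullet})$ and identify the resulting coefficient of $f$ on $X_k \otimes X$ as exactly $d_k$. The main obstacle I anticipate is bookkeeping: correctly identifying, after the flip, \emph{which} trace of pivotal operators appears and \emph{which} cyclic rearrangement from Lemma \ref{Tsymprop} must be invoked so that Proposition \ref{homprop} applies with the right indices. Once this matching is in place, the identity reduces to the twisted homomorphism property and the proof is essentially forced.
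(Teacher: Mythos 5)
Your route is genuinely different from the paper's. The paper does not invoke the pivotal operators, Proposition~\ref{homprop}, or Lemma~\ref{discomb} at all in this proof: it performs an explicit \emph{change of trivalent basis}. Starting from the vertices $e_\alpha$ appearing in the left-hand side, it defines new vectors $f_\alpha$ and $\hat f_\alpha$ by yanking a cup and cap across the vertex and rescaling by $d_p/d_k$, verifies (by closing a loop and invoking the pairing convention) that $\{f_\alpha\}$ and $\{\hat f_\alpha\}$ are again a dual pair, and then observes that the resulting $\sum_{p,\alpha} f_\alpha \hat f_\alpha$ is a resolution of the identity \eqref{res_id}. The $X_p$-loop simply disappears and the overall coefficient $d_k$ falls out of the rescaling. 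This is a low-level argument using only semisimplicity and the pairing convention; it never needs to evaluate any closed scalar.

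The main gap in your sketch is the factorization step: you assert that, after inserting a resolution of identity, each summand ``decomposes into two parts: a copy of $f$ acting on the $X_k \otimes X$ strand, and a closed diagrammatic factor.'' That separation is exactly the nontrivial content of the lemma, and it does not come for free: $f$ is an arbitrary morphism on $X_k \otimes X$ with $X$ a general (non-simple) object, and the $X_p$-loop together with the trivalent vertices is attached to the $X_k$ strand on which $f$ lives, so naively cutting the diagram would produce $\hat e_\alpha \circ (\cdot) \circ e_\alpha$ sandwiching $f$ rather than an unaffected copy of $f$ times a scalar. The paper sidesteps this precisely by \emph{not} trying to peel off a scalar --- instead the cup/cap transport converts the attached loop structure into a fresh dual pair $(f_\alpha, \hat f_\alpha)$ whose resolution-of-identity property makes the loop drop off. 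If you insist on your route, you would need to first justify this separation (e.g.\ by showing the $X_p$-loop only couples to the $X_k$ strand and can be isotoped clear of the $X$-legs using Lemma~\ref{flip_lem}), and only then does the remaining scalar become a theta-style quantity you could feed into Lemma~\ref{discomb}, Lemma~\ref{Tsymprop}, and Proposition~\ref{homprop}. Even then, Proposition~\ref{homprop} would yield a product $d_j d_k$ of fusion dimensions rather than a single $d_k$, so you would additionally need to track where the extra $1/d_j$ comes from (it comes from the pairing-convention normalization of the closed $X_p$-loop). In short: the approach is salvageable in principle but is considerably heavier than the paper's basis-change argument and leaves the decisive decomposition step unjustified as written.
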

\begin{proof}
\begin{equation} \label{snakey_2}
\text{LHS } = \sum_{p, \alpha} d_p \ig{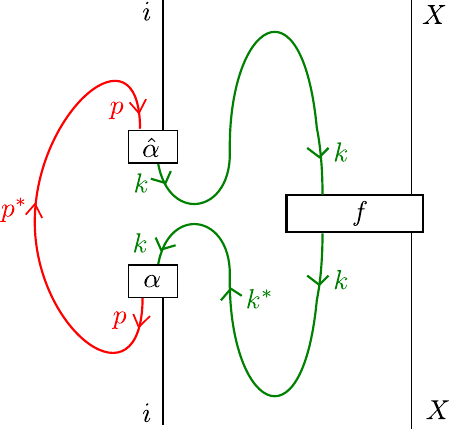} = d_k \sum_{p, \alpha} \ig{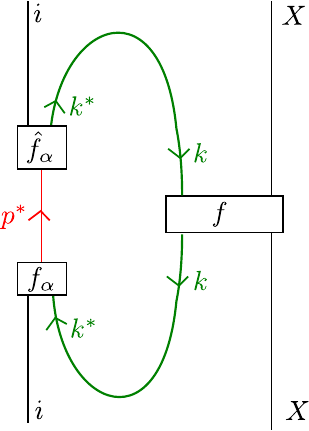}
\end{equation}
where in the first equality the pairing convention is being used, and in the second equality we transformed the old basis vectors $e_\alpha$ and $\hat{e}_\alpha$ (which were written just as $\alpha$ and $\hat{\alpha}$ in the diagrams) into new basis vectors $f_\alpha \in \Hom(X_{p^*}, X_i \otimes X_{k^*})$ and $\hat{f}_\beta \in \Hom(X_i \otimes X_{k^*}, X_{p^*})$ by the formulas
\begin{equation} \label{snakey_dual}
\ig{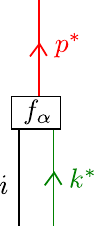} := \ig{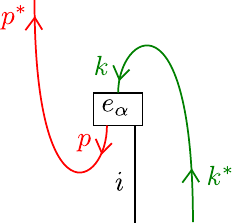} \qquad \ig{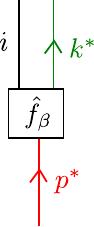} := \frac{d_p}{d_k} \ig{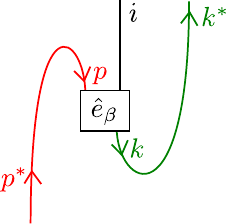}
\end{equation}
where the cup and cap maps are the same ones used in \eqref{snakey_2}. Now we claim that $f_\alpha$ and $\hat{f}_\beta$ form a dual basis. Indeed,
\[
\ig{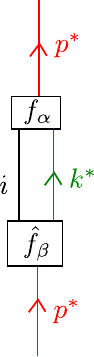} \,\,= \frac{d_p}{d_k} \ig{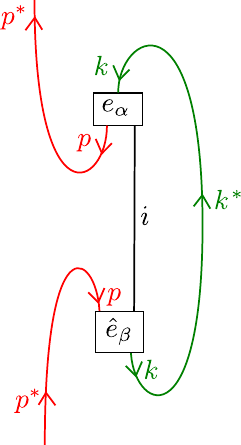} = \delta_{\alpha \beta} \ig{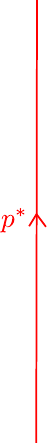}
\]
as can be seen by closing the loop on the left and applying the definition of the pairing convention. Hence the $f_\alpha \hat{f}_\alpha$ term in the right hand side of \eqref{snakey_2} provides a resolution of the identity, proving the lemma.
\end{proof}

\begin{thm}[{cf. \cite[Thm 2.27]{eno02-ofc}}] $H^n(\cat{C}) = 0$ for all $n > 0$. \label{ocneanu}
\end{thm}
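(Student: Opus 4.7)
The plan is to follow the strategy of Kitaev (Appendix E6 of \cite{k05-aes}) but with the pairing convention replacing the unitary structure throughout, so that the argument applies to an arbitrary fusion category over $\mathbb{C}$. Concretely, I would construct an explicit contracting homotopy $h^n : C^n \to C^{n-1}$ and check that $d^{n-1} h^n + h^{n+1} d^n = \id_{C^n}$ for every $n \geq 1$, which immediately forces $H^n(\cat{C}) = 0$.

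The homotopy is defined by quantum-tracing out the rightmost tensor factor, weighted by fusion dimensions and normalised by the global dimension. For $c \in C^n$ with components $c_{i_1,\ldots,i_n} : X_{i_1}\otimes\cdots\otimes X_{i_n} \to X_{i_1}\otimes\cdots\otimes X_{i_n}$, set
\[
 h^n(c)_{i_1,\ldots,i_{n-1}} \; = \; \frac{1}{D_\cat{C}} \sum_{k\in I} d_k \cdot \bigl(\text{close the last $X_k$-strand of } c_{i_1,\ldots,i_{n-1},k}\bigr),
\]
where the closure uses the pairing convention of Section \ref{pairing_sec}. Because the paired dimensions are positive real (Theorem \ref{realpositive}), the factor $1/D_\cat{C}$ is nonzero and the map is well defined on arbitrary fusion categories, not just unitary ones.

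Next I would expand $d^{n-1} h^n(c) + h^{n+1} d^n(c)$ as a signed sum of compositions of the $f^{\bullet}_k$ with the trace operation. Most of the terms come in matched pairs: whenever an insertion $f^n_k$ happens to the left of the rightmost strand (i.e.\ $k \leq n-1$), the interchange law yields $h^{n+1} f^n_k = f^{n-1}_k h^n$, producing cancellations exactly as in the proof that $d^{n+1}d^n = 0$. The remaining contributions are the three "interacting" terms $h^{n+1} f^n_n$, $h^{n+1} f^n_{n+1}$ and $f^{n-1}_n h^n$. Here is where the handleslide identity of Lemma \ref{handleslide} enters: it lets us slide the fusion-dimension-weighted bubble produced by $h^{n+1} f^n_{n+1}$ past the morphism $c$, and combined with the resolution of the identity \eqref{res_id} and the snake equations \eqref{snake_eqns}, the three residual terms collapse to $\id_{C^n}(c)$.

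The main obstacle will be the bookkeeping: organising the signs, parenthesisations and associator insertions so that the interchange-law cancellations really do pair up term-by-term and the handleslide output really does match the normalisation $1/D_\cat{C}$. The genuinely new content compared to \cite{k05-aes} is conceptual rather than combinatorial, namely that Lemma \ref{handleslide} in the pairing convention plays the role that unitarity played in Kitaev's original proof; once that is in place, the graphical calculation goes through identically. The case $n=0$ is excluded as usual because $d^0 = 0$.
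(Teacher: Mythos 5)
Your plan is essentially the paper's proof: a contracting homotopy obtained by closing one strand against the fusion-dimension-weighted loop normalised by $1/D_\cat{C}$, with the handleslide identity (Lemma \ref{handleslide}) doing the real work and the pairing convention replacing Kitaev's appeal to unitarity. The paper closes the \emph{leftmost} strand (calling the homotopy $\chi^n$), which gives a clean shifted commutation $\chi^{n+1}f^n_0 = \id$ and $\chi^{n+1}f^n_k = f^{n-1}_{k-1}\chi^n$ for $k\geq 1$, the $k=1$ case being the handleslide; your rightmost-trace version works by mirror symmetry.

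Two concrete slips worth flagging, both of the bookkeeping type you anticipated. First, the handleslide is needed for the term $h^{n+1}f^n_n$ (a trivalent vertex on the rightmost strand which is then traced), \emph{not} for $h^{n+1}f^n_{n+1}$: the latter just produces a bare $X_p$-loop contributing $d_p$, and collapses to $\id$ directly from the defining relation $D_\cat{C} = \sum_p d_p^2$, with no sliding involved. Second, because removing the rightmost strand does not reindex the remaining positions, the commutation reads $h^{n+1}f^n_k = f^{n-1}_k h^n$ for $0\leq k\leq n$ with \emph{no} index shift (the paper's shift $k\mapsto k-1$ is a consequence of removing the leftmost strand), so the alternating sum assembles to $h^{n+1}d^n - d^{n-1}h^n = (-1)^{n+1}\id$ rather than the formula $d^{n-1}h^n + h^{n+1}d^n=\id$ you wrote. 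This still kills cohomology in positive degrees, or you can recover the standard identity by replacing $h^n$ with $(-1)^n h^n$. Neither slip affects the soundness of the strategy.
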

\begin{proof} We define an operator $\chi^n : C^n \rightarrow C^{n-1}$ by 
\[
\ig{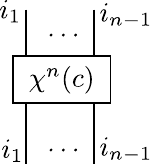} = \frac{1}{D} \sum_p d_p \ig{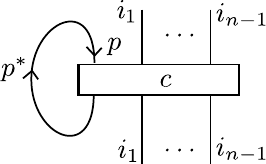}
\]
where on the right hand side we are using the pairing convention (else this diagram would not be well-defined), $d_p$ is the fusion dimension of $X_p$, and $D$ is the global dimension of $\cat{C}$. We will show that
\begin{equation} \label{chi_eqn}
\chi^{n+1} f^n_k =  \begin{cases} \id_{C^n} & \text{ if } k=0 \\ f^{n-1}_{k-1} \chi^n & \text{ if } k,n > 0 \end{cases}
\end{equation}
which will imply that $d\chi + \chi d = \id$, and hence the cohomology vanishes. The case $k=0$ of \eqref{chi_eqn} is simply the relation $D = \sum_p d_p^2$, which is the definition of $D$. The cases $k > 1$ are tautologies. The nontrivial case is $k=1$, which is precisely the handleslide identity from Lemma \ref{handleslide}.
\end{proof}
\begin{example} For a 2-cocycle $A \in Z^2(C)$, that is, a collection of linear operators $A^i_{jk}$ satisfying \eqref{2cocycle2},the vanishing of $H^2(\cat{C})$ means that $A^i_{jk}$ must take the form
\begin{equation} \label{forced2}
 A^i_{jk} = (c_j - c_i + c_k) \id
\end{equation}
for some scalars $c_i, c_j, c_k$. This conclusion is perhaps not immediately evident from \eqref{2cocycle2}. Even if we assume that the $A^i_{jk}$ in \eqref{2cocycle2} are diagonal (which is also not immediately clear from $d A = 0$) i.e. $(A^i_{jk})_{\alpha \beta} = \delta_{\alpha \beta} \, a^i_{jk, \alpha}$, then \eqref{2cocycle2} becomes the equation
\begin{equation} \label{forcing_associators}
a^t_{jk, \omega} - a^s_{it, \theta} + a^s_{mk, \gamma} - a^m_{ij, \delta} = 0 \quad \text{whenever } (F^s_{ijk})^{\theta t \omega}_{\delta m \gamma} \neq 0.
\end{equation}
The vanishing of $H^2(\cat{C})$ means that the equations \eqref{forcing_associators} must force the $a^i_{jk}, \alpha$ to all be equal for different $\alpha$ and furthermore take the form \eqref{forced2}. Thus Ocneanu rigidity implies in particular that sufficiently many associators must be nonzero.
\end{example}

\bibliographystyle{plainurl}
\bibliography{references}

\Addresses

\end{document}